\newtheorem{theorem}{Theorem}[section]
\newtheorem{lemma}[theorem]{Lemma}
\newtheorem{proposition}[theorem]{Proposition}
\newtheorem{corollary}[theorem]{Corollary}
\theoremstyle{definition}
\newtheorem{definition}[theorem]{Definition}
\newtheorem{example}[theorem]{Example}
\theoremstyle{remark}
\newtheorem{remark}[theorem]{Remark}
\numberwithin{equation}{section}
\def\EE{{\mathscr E}}
\def\FF{{\mathscr F}}
\def\rc{{\mathrm{c}}}
\def\rd{{\mathrm{d}}}
\def\rr{{\mathrm{r}}}
\def\rl{{\mathrm{l}}}
\def\rp{{\mathrm{p}}}
\def\rn{{\mathrm{n}}}
\def\fm{{\mathfrak{m}}}
\def\tI{{\mathtt{I}}}
\def\fs{{\mathfrak{s}}}
\def\fI{{\mathfrak{I}}}
\def\fa{{\mathfrak{a}}}
\def\fb{{\mathfrak{b}}}
\def\bR{{\mathbb{R}}}
\def\bZ{{\mathbb{Z}}}
\def\bN{{\mathbb{N}}}
\def\bQ{{\mathbb{Q}}}
\def\fJ{{\mathfrak{J}}}
\def\fe{{\mathfrak{e}}}
\def\ft{{\mathfrak{t}}}
\def\tJ{{\mathtt{J}}}
\def\fc{{\mathfrak{c}}}
\def\fd{{\mathfrak{d}}}
\def\bfP{{\mathbf{P}}}
\begin{document}

\title[On general skew Brownian motions]{On general skew Brownian motions}

\author{ Liping Li}
\address{RCSDS, HCMS, Academy of Mathematics and Systems Science, Chinese Academy of Sciences, Beijing 100190, China.}
\email{liliping@amss.ac.cn}
\thanks{The first named author is partially supported by NSFC (No. 11688101 and 11801546) and Key Laboratory of Random Complex Structures and Data Science, Academy of Mathematics and Systems Science, Chinese Academy of  Sciences (No. 2008DP173182).}


\subjclass[2010]{Primary 31C25, 60J60.}



\keywords{Dirichlet forms, Distorted Brownian motions, Fukushima's decomposition, Pathwise uniqueness, Local times.}

\begin{abstract}
The aim of this paper is two-fold. On one hand, we will study the distorted Brownian motion on $\bR$, i.e. the diffusion process $X$ associated with a regular and strongly local Dirichlet form obtained by the closure of $\EE(f,g)=\frac{1}{2}\int_\bR f'(x)g'(x)\rho(x)dx$ for $f,g\in C_c^\infty(\bR)$ on $L^2(\bR, \fm)$, where $\fm(dx)=\rho(x)dx$ and $\rho$ is a certain positive function. After figuring out the irreducible decomposition of $X$, we will present a characterization of that $X$ becomes a semi-martingale by virtue of so-called Fukushima's decomposition. Meanwhile, it is also called a general skew Brownian motion, which turns out to be a weak solution to the stochastic differential equation with certain $\mu$:
\begin{equation}\label{EQYTW}
	dY_t=dW_t+\int_\bR\mu(dz) dL^z_t(Y),
\end{equation}
where $(W_t)_{t\geq 0}$ is a standard Brownian motion and $(L^z_t(Y))_{t\geq 0}$ is the symmetric semi-martingale local time of the unknown semi-martingale $Y$ at $z$. On the other hand, the stochastic differential equation \eqref{EQYTW} will be considered further. The main purpose is to find the conditions on $\mu$ equivalent to that there exist general skew Brownian motions being weak solution to \eqref{EQYTW}. Moreover, the irreducibility and the equivalence in distribution of expected general skew Brownian motions will be characterized. Finally, several special cases will be paid particular attention to and we will prove or disprove the pathwise uniqueness for \eqref{EQYTW}.
\end{abstract}

\maketitle

\tableofcontents

\section{Introduction}

Consider the stochastic differential equation (SDE in abbreviation) of an unknown semi-martingale $(Y_t)_{t\geq 0}$:
\begin{equation}\label{EQ1YTW}
\left\lbrace
\begin{aligned}
	&dY_t=dW_t+\int_\bR \mu(dz) dL^z_t(Y),\\
	&Y_0=x \in \mathbb{R},
\end{aligned}\right.
\end{equation}
where $(W_t)_{t\geq 0}$ is a standard Brownian motion, $\mu=\mu^+-\mu^-$ is the difference of two positive measures on $\bR$ with $\mu^+\perp \mu^-$ and $(L^z_t(Y))_{t\geq 0}$ is the symmetric semi-martingale local time of $Y$ at $z$. Precisely speaking, 
\begin{equation}\label{EQ1LTY}
	L^z_t(Y)=\lim_{\varepsilon\downarrow 0} \frac{1}{2\varepsilon}\int_0^t 1_{(z-\varepsilon, z+\varepsilon)}(Y_s)d\langle Y\rangle_s,\quad \text{a.s.,} 
\end{equation}
where $\langle Y\rangle$ is the quadratic variation process of $Y$ (see \cite[Chapter VI \S1]{RY99}). Note that $t\rightarrow L^z_t(Y)$ is a.s. increasing and $dL^z_t(Y)$ in \eqref{EQ1YTW} means the differential of $L^z_t(Y)$ in $t$. A weak solution to \eqref{EQ1YTW} is a pair $(Y,W)$ on a certain probability space, where $Y$ is a continuous semi-martingale and $W$ is a standard Brownian motion, such that for all $t\geq 0$, $z\mapsto L^z_t(Y)$ is a.s. $|\mu|$-integral where $|\mu|:=\mu^++\mu^-$ and
\[
	Y_t-x=W_t+\int_\bR L^z_t(Y)\mu^+(dz)-\int_\bR L^z_t(Y)\mu^-(dz).
\]
We say the pathwise uniqueness holds for \eqref{EQ1YTW} if two weak solutions $(Y^1,W)$ and $(Y^2,W)$ with the same Brownian motion $W$ must coincide, i.e. $Y^1=Y^2$. The well-posedness of \eqref{EQ1YTW} means the existence of its weak solutions and that the pathwise uniqueness holds. 

A motivated example is $\mu=(2\alpha-1)\delta_0$ for a constant $\alpha\in (0,1)$. With this $\mu$, \eqref{EQ1YTW} is well posed and its solution is the so-called $\alpha$-skew Brownian motion (see \cite{HS81}). This process behaves like a Brownian motion except for the sign of each excursion is chosen by using an independent Bernoulli random variable of the parameter $\alpha$. Clearly, the symmetric case $\alpha=1/2$ coincides with Brownian motion. For $\alpha\neq 1/2$, the non-martingale part in \eqref{EQ1YTW} makes sense and the support $\{0\}$ of $(2\alpha-1)\delta_0$ is crucial, since $\alpha$-skew Brownian motion is ``skew" only at $0$. 
Generally in a celebrated work \cite{LG84}, Le Gall proves that the following assumption on $\mu$ is sufficient for the well-posedness of \eqref{EQ1YTW}: $|\mu|$ is finite and $|\mu(\{z\})|<1$ for any $z\in \bR$. Then Bass and Chen \cite{BC05} extend this result to the cases allowing $|\mu(\{z\})|=1$. Special situations are paid particular attention to by other researchers. For example, Ramirez \cite{Ra11} considers the case $\mu=\sum_{p\in \bZ}(2\alpha_p-1)\delta_{z_p}$, where $\alpha_p\in (0,1)$ and $\{z_p: p\in \bZ\}\subset \bR$ has no accumulation points. The associated diffusion as the unique weak solution to \eqref{EQ1YTW} is called a multi-skewed Brownian motion therein, and its infinitesimal generator and the conditions for its recurrence and positive recurrence are explored. In \cite{ORT15}, the set $\{z_p:p\in \bZ\}$ is replaced by another one with exactly one accumulation point, i.e. $\{z_p:p\in \bZ\}=\{0, l_k, r_k: k\in \bZ\}$ and 
\[
	\mu=\sum_{k\in \bZ}\left((2\alpha^-_k-1)\delta_{l_k}+(2\alpha^+_k-1)\delta_{r_k}\right)+(2\alpha_0-1)\delta_0,
	\]
where $\alpha^-_k,\alpha^+_k, \alpha_0\in (0,1)$, and $(l_k)_{k\in \mathbb{Z}}$ and $(r_k)_{k\in \mathbb{Z}}$ are two sequences of real numbers such that 
\begin{equation}\label{EQ1LKL}
\begin{aligned}
&l_k<l_{k+1}<0<r_k<r_{k+1}, \quad (\forall k\in \mathbb{Z}), \\
&\lim_{k\rightarrow \infty}l_k=0=\lim_{k\rightarrow -\infty}r_k,\quad \lim_{k\rightarrow -\infty}l_k=-\infty, \lim_{k\rightarrow \infty}r_k=\infty. 
\end{aligned}\end{equation}
It is proved that under a local version of Le Gall's condition: 
\[
	\sum_{k=1}^\infty|2\alpha^-_k-1|+\sum_{k=-\infty}^1|2\alpha^+_k-1|<\infty,
\] 
the SDE \eqref{EQ1YTW} is well posed. Then its unique solution is called a countably skewed Brownian motion in \cite{ORT15}. The properties of countably skewed Brownian motion like the non-explosion, recurrence and positive recurrence are further studied in \cite{ORT15}. Some other papers concerning \eqref{EQ1YTW} are \cite{Os82, Ta87} and the references therein.

In several works mentioned above such as \cite{Ra11, ORT15, Os82}, the theory of Dirichlet forms has been applied with some success to construct weak solutions to \eqref{EQ1YTW}. Let us use a few lines to explain some details. A Dirichlet form is a symmetric Markovian closed form on an $L^2(E,\fm)$ space, where $E$ is a nice topological space and $\fm$ is a fully supported Radon measure on it. Dirichlet forms are closely linked with Markov processes because of their Markovian property. Due to a series of important works by Fukushima, Silverstein in 1970's and Albeverio, Ma and R\"ockner in 1990's, it is now well known that a regular or quasi-regular Dirichlet form is always associated with a symmetric Markov process. We refer the notions and terminologies in the theory of Dirichlet forms to \cite{CF12} and \cite{FOT11}. In our case,  let $\fm(dx)=\rho(x)dx$ be a fully supported positive Radon measure on $\mathbb{R}$. This is meant to assume 
\begin{equation}\label{EQ2RLR}
\rho\in L^1_\mathrm{loc}(\mathbb{R}) \text{ and } \int_U \rho(x)dx>0
\end{equation}
for any non-empty open set $U$. Consider a quadratic form on $L^2(\mathbb{R},\fm)$:
\begin{equation}\label{EQ2EFG}
\mathscr{E}(f,g):=\frac{1}{2}\int_\mathbb{R}f'(x)g'(x)\rho(x)dx,\quad f,g\in C_c^\infty(\mathbb{R}). 
\end{equation} 
We need to point out except for \eqref{EQ2RLR}, another condition is necessary for the closability of \eqref{EQ2EFG}. That is, $\rho$ should vanish a.e. on its singular set (see \cite{H75} and \cite[Theorem~3.1.6]{FOT11})
\[
S(\rho):=\left\{x\in \mathbb{R}: \text{for any }\varepsilon>0, \int_{x-\varepsilon}^{x+\varepsilon}\frac{1}{\rho(y)}dy=\infty \right\}. 
\]
Under these two conditions, denote the closure of \eqref{EQ2EFG} by $(\EE,\FF)$. Clearly, it is a regular and strongly local Dirichlet form on $L^2(\mathbb{R}, \fm)$ and thus induces a diffusion process $X=\{(X_t)_{t\geq 0}, (\mathbf{P}_x)_{x\in \bR}\}$ on $\mathbb{R}$, which is usually called a \emph{distorted Brownian motion} (see \cite{Al77}). The reason that \eqref{EQ2EFG} interests us is that $X$ is possibly a semi-martingale and its martingale part is equivalent to a Brownian motion. For example, if $\rho$ is good enough (such as a continuously differential function with $1/c<\rho<c$ for some constant $c>1$), then $X$ is a semi-martingale under $\mathbf{P}_x$ for any $x\in \bR$ and
\begin{equation}\label{EQ1XTXB}
	X_t-x=B_t+\int_0^t\frac{\rho'}{2\rho}(X_s)ds= B_t+\int_{z\in \bR}L^z_t(X)\frac{\rho'(z)}{2\rho(z)}dz,
\end{equation}
where $B_t$ is a certain Brownian motion and $L^z_t(X)$ is the symmetric semi-martingale local time of $X$ at $z$. The second equality in \eqref{EQ1XTXB} is due to the occupation times formula (see \cite[Chapter VI. (1.6)]{RY99}). As a result, $(\EE,\FF)$ is linked with a weak solution to \eqref{EQ1YTW} with $\mu(dz)=\frac{\rho'(z)}{2\rho(z)}dz$. It is also possible to take an uncountinuous density function $\rho$. In \cite{ORT15} (as well as \cite{Ra11}), the unique solution to \eqref{EQ1YTW} is associated with the Dirichlet form $(\EE,\FF)$ where $\rho$ is a step function of the form 
\begin{equation}\label{EQ1RXN}
	\rho(x)=\sum_{n\geq 1} \gamma_n 1_{(a_n, b_n)}(x), 
\end{equation}
$\{\gamma_n: n\geq 1\}$ is a set of positive numbers determined by $\{\alpha^-_k, \alpha^+_k: k\in \bZ\}$ and $\{(a_n, b_n): n\geq 1\}$ is a set of disjoint open intervals such that $\cup_{n\geq 1}(a_n,b_n)=\bR\setminus \{z_p: p\in \bZ\}$.
Note incidentally that $\alpha$-skew Brownian motion corresponds to 
\[
	\rho(x)=\frac{1-\alpha}{\alpha}1_{(-\infty, 0)}(x)+1_{(0,\infty)}(x)
\]
for $\alpha\in (0,1)$.

The aim of this paper is twofold. On one hand, we will characterize when the distorted Brownian motion $X$ is a semi-martingale and derive its representation. It is in the hope that the expression \eqref{EQ1XTXB} can be extended to distorted Brownian motions with density functions in a family as wide as possible and particularly, the special situations mentioned above can be covered by a more general framework. On the other hand, we wish to find a general set of measures, with $\mu$ in which the SDE \eqref{EQ1YTW} has weak solutions associated with certain Dirichlet forms. It is also of interest to prove or disprove the pathwise uniqueness for \eqref{EQ1YTW}.


To study the distorted Brownian motion, the basic tool is the well-known Fukushima's decomposition. Note that $u(x):=x$ belongs to the local Dirichlet space $\FF_\text{loc}$ and then we can write the Fukushima's decomposition of $X$ relative to $u$:
\begin{equation}\label{EQ1XTX}
	X_t-X_0=M^u_t+N^u_t,
\end{equation}
where $M^u$ is a martingale additive functional (MAF in abbreviation) and $N^u$ is a continuous additive functional (CAF in abbreviation) locally of zero energy (see \cite[\S5.2]{FOT11}). One can easily deduce that $M^u$ is equivalent to a standard Brownian motion (see Proposition~\ref{PRO2}). So the challenge is to characterize $N^u$ and to figure out the connections between $N^u$ and symmetric semi-martingale local times. Related considerations to formulate $N^u$ are presented in \cite{Fu79, Fu99} as well as the references therein and collected in the book \cite[\S5]{FOT11}. It turns out that $N^u$ is of bounded variation, if and only if a certain smooth signed measure $\nu$ exists with $\EE(u,g)=\langle \nu, g\rangle$ for any function $g$ in a certain family. As a result,
\begin{equation}\label{EQ1NUT}
	N^u_t=-\int_\bR \ell^z_t \nu(dz),
\end{equation}
where $(\ell^z_t)_{t\geq 0}$ is the positive continuous additive functional (PCAF in abbreviation) of $\delta_z$ in their Revuz correspondence relative to $(\EE,\FF)$, i.e.  for any $f\in C_c(\bR)$, 
\[
	\delta_z(f)=\lim_{t\downarrow 0} \frac{1}{t}\int_\bR \fm(dx) \cdot \mathbf{E}_x \int_0^t f(X_s)d\ell^z_s. 
\]
There are sufficient conditions like $\rho$ is locally of bounded variation (see \cite{Fu99}) that can lead to the existence of $\nu$. But we are not satisfied with them. In \S\ref{SEC4}, one of the main results Theorem~\ref{THM3} will state an equivalent condition based on the irreducible decomposition of $X$. More precisely, $X$ is not necessarily irreducible, as shown in \S\ref{SEC21}. This means a proper subset $A$ of $\bR$ may exist such that $X$ will never leave $A$ if starts from a point in $A$. This set $A$ is called an invariant set of $X$ and the restriction $X|_A$ of $X$ to $A$ is also a Markov process (see \cite[\S2.1]{CF12}). By virtue of a representation theorem obtained in \cite{LY172}, $X$ can be characterized by a set of so-called effective intervals $\{(\tI_k, \fs_k): k\geq 1\}$, where $\{\tI_k: k\geq 1\}$ is a set of disjoint intervals and $\fs_k$ is an ``adapted" scale function on $\tI_k$. Each $\tI_k$ is an invariant set of $X$ and its restriction $X|_{\tI_k}$ to $\tI_k$ is an irreducible diffusion determined by $\fs_k$. After figuring out the expression of these effective intervals in \S\ref{SEC3}, we will conclude in Theorem~\ref{THM3} that $N^u$ is of bounded variation, if and only if for any $k\geq 1$, the restriction $\rho|_{\tI_k}$ of $\rho$ to $\tI_k$ is locally of bounded variation. Moreover, $-\nu|_{\tI_k}$ where $\nu$ is in \eqref{EQ1NUT} is the induced measure (on $\tI_k$) of $\rho|_{\tI_k}$. To link $N^u$ with symmetric semi-martingale local times, it will turn out in Lemma~\ref{LM6} that 
\begin{equation}\label{EQ1LZT}
	L^z_t(X)=\frac{\rho(z)+\rho(z-)}{2}\cdot \ell^z_t,
\end{equation}
where $\rho(z-)$ is the left limit of $\rho$ at $z$ (see Definition~\ref{DEF1}). This formula extends a result in \cite{ORT15}. As a consequence, one can find that $X$ is a weak solution to \eqref{EQ1YTW} with 
\begin{equation}\label{EQ1MZN}
	\mu(dz)=-\frac{\nu(dz)}{\rho(z)+\rho(z-)}.
\end{equation}
When $\rho$ satisfies the condition stated above, we shall call $X$ a \emph{general skew Brownian motion} (with the density function $\rho$) in Definition~\ref{DEF3}. 

As an outgrowth of the study concerning distorted Brownian motions, we will reconsider the SDE \eqref{EQ1YTW} and the main purpose is to find suitable conditions on $\mu$ such that there exist general skew Brownian motions being weak solutions to \eqref{EQ1YTW}. Since it suffices to construct a density function $\rho$ satisfying \eqref{EQ1MZN} for the given $\mu$, first of all, we must impose $|\mu(\{z\})|\leq 1$. Denote
\[
	G:=\{z\in \bR: \exists \varepsilon>0, |\mu|((z-\varepsilon,z+\varepsilon))<\infty\}. 
\]	
Clearly, $G$ is open and may be written as a union of disjoint open intervals: 
\[
	G=\cup_{n\geq 1}I_n=\cup_{n\geq 1}(a_n,b_n).
\]
Then a complete characterization of the existence of related skew Brownian motions will be phrased in Theorem~\ref{THM58}. It is shown that $\Xi^+:=\{z: \mu(\{z\})=1\}$ (resp. $\Xi^-:=\{z: \mu(\{z\})=-1\}$) has to be a subset of $\{a_n: n\geq 1\}$ (resp. $\{b_n: n\geq 1\}$) and one must further impose $|\mu|(G^c\setminus \Xi)=0$ where $\Xi:=\Xi^+\cup \Xi^-$. Particularly, $|\mu|$ is Radon on each $I_n$ with $|\mu(\{z\})|<1$ for any $z\in I_n$. Hence restricting to $I_n$, the density function $\rho$ is determined by $\mu$ uniquely up to a multiplicative constant in a classical manner \eqref{EQ5VIZ} (see also \cite[Lemma~2.1]{LG84}). The next crucial step is to obtain the effective intervals by gluing all so-called scale-connected intervals in $\{I_n:n\geq 1\}$. This will be accomplished in \S\ref{SEC6}. Since every condition in this characterization is both sufficient and necessary, the theory of Dirichlet forms has done its best to attain weak solutions to \eqref{EQ1YTW}. 

We declare general skew Brownian motions related to \eqref{EQ1YTW} to be unique if all of them are equivalent in distribution. When this uniqueness fails, there are obviously different weak solutions to \eqref{EQ1YTW} and occasionally, infinite ones can be found. An interesting example will be raised in Corollary~\ref{COR510}, where $G^c$ is a generalized Cantor set and $\mu=\sum_{n\geq 1}\delta_{a_n}-\sum_{n\geq 1}\delta_{b_n}$. It is also worth noting that this kind of uniqueness holds, if and only if every effective interval is ended by $a_n$ and $b_n$ for some $n$ (see Corollary~\ref{COR7}). More precisely, the set of effective intervals must be $\{\langle a_n, b_n\rangle: n\geq 1\}$, where $\langle a_n,b_n\rangle$ may be open, semi-open/semi-closed or closed. Moreover, $|\mu|$ is Radon on $\langle a_n,b_n\rangle$ and the restriction $X|_{\langle a_n,b_n\rangle}$ of $X$ to $\langle a_n,b_n\rangle$ is uniquely determined by $\mu$.  As explained in Remark~\ref{RM74}, there is a sense in which this uniqueness leads to the pathwise uniqueness of \eqref{EQ1YTW} by attaching a suitable lifetime to the weak solutions.

Three special cases of Theorem~\ref{THM58} will be explored further. The first one is $G^c=\emptyset$. In other words, $|\mu|$ is Radon on $\bR$ and $|\mu(\{z\})|<1$ for any $z\in \bR$. On one hand, we will indicate that there exists a unique general skew Brownian motion related to \eqref{EQ1YTW}. On the other hand, \eqref{EQ1YTW} is well posed as proved in Theorem~\ref{THM71} and hence its unique weak solution coincides with the general skew Brownian motion obtained above. It is worth noting that the situations appeared in \cite{LG84}, \cite{ORT15} and \cite{Ra11} are covered by this case. The second case assumes that $G^c=\Xi$ is a discrete set of countable points. A point $z \in \Xi^+$ (resp. $z\in \Xi^-$) is usually called a right (resp. left) \emph{barrier} as it is indicated in \cite{BE14} that when $|\mu|$ is Radon on $\bR$, the solution to \eqref{EQ1YTW} for $x\geq z$ (resp. $x\leq z$) cannot pass through $z$. However in our case, $|\mu|$ is not necessarily Radon on $\bR$ and the ``barriers" in $\Xi$ may play different roles. In practice, we will classify every point in $\Xi$ as a \emph{real, pseudo} or \emph{nonsensical barrier} in \S\ref{SEC53}. These names  come from the following facts stated in Theorem~\ref{THM7}: The presence of nonsensical barriers breaks the existence of general skew Brownian motions related to \eqref{EQ1YTW}; pseudo barriers are not ``real" because every related general skew Brownian motion can pass through them from both sides; and only real barriers are effective like the case $|\mu|$ is Radon on $\bR$. Finally, the third case involves a Cantor-type structure, i.e. $G^c$ is assumed to be a generalized Cantor set $K$. Without loss of generality, we further assume $\Xi^+=\{a_n: a_n>-\infty, n\geq 1\}$ and $\Xi^-=\{b_n: b_n<\infty, n\geq 1\}$. It is the case that every point in $G^c$ is an accumulation point of $\Xi$. The characterization of the existence of related general skew Brownian motion will be simplified in Theorem~\ref{THM8}. Particularly, when $K$ is produced by a sequence $\{\alpha_j: j\geq 1\}$ of numbers in $(0,1)$ with $\alpha_j\equiv \alpha\in (0,1)$ ($\alpha=1/3$ corresponds to the standard Cantor set), $\alpha< 1/4$ leads to the irreducibility of all related general skew Brownian motions and for $\alpha\geq 1/4$, they are unique. 

The rest of this paper is organized as follows. In the sections from \S\ref{SEC3} to \S\ref{SEC5}, we will study the distorted Brownian motion $X$ associated with the Dirichlet form $(\EE,\FF)$ given by the closure of \eqref{EQ2EFG}. The basic properties and the expression of the effective intervals of $X$ will be presented in \S\ref{SEC3}. The section \S\ref{SEC4} is mainly devoted to prove an equivalent condition of that $N^u$ in the Fukushima's decomposition \eqref{EQ1XTX} is of bounded variation. Particularly, the semi-martingale representation of $X$ will be obtained under the same condition. In \S\ref{SEC34}, the relation between $\ell^z$ and $L^z(X)$ will be figured out. As a result, we can conclude in \S\ref{SEC5} that the so-called general skew Brownian motion is a weak solution to \eqref{EQ1YTW} with $\mu$ in \eqref{EQ1MZN}. Note that we will review a representation theorem for regular and strongly local Dirichlet forms on $L^2(\bR,\fm)$ in the appendix \ref{SEC2} and their quasi notions like $\EE$-nest, $\EE$-polar set, $\EE$-quasi-continuous function and smooth measure will be characterized in \ref{SEC22}. These characterizations play an important role in proving the results mentioned above. The remainder sections are devoted to the exploration of the SDE \eqref{EQ1YTW}. After preparing a useful lemma in Lemma~\ref{LM58}, the main result characterizing the existence of related general skew Brownian motions will be stated in Theorem~\ref{THM58}. Moreover, Corollaries~\ref{COR67} and \ref{COR614} describe their irreducibility and equivalence in distribution. The three special cases mentioned above will be treated in \S\ref{SEC7}.  We should point out that the appearing processes in this part are assumed to be conservative, and the conditions \eqref{EQ5LXV2}, \eqref{EQ5XVX}, \eqref{EQ5LXV} and \eqref{EQ5XXX} are used only for guaranteeing this assumption in various situations.

\subsection*{Notations}
Let us put some often used notations here for handy reference, though we may restate their definitions when they appear.

The notation `$:=$' is read as `to be defined as'. For $\fa<\fb$, $\tI:=\langle \fa, \fb\rangle$ is an interval where $\fa$ or $\fb$ may or may not be contained in $\langle \fa, \fb\rangle$. The classes $C_c(\tI), C^1_c(\tI)$ and $C^\infty_c(\tI)$ denote the spaces of all continuous functions on $\tI$ with compact support, all continuously differentiable functions with compact support and all infinitely differentiable functions with compact support, respectively.
The restrictions of a measure $\mu$ and a function $f$ to an interval $\tI$ are denoted by $\mu|_\tI$ and $f|_\tI$ respectively. The notation $\langle \mu, f\rangle$ stands for the integration of $f$ with respect to $\mu$. 
 For a scale function $\fs$ (i.e. a continuous and strictly increasing function) on  $\tI$, $d\fs$ represents its associated measure on $\tI$. Given a scale function $\fs$ on $\tI$ and another function $f$ on $\tI$, $f\ll \fs$ means $f=g\circ \fs$ for an absolutely continuous function $g$ and $$\frac{df}{d\fs}:=g'\circ \fs,$$
where $g'$ is the derivative of $g$. The notation $\delta_z$ stands for the Dirac measure at $z\in \bR$. Given a difference of two positive measures $\mu=\mu^+-\mu^-$ with $\mu^+\perp \mu^-$, we say $\mu$ is Radon signed on $\tI$ if for any compact subinterval $K$ of $\tI$, $\mu|_K:=\mu^+|_K-\mu^-|_K$ is a finite signed measure on $K$. Set $|\mu|:=\mu^++\mu^-$.

Fix a Markov process $X=\left\{(X_t)_{t\geq 0}, (\mathbf{P}_x)_{x\in E}, \zeta\right\}$ associated with a Dirichlet form $(\EE,\FF)$ on $L^2(E,\fm)$, where $\mathbf{P}_x$ is the probability measure such that $\mathbf{P}_x(X_0=x)=1$ and $\zeta$ is the lifetime of $X$. The notation $\mathbf{E}_x$ is the expectation induced by $\mathbf{P}_x$. We shall write $(X_t, \mathbf{P}_x)$ for this Markov process if no confusions cause. However when $x$ is specific, $(X_t,\mathbf{P}_x)$ also stands for the stochastic process $(X_t)_{t\geq 0}$ under the probability measure $\mathbf{P}_x$. Let $A$ be an invariant set of $X$, then $X|_A$ stands for its restriction to $A$. 
All terminologies about Dirichlet forms and Markov processes are standard and we refer them to \cite{FOT11, CF12}.


\section{Distorted Brownian motions}\label{SEC3}

What we are concerned with is a special family of regular and strongly local Dirichlet forms whose associated diffusions are the so-called distorted Brownian motions. 
Take a positive function $\rho$ on $\bR$ satisfying the assumption:
\begin{itemize}
\item[(A)] \eqref{EQ2RLR} holds and $\rho$ vanishes a.e. on its singular set
\[
S(\rho)=\left\{x\in \mathbb{R}: \text{for any }\varepsilon>0, \int_{x-\varepsilon}^{x+\varepsilon}\frac{1}{\rho(y)}dy=\infty \right\}. 
\]
\end{itemize}
Set $\fm(dx):=\rho(x)dx$ henceforth. Then the quadratic form
\begin{equation}\label{EQ2DEC}
\begin{aligned}
	\mathcal{D}(\EE)&=C_c^\infty(\mathbb{R}),\\
	\EE(f,g)&=\frac{1}{2}\int_\mathbb{R}f'(x)g'(x)\rho(x)dx,\quad f,g\in \mathcal{D}(\EE)
\end{aligned}
\end{equation}
is closable on $L^2(\bR,\fm)$ by \cite[Theorem~3.1.6]{FOT11}. Denote its closure by $(\EE,\FF)$, which is clearly a regular and strongly local Dirichlet form on $L^2(\bR,\fm)$, and the associated diffusion process of $(\EE,\FF)$ by $(X_t,\mathbf{P}_x)$. This process is the so-called distorted Brownian motion. 

The distorted Brownian motion can be represented by a set of so-called effective intervals $\{(\tI_k,\fs_k): k\geq 1\}$ as reviewed in Theorem~\ref{THM0} (see also \cite[\S3.4]{LY172}). Roughly speaking, $\tI_k=\langle \fa_k, \fb_k\rangle$ are mutually disjoint intervals and $\fs_k$ is a continuous and strictly increasing function on $\tI_k$; each $\tI_k$ is an invariant set of $X$ and the restriction $X|_{\tI_k}$ of $X$ to $\tI_k$ is an irreducible diffusion with the scale function $\fs_k$. Hereafter, we always take a fixed point $\fe_k\in (\fa_k,\fb_k)$ and impose $\fs_k(\fe_k)=0$. Particularly, we can summarize the following results.

\begin{lemma}{(see \cite[\S3.4]{LY172})}\label{LM32}
Let $(\EE,\FF)$ be the closure of \eqref{EQ2DEC} and $\{(\tI_k,\fs_k): k\geq 1\}$ the set of its effective intervals. Then
\begin{itemize}
\item[(1)] $S(\rho)= \left(\cup_{k\geq 1} \mathring{\tI}_k\right)^c$ where $\mathring{\tI}_k=(\fa_k,\fb_k)$ is the interior of $\tI_k$ and particularly $S(\rho)$ is nowhere dense;
\item[(2)] $\fs_k$ is absolutely continuous and its derivative $\fs'_k>0$ a.e. on $\tI_k$ for any $k\geq 1$; 
\item[(3)] $\rho=1/\fs'_k$ a.e. on $\tI_k$ for any $k\geq 1$;
\item[(4)] $X$ is irreducible, if and only if $1/\rho\in L^1_\mathrm{loc}(\bR)$. 
\end{itemize}
\end{lemma}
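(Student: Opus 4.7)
The plan is to invoke the representation theorem for regular strongly local Dirichlet forms on $L^2(\bR,\fm)$ recalled in the appendix. That theorem supplies, for any such form, a canonical set $\{(\tI_k,\fs_k):k\geq 1\}$ of effective intervals and adapted scale functions together with an explicit expression
\[
\EE(f,g)=\tfrac12\sum_{k\geq 1}\int_{\tI_k}\frac{df}{d\fs_k}\frac{dg}{d\fs_k}\,d\fs_k
\]
for $f,g$ in a suitable dense subspace. I would first apply this to the closure $(\EE,\FF)$ of \eqref{EQ2DEC} and then match the two explicit expressions for $\EE$ — one coming from the density $\rho$ and one coming from the scale functions $\fs_k$ — to read off (1)–(3).

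For (2) and (3), I would take $f,g\in C_c^\infty(\mathring{\tI}_k)$ and write the identity
\[
\tfrac12\int_{\mathring{\tI}_k}f'(x)g'(x)\rho(x)\,dx=\tfrac12\int_{\mathring{\tI}_k}\frac{df}{d\fs_k}\frac{dg}{d\fs_k}\,d\fs_k.
\]
A standard polarization/localization argument (letting $f$ run over approximate indicators on a small interval and integrating by parts against $\fs_k$) then forces $d\fs_k$ to be absolutely continuous with density $1/\rho$ on $\mathring{\tI}_k$, so $\fs_k$ is absolutely continuous with $\fs'_k=1/\rho$ a.e.\ on $\mathring{\tI}_k$. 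Since $\fs_k$ is a scale function, hence continuous and strictly increasing on the whole $\tI_k$, extending to any endpoint contained in $\tI_k$ preserves absolute continuity and yields $\fs'_k>0$ a.e. The positivity of $\fs'_k$ transfers to $\rho$ on $\mathring{\tI}_k$ via the identity $\rho=1/\fs'_k$, and combined with assumption (A) (i.e.\ $\rho=0$ a.e.\ on $S(\rho)$) this determines $\rho$ on all of $\bR$ up to a null set.

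For (1), note that on each $\mathring{\tI}_k$ we have just shown $1/\rho=\fs'_k$ a.e.; since $\fs_k$ is AC on compact subintervals of $\mathring{\tI}_k$, its derivative is locally integrable, so $\mathring{\tI}_k\subset S(\rho)^c$. Conversely, if $x\notin S(\rho)$, then $1/\rho$ is integrable on a neighborhood of $x$, and one can build a local scale function there; by the uniqueness portion of the representation theorem this local scale must coincide (up to an affine change) with some $\fs_k$, placing $x$ in some $\mathring{\tI}_k$. Hence $S(\rho)^c=\cup_k\mathring{\tI}_k$. Nowhere-density then follows because the $\tI_k$ are maximal effective intervals and so $S(\rho)$ contains no interior point. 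Finally, (4) is immediate from (1): $X$ is irreducible iff there is a single effective interval equal to $\bR$, iff $S(\rho)=\emptyset$, iff $1/\rho\in L^1_{\mathrm{loc}}(\bR)$.

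The only delicate step is the matching argument at the start of (2)–(3): one must check that the localized core $C_c^\infty(\mathring{\tI}_k)$ is dense enough in the part of $\FF$ supported by $\tI_k$ to make the two integral expressions for $\EE$ agree pointwise in $x$, and that $\fs_k$ extends continuously across any endpoint of $\tI_k$ that it contains. This is a standard but careful application of the quasi-regularity machinery from the appendix; everything else is bookkeeping against assumption (A).
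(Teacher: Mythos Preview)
The paper does not actually prove this lemma; it cites \cite[\S3.4]{LY172} and then in Remark~\ref{RM2} gives the explicit \emph{construction}: write the open set $S(\rho)^c$ as $\cup_{k\geq 1}(\fa_k,\fb_k)$, define $\fs_k(x):=\int_{\fe_k}^x 1/\rho(z)\,dz$ on each component, adjoin endpoints according to finiteness of $\fs_k$, and then verify that this data satisfies the hypotheses of Theorem~\ref{THM0}. Items (1)--(4) are then immediate from the construction. Your approach runs in the opposite direction: you start from the abstract $(\tI_k,\fs_k)$ supplied by Theorem~\ref{THM0} and try to \emph{identify} them by matching the two expressions for $\EE$. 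Both routes are legitimate, and your matching argument for (2)--(3) is fine.

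The one place your argument is genuinely thin is the converse inclusion in (1). You write: ``if $x\notin S(\rho)$ \dots\ by the uniqueness portion of the representation theorem this local scale must coincide (up to an affine change) with some $\fs_k$, placing $x$ in some $\mathring{\tI}_k$.'' But uniqueness in Theorem~\ref{THM0} is a statement about the \emph{global} decomposition, not a local rigidity statement; it does not directly rule out the possibility that $x$ lies in $(\cup_k\tI_k)^c$ or on a shared endpoint while still having $1/\rho$ locally integrable there. To make this step rigorous you would have to build a full candidate set of effective intervals containing a neighbourhood of $x$ and then invoke uniqueness on the whole representation --- which is exactly the constructive route of Remark~\ref{RM2}. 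So the cleanest fix is to adopt the paper's order: construct $(\tI_k,\fs_k)$ from $S(\rho)^c$ and $1/\rho$ first, check it represents $(\EE,\FF)$ (this is where the closability assumption (A) enters), and read off (1)--(4) from the construction rather than from a matching argument.
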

\begin{remark}\label{RM2}
Let us explain the details to obtain $\{(\tI_k,\fs_k): k\geq 1\}$. Note that $S(\rho)$ is closed and hence $S(\rho)^c$ can be written as a union of disjoint open intervals: $S(\rho)^c=\cup_{k\geq 1}(\fa_k,\fb_k)$. Take a fixed point $\fe_k\in (\fa_k,\fb_k)$ and set $\fs_k(x):=\int_{\fe_k}^x\frac{1}{\rho(z)}dz$. Further let $\tI_k:=\langle \fa_k,\fb_k\rangle$, where the finite endpoint $\fa_k\in \tI_k$ (resp. $\fb_k\in \tI_k$) if and only if $\fs_k(\fa_k):=\lim_{x\downarrow \fa_k}\fs_k(x)>-\infty$ (resp. $\fs_k(\fb_k):=\lim_{x\uparrow \fb_k}\fs_k(x)<\infty$). Finally, we obtain the set of effective intervals $\{(\tI_k,\fs_k): k\geq 1\}$. 
\end{remark}

It is worth noting that $\left(\cup_{k\geq 1}\tI_k\right)^c$ is of zero $\fm$-measure since $\rho=0$ a.e. on $S(\rho)$. As a consequence, $\left(\cup_{k\geq 1}\tI_k\right)^c$ is $\fm$-polar relative to $(\EE,\FF)$ by Corollary~\ref{COR1}. However every single point in $\cup_{k\geq 1}\tI_k$ is of positive capacity. Moreover, when $\tI_k$ is bounded (i.e. $|\fa_k|+|\fb_k|<\infty$), the restriction of $X$ to $\tI_k$ is recurrent. Hence the explosion of $X$ is possibly happened only at the infinite endpoints of some effective interval. Particularly, the conservativeness of $X$ is characterized in Proposition~\ref{THM4}. 

\section{Semi-martingale representation}\label{SEC4}

The main purpose of this section is to study when the distorted Brownian motion associated with $(\EE,\FF)$ becomes a semi-martingale. To this end, consider the coordinate function $u(x)=x$ for any $x\in \bR$. Note that $u\in \FF_\mathrm{loc}$. Then the Fukushima's decomposition of $X$ relative to $u$ is written as follows: for q.e. $x\in \mathbb{R}$ and $\mathbf{P}_x\text{-a.s.}$, 
\begin{equation}\label{EQ4UTX}
	X_t-X_0=M^u_t+N^u_t,\quad 0\leq t<\zeta,
\end{equation}
where $M^u=(M^u_t)_{t\geq 0}$ is an MAF locally of finite energy, $N^u=(N^u_t)_{t\geq 0}$ is a CAF locally of zero energy, and $\zeta$ is the lifetime of $X$ (see \cite[\S5.5]{FOT11}). 

\subsection{Martingale additive functional}

Denote the energy measure of $M^u$, i.e. the Revuz measure of the predictable quadratic variation $\langle M^u\rangle$ of $M^u$ by $\mu_{\langle M^u\rangle}$ (see \cite{FOT11}).  

\begin{lemma}
It holds that
\begin{equation}\label{EQ4MMU}
	\mu_{\langle M^u\rangle}= 1_{\{\cup_{k\geq 1}\tI_k\}}\cdot \fm.
\end{equation}
\end{lemma}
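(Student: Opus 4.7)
The plan is to compute $\mu_{\langle M^u\rangle}$ by approximating the coordinate function $u(x)=x$ with compactly supported smooth functions for which the energy measure is given by an integration-by-parts formula, and then to transfer the result to $u$ via the local nature of Fukushima's decomposition.

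First I would recall the standard characterization: for $f\in \FF$ bounded, the energy measure of $M^f$ satisfies
\[
\langle \mu_{\langle M^f\rangle}, g\rangle = 2\EE(f, fg)-\EE(f^2, g)
\]
for $g\in \FF\cap C_c(\bR)$ (see \cite[\S3.2]{FOT11}). When $f\in C_c^\infty(\bR)$, substituting into \eqref{EQ2DEC} and expanding the derivatives, the two cross terms $\frac{1}{2}\int ff'g'\rho\,dx$ cancel and one is left with
\[
2\EE(f,fg)-\EE(f^2,g)=\int_\bR (f'(x))^2 g(x)\rho(x)dx,
\]
so $d\mu_{\langle M^f\rangle}=(f')^2\rho\, dx$ on $\bR$.

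Next I would pass from compactly supported smooth functions to $u\in \FF_\mathrm{loc}$ by localization. Fix $n\geq 1$ and choose $\phi_n\in C_c^\infty(\bR)$ with $\phi_n(x)=x$ on $[-n,n]$. Since $u-\phi_n$ vanishes on $(-n,n)$, the local Fukushima decomposition (see \cite[\S5.5]{FOT11}) gives $M^u_t=M^{\phi_n}_t$ on $[0,\tau_n)$, where $\tau_n$ is the exit time of $X$ from $(-n,n)$; consequently the Revuz measures of $\langle M^u\rangle$ and $\langle M^{\phi_n}\rangle$ agree on $(-n,n)$. For $g\in C_c^\infty(\bR)$ with $\mathrm{supp}(g)\subset (-n,n)$, $\phi_n'\equiv 1$ on $\mathrm{supp}(g)$, and the formula of the previous paragraph yields
\[
\langle \mu_{\langle M^u\rangle}, g\rangle =\int_\bR (\phi_n'(x))^2 g(x)\rho(x)\,dx=\int_\bR g(x)\rho(x)\,dx=\langle \fm, g\rangle.
\]
Letting $n\to\infty$ and varying $g$ gives $\mu_{\langle M^u\rangle}=\fm$ as measures on $\bR$.

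Finally, I would insert the indicator using assumption (A) together with Lemma~\ref{LM32}(1): $\rho$ vanishes almost everywhere on $S(\rho)=\left(\cup_{k\geq 1}\tI_k\right)^c$, so $\fm\left(\left(\cup_{k\geq 1}\tI_k\right)^c\right)=0$, whence $\fm=1_{\{\cup_{k\geq 1}\tI_k\}}\cdot \fm$ and \eqref{EQ4MMU} follows. The only step that requires genuine care is the identification of $M^u$ with $M^{\phi_n}$ up to $\tau_n$; every other step is either a direct calculation using \eqref{EQ2DEC} or a direct appeal to facts already established in the excerpt.
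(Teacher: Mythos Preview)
Your proof is correct and follows essentially the same approach as the paper: localize $u$ by $\phi_n\in C_c^\infty(\bR)$ agreeing with $u$ on $(-n,n)$, compute $\mu_{\langle M^{\phi_n}\rangle}$ via the identity $\langle\mu_{\langle M^f\rangle},g\rangle=2\EE(f,fg)-\EE(f^2,g)$, and use locality to pass to $\mu_{\langle M^u\rangle}$. The only cosmetic difference is that the paper evaluates $2\EE(u_n,u_nf)-\EE(u_n^2,f)$ through the effective-intervals representation of $\EE$ (Theorem~\ref{THM0}), which produces the indicator $1_{\cup_k\tI_k}$ directly, whereas you compute from the defining form \eqref{EQ2DEC} to get $\mu_{\langle M^u\rangle}=\fm$ and then insert the indicator afterward using $\fm\big((\cup_k\tI_k)^c\big)=0$.
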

\begin{proof}
For every $n\geq 1$, take $u_n\in C_c^\infty(\bR)$ such that $u_n=u$ on $(-n,n)$. Denote the MAF in the Fukushima's decomposition of $X$ relative to $u_n$ by $M^{u_n}$. 
For any $f\in C_c^\infty(\bR)$ with $\text{supp}[f]\subset (-n,n)$, it follows from \cite[Theorem~5.2.3]{FOT11} and Theorem~\ref{THM0} that 
\[
\begin{aligned}
	\int_\bR fd\mu_{\langle M^{u_n}\rangle}&= 2\EE(u_n,u_nf)-\EE(u_n^2,f) \\&
=\sum_{k\geq 1}\int_{\tI_k}f\cdot \left(\frac{du_n}{d\fs_k}\right)^2d\fs_k \\
	&=\int_{\cup_{k\geq 1}\tI_k} f(x) \fm(dx). 
\end{aligned}\]
Hence $\mu_{\langle M^{u_n}\rangle}=1_{\cup_{k\geq 1}\tI_k}\cdot \fm$ on $(-n,n)$. Note that $\mu_{\langle M^u\rangle}=\mu_{\langle M^{u_n}\rangle}$ on $(-n,n)$.  
This leads to \eqref{EQ4MMU}. 
\end{proof}

Then we can conclude the following description of $M^u$.

\begin{proposition}\label{PRO2}
For any $x\in \cup_{k\geq 1}\tI_k$, $M^u$ is equivalent to a one-dimensional standard Brownian motion up to $\zeta$ under the probability measure $\mathbf{P}_x$. In other words, there exists a one-dimensional standard Brownian motion $B_t$ under $\mathbf{P}_x$ such that
\begin{equation}\label{EQ4PXM}
M^u_t=B_t,\quad 0\leq t<\zeta. 
\end{equation}
For any $x\notin \cup_{k\geq 1}\tI_k$, it holds in the sense of $\mathbf{P}_x$-a.s., 
\[
	M^u_t=0,\quad \forall t\geq 0. 
\]
\end{proposition}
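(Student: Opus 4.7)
The plan is to combine the energy-measure identity \eqref{EQ4MMU} just established with L\'evy's characterization of Brownian motion. First I would observe that by Lemma~\ref{LM32}(1) the measure $\fm(dx)=\rho(x)dx$ puts no mass on $S(\rho)=(\cup_k\mathring{\tI}_k)^c$, so that $\mu_{\langle M^u\rangle}=1_{\cup_k\tI_k}\cdot\fm=\fm$. Since the deterministic PCAF $A_t:=t$ also has Revuz measure $\fm$, the uniqueness in the Revuz correspondence yields $\langle M^u\rangle_t=t$ for $0\leq t<\zeta$ under $\mathbf{P}_x$, a.s. for q.e.\ $x$. L\'evy's criterion, applied to the continuous local martingale $M^u$ on $[0,\zeta)$ with quadratic variation $t$, then produces a standard Brownian motion $B$ with $M^u_t=B_t$ for $0\leq t<\zeta$.

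To pass from q.e.\ to every $x\in\cup_k\tI_k$, I would invoke the fact (recorded just after Lemma~\ref{LM32}) that each singleton of $\cup_k\tI_k$ has positive $\EE$-capacity. Consequently no $\EE$-polar set meets $\cup_k\tI_k$, so the identity $\langle M^u\rangle_t=t$ and hence \eqref{EQ4PXM} is valid under $\mathbf{P}_x$ for every $x\in\cup_k\tI_k$.

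For $x\notin\cup_k\tI_k$, I would use that each $\tI_k$ is invariant for $X$, whence $(\cup_k\tI_k)^c$ is invariant and $X_s\notin\cup_k\tI_k$ for all $s<\zeta$ under $\mathbf{P}_x$. The PCAF $t\mapsto\int_0^t 1_{\cup_k\tI_k}(X_s)ds$ has Revuz measure $1_{\cup_k\tI_k}\cdot\fm=\fm=\mu_{\langle M^u\rangle}$, and the Revuz identification combined with the invariance forces $\langle M^u\rangle_t=0$ for all $t\geq 0$, $\mathbf{P}_x$-a.s. Being a continuous local martingale with vanishing quadratic variation that starts at $0$, $M^u$ must be identically zero.

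The main obstacle I anticipate is the careful handling of starting points in $(\cup_k\tI_k)^c$: the Revuz correspondence only identifies PCAFs up to an $\EE$-exceptional modification, so one must verify that the exceptional set can be arranged to lie inside $(\cup_k\tI_k)^c$ and that the pathwise identities survive for every such $x$, using the invariance of $\cup_k\tI_k$ and, when $x$ happens to be a trap of $X$, the triviality of deterministic continuous martingales.
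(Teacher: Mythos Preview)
Your proposal is correct and follows essentially the same route as the paper: compute the energy measure $\mu_{\langle M^u\rangle}$, pass to the quadratic variation via the Revuz correspondence, and apply L\'evy's characterization. The only cosmetic difference is that the paper identifies $\langle M^u\rangle_t$ with the explicit PCAF $\int_0^t 1_{\cup_k\tI_k}(X_s)\,ds$ rather than with $t$, which makes the trap-point case $x\notin\cup_k\tI_k$ immediate (the integrand vanishes identically on the constant path) and avoids the exceptional-set bookkeeping you flag as the main obstacle.
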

\begin{proof}
Note that \eqref{EQ4MMU} indicates that for $x\in\bR$ and $\mathbf{P}_x$-a.s.,
\[
\langle M^u\rangle_t=\int_0^t 1_{\{\cup_{k\geq 1}\tI_k\}}(X_s)ds,\quad  t\geq 0.
\]
In the case of $x\in \cup_{k\geq 1}\tI_k$, $\langle M^u\rangle_t=t\wedge \zeta$ and hence \eqref{EQ4PXM} holds by virtue of \cite[Chapter 5, (1.7)]{RY99}. However in the case of $x\notin \cup_{k\geq 1}\tI_k$, $\langle M^u\rangle_t=0$ for any $t\geq 0$. Therefore, $M^u_t\equiv 0$ for any $t\geq 0$. 
\end{proof}

\subsection{Zero energy part}\label{SEC42}


For each $k$, denote the inverse function of $\fs_k$ by $\ft_k$. More precisely, let 
\[
	\tJ_k:=\fs_k(\tI_k)=\{\fs_k(x): x\in \tI_k\}.
\] 
and then
\[
	\ft_k=\fs_k^{-1}: \tJ_k\rightarrow \tI_k. 
\]
Note that $\fs_k(\fe_k)=0$, $\tJ_k$ is also an interval and we denote it by $\tJ_k=\langle \fc_k, \fd_k\rangle$. Moreover, if $\fa_k\notin \tI_k$ and $\fa_k\neq -\infty$ (resp. $\fb_k\notin \tI_k$ and $\fb_k\neq \infty$), then $\fc_k=-\infty$ (resp. $\fd_k=\infty$). See \S\ref{SEC21}. 

\begin{lemma}
Fix $k\geq 1$. Then $\ft_k$ is absolutely continuous on $\tJ_k$ and for a.e. $y\in \tJ_k$, 
\begin{equation}\label{EQ4TKY}
	\ft'_k(y)=\rho(\ft_k(y)). 
\end{equation}	
Furthermore, $\ft'_k\in L^2_\mathrm{loc}(\tJ_k)$. 
\end{lemma}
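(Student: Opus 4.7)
The plan is to read off both the absolute continuity of $\ft_k$ and the formula $\ft'_k = \rho\circ\ft_k$ directly from a change-of-variables identity, rather than going the roundabout way of first establishing Lusin's (N) property for $\ft_k$. Throughout, I will use the properties from Lemma~\ref{LM32}: $\fs_k$ is absolutely continuous on $\tI_k$ with $\fs'_k = 1/\rho$ a.e. and $\fs'_k > 0$ a.e. on $\tI_k$; in particular, $\rho(x)\fs'_k(x)=1$ for a.e. $x\in \tI_k$.

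First I would fix a compact subinterval $[y_1,y_2]\subset \tJ_k$ and set $x_i:=\ft_k(y_i)$, so that $[x_1,x_2]\subset \tI_k$ is compact. Applying the classical change-of-variables formula for an absolutely continuous strictly increasing homeomorphism (valid here because $\fs_k:[x_1,x_2]\to[y_1,y_2]$ is such a map) to the nonnegative Borel integrand $f(y):=\rho(\ft_k(y))$, I get
\begin{equation*}
\int_{y_1}^{y_2}\rho(\ft_k(y))\,dy=\int_{x_1}^{x_2}\rho(\fs_k^{-1}(\fs_k(x)))\,\fs'_k(x)\,dx=\int_{x_1}^{x_2}\rho(x)\fs'_k(x)\,dx=x_2-x_1,
\end{equation*}
using $\rho\cdot\fs'_k=1$ a.e.\ on $\tI_k$ in the last step. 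Rewriting, this reads $\ft_k(y_2)-\ft_k(y_1)=\int_{y_1}^{y_2}\rho(\ft_k(y))\,dy$ for every pair $y_1<y_2$ in $\tJ_k$, which is precisely the statement that $\ft_k$ is absolutely continuous on every compact subinterval of $\tJ_k$ with $\ft'_k(y)=\rho(\ft_k(y))$ for a.e.\ $y\in \tJ_k$. This gives \eqref{EQ4TKY} simultaneously with the absolute continuity.

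For the final $L^2_{\mathrm{loc}}$ claim, I would apply the same change-of-variables identity once more, now with integrand $\rho(\ft_k(y))^2$: on a compact $[y_1,y_2]\subset \tJ_k$,
\begin{equation*}
\int_{y_1}^{y_2}\ft'_k(y)^2\,dy=\int_{y_1}^{y_2}\rho(\ft_k(y))^2\,dy=\int_{x_1}^{x_2}\rho(x)^2\,\fs'_k(x)\,dx=\int_{x_1}^{x_2}\rho(x)\,dx<\infty,
\end{equation*}
the finiteness being immediate from $\rho\in L^1_{\mathrm{loc}}(\bR)$ of assumption (A).

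The only subtlety I expect is formally justifying the change-of-variables formula $\int_a^b f(\fs_k(x))\fs'_k(x)\,dx=\int_{\fs_k(a)}^{\fs_k(b)} f(y)\,dy$ for Borel $f\geq 0$ when $\fs_k$ is only absolutely continuous. This is a standard Lebesgue-integration result (approximate first for $f=\mathbf{1}_{(\alpha,\beta)}$, where both sides reduce to $\fs_k^{-1}$-length and can be checked via AC; extend to Borel by monotone class), so it is a citation rather than a genuine obstacle. Handling the points where $\rho=0$ or $\fs'_k=0$ (a measure-zero set within $\tI_k$) plays no role since the identity $\rho\cdot\fs'_k=1$ holds a.e.
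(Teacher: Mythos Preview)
Your proof is correct and is essentially the same as the paper's: both compute $\int_{y_1}^{y_2}\rho(\ft_k(y))\,dy$ via the change of variable $y=\fs_k(x)$, use $\rho\cdot\fs'_k=1$ a.e.\ to reduce it to $\ft_k(y_2)-\ft_k(y_1)$, and then repeat with $\rho^2$ for the $L^2_{\mathrm{loc}}$ claim. The only cosmetic difference is that the paper writes the change of variable as $dy=d\fs_k(x)$ (Lebesgue--Stieltjes measure) whereas you write $\fs'_k(x)\,dx$; your extra remarks on justifying the change-of-variables formula are fine but not needed beyond what the paper does.
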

\begin{proof}
For any $y_1, y_2\in \tJ_k$, 
\[
	\int_{y_1}^{y_2}\rho(\ft_k(y))dy=\int_{\ft_k(y_1)}^{\ft_k(y_2)}\rho(x)d\fs_k(x)=\int_{\ft_k(y_1)}^{\ft_k(y_2)}\rho(x)\cdot \frac{1}{\rho(x)}dx=\ft_k(y_2)-\ft_k(y_1).
\]
Since $|\ft_k(y_2)-\ft_k(y_1)|<\infty$, it follows that $\rho\circ \ft_k\in L^1_\mathrm{loc}(\tJ_k)$. This implies $\ft_k$ is absolutely continuous and \eqref{EQ4TKY} holds. Moreover,
\[
	\int_{y_1}^{y_2}\ft'_k(y)^2dy=\int_{y_1}^{y_2}\rho(\ft_k(y))^2dy=\int_{\ft_k(y_1)}^{\ft_k(y_2)}\rho^2\cdot \frac{1}{\rho}dx<\infty,
\]
since $\rho\in L^1_\mathrm{loc}(\mathbb{R})$. 
\end{proof}

A function $F$ is called locally of bounded variation on an interval $\tI=\langle \fa,\fb\rangle$ if of bounded variation on every compact subinterval of $\tI$. Throughout this paper, we always take its canonical version in the following sense if without other statements.
\begin{definition}\label{DEF1}
Let $F$ be a function locally of bounded variation on $\tI=\langle \fa,\fb\rangle$. The canonical version $\tilde{F}$ of $F$ is defined as follows: 
\[
	\tilde{F}(x):= \lim_{y\downarrow x} F(y),\quad x\in \tI\setminus \{\fb\}, 
\]
and if $\fb\in \tI$, $\tilde{F}(\fb):=0$. The left limits of $\tilde{F}$ are
\[
	\tilde{F}(x-):=\lim_{y\uparrow x}F(y),\quad x\in \tI\setminus \{\fa\},
\]
and if $\fa\in \tI$, $\tilde{F}(\fa-):=0$. Set further $\tilde{F}^*(x):=\tilde{F}(x)-\tilde{F}(x-)$. 
\end{definition}
For the sake of brevity, the canonical version of $F$ is still denoted by $F$. Restricting to every compact subinterval of $\tI$, $F$ induces a finite signed measure. By applying the Jordan decomposition to these signed measures, one can obtain two positive Radon measures $\nu_F^+$ and $\nu_F^-$ on $\tI$ with $\nu_F^+\perp \nu_F^-$. In abuse of notion, we call
\[
	\nu_F:=\nu^+_F-\nu^-_F
\]
the \emph{Radon signed measure} induced by $F$ (though $\nu_F$ may be not a signed measure) in the sense that for any compact interval $K\subset \tI$, $\nu_F|_{K}$ is the finite signed measure on $K$ induced by $F$. Write $|\nu_F|:=\nu^+_F+\nu^-_F$ which is a positive Radon measure on $\tI$. Given a $|\nu_F|$-integral function $f$, set
\[
	\int_\tI fd\nu_F:=\int_\tI fd\nu^+_F-\int_\tI fd\nu^-_F. 
\]


\begin{remark}\label{LM4}
Note that for any $x\in \tI$, $\nu_{F}(\{x\})=F^*(x)$. Particularly, the non-zero set $D_{F}:=\{x\in \tI: F^*(x)\neq 0\}$ of $F^*$ (i.e. the set of discontinuous points of $F$) is countable, and for any compact set $K\subset \tI$, 
\[
	\sum_{x\in K} |F^*(x)|<\infty. 
\]
Except for the discrete part $\sum_{x\in \tI}F^*(x)\delta_x$, $\nu_{F}$ may also contain an absolutely continuous part and a singular continuous part (with respect to the Lebesgue measure), see \cite[\S3.5]{F99}. 
\end{remark}


Now we have a position to characterize when $N^u$ is of bounded variation in the sense that for any $t<\zeta$, $N^u$ is of bounded variation on $[0,t]$ and derive its expression. 

\begin{theorem}\label{THM3}
Assume (A). The following conditions are equivalent:
\begin{itemize}
\item[(1)] The zero energy part $N^u$ in \eqref{EQ4UTX} is of bounded variation.
\item[(2)] For any $k\geq 1$, an a.e. version of $\ft'_k$ is a right continuous function locally of bounded variation on $\tJ_k$.
\item[(3)] For any $k\geq 1$, an a.e. version of $\rho|_{\tI_k}$ is a right continuous function locally of bounded variation on $\tI_k$. 
\end{itemize}
In this case,  denote the canonical version of $\rho|_{\tI_k}$ by $\rho_k$ and the induced Radon signed measure of $\rho_k$ by $\nu_{\rho_k}$. Then for any $x\in \bR$, 
\begin{equation}\label{EQ4NUT2}
N^u_t=\frac{1}{2}\sum_{k\geq 1}\int_{\tI_k}\ell^z_t\nu_{\rho_k}(dz),\quad 0\leq t<\zeta,\quad \mathbf{P}_x\text{-a.s.},
\end{equation}
where $\ell^z=(\ell^z_t)_{t\geq 0}$ is the local time of $X$ at $z$, i.e. the PCAF of the smooth measure $\delta_z$ relative to $X$.
\end{theorem}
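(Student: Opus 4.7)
The overall strategy is to reduce the characterization to a representation of $\EE(u,\cdot)$ by a smooth signed measure, for which Lebesgue--Stieltjes integration by parts on each effective interval is the workhorse. I would begin with (2) $\Leftrightarrow$ (3). By \eqref{EQ4TKY}, an a.e.\ version of $\ft'_k$ on $\tJ_k$ equals $\rho\circ\ft_k$, and $\ft_k:\tJ_k\to\tI_k$ is a strictly increasing absolutely continuous homeomorphism with absolutely continuous inverse $\fs_k$. Composition with such homeomorphisms preserves right continuity and local bounded variation, and the Radon signed measure of $\ft'_k$ is the pullback $(\fs_k)_*\nu_{\rho_k}$. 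Hence (2) and (3) are equivalent.

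For (1) $\Leftrightarrow$ (3), I would invoke the Revuz-type criterion (a version of \cite[Theorem~5.5.4]{FOT11}): $N^u$ is of bounded variation if and only if there exists a smooth signed measure $\nu$ on $\bR$ such that
\[
	\EE(u,g)=\langle \nu,g\rangle,\quad g\in C_c^\infty(\bR),
\]
and in that case $N^u_t=-\int_\bR \ell^z_t\,\nu(dz)$, where $\ell^z$ is the PCAF of $\delta_z$ in Revuz correspondence with $(\EE,\FF)$. Assuming (3), Lemma~\ref{LM32}(1),(3) give $\rho=0$ a.e.\ outside $\cup_k\mathring{\tI}_k$, so that
\[
	\EE(u,g)=\frac{1}{2}\sum_k\int_{\mathring{\tI}_k}g'(x)\rho_k(x)\,dx.
\]
Integration by parts over a compact exhaustion $[a_n,b_n]\uparrow\mathring{\tI}_k$ then produces the candidate
\[
	\nu=-\frac{1}{2}\sum_k \nu_{\rho_k}|_{\tI_k},
\]
whose smoothness follows from the characterization of polar sets in Appendix~\ref{SEC22}: the exceptional set $(\cup_k\tI_k)^c$ is $\fm$-polar and each $|\nu_{\rho_k}|$ is Radon on $\tI_k$. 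Formula \eqref{EQ4NUT2} then reads off the Revuz correspondence applied to $-\nu$. Conversely, for (1) $\Rightarrow$ (3), testing the identity $\EE(u,g)=\langle\nu,g\rangle$ against $g\in C_c^\infty(\mathring{\tI}_k)$ identifies the distributional derivative of $\rho|_{\mathring{\tI}_k}$ with a Radon measure, which supplies the required right continuous locally BV a.e.\ version of $\rho|_{\tI_k}$.

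The main obstacle is the careful treatment of the boundaries of the $\tI_k$. An endpoint may or may not belong to $\tI_k$ depending on whether $\fs_k$ has a finite limit there (Remark~\ref{RM2}); an included endpoint is accessible and carries positive capacity, while an excluded endpoint lies in $(\cup_k\mathring{\tI}_k)^c$ and is hence polar. The integration by parts leaves surface contributions $g(b_n)\rho_k(b_n)-g(a_n)\rho_k(a_n)$ that one must show either vanish (at polar endpoints, using that $1/\rho\notin L^1_\mathrm{loc}$ there by Lemma~\ref{LM32}) or combine with the interior Stieltjes integral into precisely the Dirac contributions at included endpoints dictated by the canonical convention of Definition~\ref{DEF1}. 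Settling this boundary accounting delivers both the smoothness of $\nu$ and the explicit formula \eqref{EQ4NUT2}.
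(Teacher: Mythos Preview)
Your reduction to a Revuz-type criterion and the treatment of (2) $\Leftrightarrow$ (3) match the paper, and your argument for (1) $\Rightarrow$ (3) by testing against $g\in C_c^\infty(\mathring{\tI}_k)$ is essentially the paper's (1) $\Rightarrow$ (2) in the unscaled coordinates. The gap is in (3) $\Rightarrow$ (1). First, the criterion you quote is not quite \cite[Theorem~5.5.4]{FOT11}: the actual statement requires an $\EE$-nest $\{K_m\}$ of compact sets associated with $\nu$ and the identity $\EE(u,f)=\langle\nu,f\rangle$ for all $f\in\bigcup_m\FF_{b,K_m}$, not merely for $g\in C_c^\infty(\bR)$. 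This matters because $|\nu|$ need not be Radon on $\bR$ (only on each $\tI_k$), so $\langle\nu,g\rangle$ for a generic $g\in C_c^\infty(\bR)$ may fail to be defined, and density of $C_c^\infty$ in $\FF$ does not by itself upgrade the identity.

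Second, your proposed resolution of the boundary terms at an excluded endpoint is incorrect: the condition $\int_{\fa_k}^{\fe_k}1/\rho=\infty$ does \emph{not} force $\rho_k(x)\to 0$ as $x\downarrow\fa_k$. A locally BV $\rho_k$ on $(\fa_k,\fb_k)$ can oscillate near $\fa_k$ with $\limsup\rho_k>0$ while still having $\int 1/\rho_k=\infty$ (e.g.\ a step function alternating between $1$ and $1/n^2$ on intervals of length $\asymp 1/n^2$), and then $g(a_n)\rho_k(a_n)$ has no limit. The paper sidesteps the whole boundary issue by \emph{choosing} the nest $K_m=\bigcup_{k=1}^m F^k_m$ with each $F^k_m$ a compact subinterval of $\tI_k$ containing the closed endpoints. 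Then every $f\in\FF_{b,K_m}$ has $f|_{\tI_k}$ compactly supported in $\tI_k$, so integration by parts against $\rho_k$ produces no spurious boundary contribution at excluded endpoints, and the included-endpoint Dirac masses come for free from the canonical convention in Definition~\ref{DEF1}. The passage from $C_c^\infty(\tI_k)$ to all of $\FF^{(\fs_k)}_{F^k_m}$ uses that $C_c^\infty(\tI_k)$ is a special standard core of $(\EE^{(\fs_k)},\FF^{(\fs_k)})$, giving uniform approximation on $F^k_m$ where $|\nu|$ is finite. Once you route the argument through this nest, your outline works.
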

\begin{proof}
Note that by \cite[Theorem~5.5.4]{FOT11}, $N^u$ is of bounded variation, if and only if there exists a smooth signed measure $\nu$ and an $\EE$-nest $\{K_m:m\geq 1\}$ of compact sets associated with $\nu$ such that
\begin{equation}\label{EQ4EUF}
	\EE(u,f)=\langle\nu, f\rangle,\quad \forall f\in \cup_{m\geq 1}\FF_{b, K_m}. 
\end{equation}
Write $\nu=\nu^+-\nu^-$ for the Jordan decomposition of $\nu$, where $\nu^+$ and $\nu^-$ are both positive smooth measures associated with $\{K_m:m\geq 1\}$. 

\emph{(1) $\Rightarrow$ (2)}. 
Suppose $N^u$ is of bounded variation and $\nu, \{K_m: m\geq 1\}$ are given above. Fix $k\geq 1$ and take $\varphi\in C_c^\infty(\bR)$ with $\text{supp}[\varphi]\subset \mathring{\tJ}_k=(\fc_k, \fd_k)$.  Set $f:=\varphi\circ \fs_k$. Then $\text{supp}[f]\subset (\fa_k,\fb_k)$. It follows from Theorem~\ref{THM2}~(4) that $f\in \FF_{b,K_m}$ for some $m$. Thus
\[
\EE(u,f)=\langle \nu, f\rangle.
\]
Let $\tilde{\nu}^{\ft,\pm}_k$ be the image measure of $\nu^\pm|_{\tI_k}$ under the map $\fs_k$. 
Set $\tilde{\nu}^{\ft}_k:=\tilde{\nu}^{\ft,+}_k-\tilde{\nu}^{\ft,-}_k$ which is a finite signed measure when restricting to each compact subset of $\tJ_k$. Then one can find a function locally of bounded variation on $\tJ_k$ whose induced Radon signed measure is $\tilde{\nu}^{\ft}_k$. Denote the canonical version of this function by $F$. Note that $\nu_F$ coincides with $\tilde{\nu}^{\ft}_k$ on $\mathring{\tJ}_k$. 
It follows from
\[
	\EE(u,f)=\frac{1}{2}\int_{\tI_k}\frac{du}{d\fs_k}\varphi'\circ \fs_kd\fs_k=\frac{1}{2}\int_{\tJ_k}\ft'_k(y)\varphi'(y)dy
\]
and 
\[
	\langle \nu,f\rangle=\langle \tilde{\nu}^\ft_k, \varphi\rangle=\langle \nu_F,\varphi\rangle=-\int_{\fc_k}^{\fd_k}F(y)\varphi'(y)dy
\]
that 
\[
	\int_{\tJ_k}\varphi'(y)\ft'_k(y)dy=-2\int_{\fc_k}^{\fd_k}F(y)\varphi'(y)dy. 
\]
This leads to 
\[
	\int_{\fc_k}^{\fd_k}\varphi'(y)\left(\ft'_k(y)-2F(y)\right)dy=0, \quad \forall \varphi \in C_c^\infty((\fc_k, \fd_k)). 
\]
Hence we can conclude that $\ft'_k(y)-2F(y)\equiv C$ for some constant $C$ and a.e. $y\in (\fc_k, \fd_k)$. Particularly, an a.e. version of $\ft'_k$ is right continuous and  locally of bounded variation on $\tJ_k$. 

\emph{(2) $\Rightarrow$ (3).} It follows from \eqref{EQ4TKY} that 
\[
	\rho(x)=\ft'_k(\fs_k(x)),\quad \text{a.e. } x\in \tI_k. 
\]
Since absolutely continuous and strictly increasing by Lemma~\ref{LM32}, $\fs_k$ is a homeomorphism from $\tI_k$ to $\tJ_k$. Hence the second condition implies the third one. 

\emph{(3) $\Rightarrow$ (1).} Take such an a.e. version of $\rho|_{\tI_k}$ and denote it by $\rho_k$. Its induced Radon signed measure on $\tI_k$ is further denoted by $\nu_{\rho_k}=\nu_{\rho_k}^+-\nu_{\rho_k}^-$. Set
\begin{equation}\label{EQ4NKN}
\nu^-=\frac{1}{2}\sum_{k\geq 1}\nu^+_{\rho_k},\quad \nu^+=\frac{1}{2}\sum_{k\geq 1}\nu^-_{\rho_k},\quad \nu:=\nu^+-\nu^-.
\end{equation}
We can easily check that $\nu^\pm$ charges no Borel subsets of $(\cup_{k\geq 1}\tI_k)^c$ and $\nu^\pm|_{\tI_k}$ is a positive Radon measure on $\tI_k$. Thus $\nu$ is a smooth signed measure relative to $X$ by Corollary~\ref{COR1}. For each $k$, take an increasing sequence of compact intervals $\{F^k_m: m\geq 1\}$ such that $\cup_{m\geq 1}F^k_m=\tI_k$ and the closed endpoints of $\tI_k$ are contained in $F^k_m$. Then $\{F^k_m: m\geq 1\}$ is an $\EE^{(\fs_k)}$-nest. Write
\[
K_m:=\cup_{k=1}^mF^k_m.
\]
We know from Corollary~\ref{COR1} that $\{K_m: m\geq 1\}$ is an $\EE$-nest of compact sets associated with $\nu$. 
Fix $m$ and $f\in \FF_{b,K_m}$. Let $f_k:=f|_{\tI_k}$ for $1\leq k\leq m$.  It follows that
\[
	2\EE(u,f)=\sum_{k=1}^m \int_{\tI_k} f'_k(x)\rho_k(x)dx.
\]
Note that $C_c^\infty(\tI_k)$ is a special standard core of $(\EE^{(\fs_k)}, \FF^{(\fs_k)})$ (see \cite[Theorem~3.7]{LY172}). Thus for any $1\leq k\leq m$, we can take a compact interval $W$ with $F^k_m\subset W\subset \tI_k$ and a sequence $\{g^k_p: p\geq 1\}\subset C_c^\infty(\tI_k)$ with $\text{supp}[g^k_p]\subset W$ such that $g^k_p\rightarrow f_k$ in the $\EE^{(\fs_k)}_1$-norm. Particularly, $g^k_p$ converges to $f_k$ uniformly on $F^k_m$. 
On the other hand, it follows from \cite[Theorem~3.36 and Exercise 34(b)]{F99} that for any $g\in C_c^\infty(\tI_k)$, 
\[
	\int_{\tI_k} g'(x)\rho_k(x)dx=-\int_{\tI_k} gd\nu_{\rho_k}. 
\]
Applying this formula to $g^k_p$ and letting $p\rightarrow\infty$, we obtain
\[
\int_{\tI_k}f'_k(x)\rho_k(x)dx=-\int_{\tI_k} f_kd\nu_{\rho_k}
\]
Therefore, $\EE(u,f)=\langle \nu, f\rangle$ and $N^u$ is of bounded variation.

Finally, \eqref{EQ4NUT2} is implied by \cite[Theorem~5.5.4]{FOT11} and \eqref{EQ4NKN}. That completes the proof.
\end{proof}

\begin{remark}\label{RM3}
Let us give some remarks for this theorem.
\begin{itemize}
\item[(1)] Consider $x\in \tI_k$ for some $k\geq 1$. We have $\mathbf{P}_x(\ell^z_t\equiv 0,\forall t)=1$ for any $z\notin \tI_k$ (see \cite[Lemma~5.1.11]{FOT11}). Thus in fact it holds in \eqref{EQ4NKN} that $\mathbf{P}_x$-a.s.,
 \[
N^u_t=\frac{1}{2}\int_{\tI_k}\ell^z_t\nu_{\rho_k}(dz),\quad  0\leq t<\zeta.
 \]
 Otherwise if $x\notin \cup_{k\geq 1}\tI_k$, then $\mathbf{P}_x(N^u_t\equiv 0,\forall t)=1$. 
\item[(2)] The function $\rho_k$ could replaced by a different version. For example, suppose $\tI_k=[\fa_k, \fb_k]$, take two constants $c_1,c_2\neq 0$ and let $\tilde{\rho}_k(\fa_k-)=c_1$ and
\[
	\tilde{\rho}_k(x):=\left\{\begin{aligned}
	&\rho_k(x),\quad \fa_k\leq x<\fb_k,\\
	&c_2,\quad x= \fb_k. 
	\end{aligned} \right.
\]
Then $\tilde{\rho}_k$ is still a right continuous function (locally) of bounded variation. Denote its induced Radon signed measure by $\nu_{{\tilde{\rho}}_k}$. We find
\[
\begin{aligned}
\nu_{\rho_k}=\nu_{{\tilde{\rho}}_k}+c_1\delta_{\fa_k}-c_2\delta_{\fb_k}. 
\end{aligned}
\]
For any $x\in \tI_k$, it holds that $\mathbf{P}_x$-a.s.,
 \[
N^u_t=\frac{1}{2}\int_{\tI_k}\ell^z_t\nu_k^{\tilde\rho}(dz)+\frac{c_1}{2}\ell^{\fa_k}_t-\frac{c_2}{2}\ell^{\fb_k}_t,\quad 0\leq t< \zeta.
\]
For the sake of brevity, the canonical version $\rho_k$ was chosen. 
\item[(3)] The equivalence between the first and second conditions has been studied in \cite{FL16} for a simple case that $X$ is an irreducible diffusion on an open interval. 
\item[(4)] The case that $\rho$ is locally of bounded variation on $\bR$ is treated in \cite[Theorem~7.1]{Fu99}. That result by Fukushima is valid not only for one-dimensional distorted Brownian motions but also for multi-dimensional ones. 
\end{itemize}
\end{remark}


When the equivalent conditions in Theorem~\ref{THM3} hold, we may assume without loss of generality that $\rho(x)=\rho_k(x)$ on $\tI_k$ for any $k\geq 1$ and $\rho(x)=0$ for $x\notin \cup_{k\geq 1}\tI_k$. Further set for any $k\geq 1$ and $x\in \tI_k$,
\[
	\rho(x-):=\rho_k(x-)
\]
and otherwise if $x\notin \cup_{k\geq 1}\tI_k$, set $\rho(x-):=0$. Let $\rho^*(x):=\rho(x)-\rho(x-)$. Finally, define $\nu_\rho:=\sum_{k\geq 1}\nu_{\rho_k}$ and $|\nu_\rho|:=\sum_{k\geq 1}|\nu_{\rho_k}|$ for later use. 



\subsection{Semi-martingale representation}\label{SEC44}

Eventually we can present the semi-martingale decomposition of $X$. Note that $(X_t, \mathbf{P}_x)$ is called a semi-martingale up to $\zeta$ if provided that there exists a sequence of stopping times $\sigma_n$ increasing to $\zeta$ such that $X^{\sigma_n}_t:=X_{t\wedge \sigma_n}$ is a semi-martingale. 

\begin{corollary}\label{THM5}
Assume (A) and that an a.e. version of $\rho|_{\tI_k}$ is locally of bounded variation on $\tI_k$ for any $k\geq 1$. Then $X$ is a semi-martingale up to $\zeta$ under the probability measure $\mathbf{P}_x$ for any $x\in \mathbb{R}$.
Meanwhile, the semi-martingale decomposition of $X$ is as follows: for any $x\in \cup_{k\geq 1}\tI_k$, there exists a standard Brownian motion $B=(B_t)_{t\geq 0}$ under the probability measure $\mathbf{P}_x$ such that
\begin{equation}\label{EQ4XTXB}
X_t-x=B_t+\frac{1}{2}\int_{\bR}\ell^z_t\nu_{\rho}(dz),\quad 0\leq t<\zeta,\quad \mathbf{P}_x\text{-a.s.},
\end{equation}
where $\ell^z$ is the local time of $X$ at $z$ and $\nu_{\rho}$ is given in the end of \S\ref{SEC42}; otherwise if $x\notin \cup_{k\geq 1}\tI_k$, $X_t\equiv x$ for any $t\geq 0$. 
\end{corollary}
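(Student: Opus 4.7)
The plan is to assemble three ingredients already established in the paper: the Fukushima decomposition \eqref{EQ4UTX}, Proposition~\ref{PRO2} identifying the martingale part $M^u$ with a standard Brownian motion, and Theorem~\ref{THM3} together with Remark~\ref{RM3}(1) expressing $N^u$ in terms of local times. Once combined, the semi-martingale property up to $\zeta$ follows by routine localization.

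First I would fix $x \in \cup_{k \geq 1}\tI_k$ and work under $\mathbf{P}_x$. Every such point is a non-exceptional starting point, because each single point in $\cup_{k\geq 1}\tI_k$ is of positive capacity (as observed immediately after Remark~\ref{RM2}); hence the Fukushima decomposition \eqref{EQ4UTX} holds $\mathbf{P}_x$-a.s. on $[0,\zeta)$. Proposition~\ref{PRO2} gives $M^u_t = B_t$ for some standard Brownian motion $B$ under $\mathbf{P}_x$, while Theorem~\ref{THM3} together with Remark~\ref{RM3}(1) yields $N^u_t = \tfrac{1}{2}\int_{\tI_k}\ell^z_t\,\nu_{\rho_k}(dz)$. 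Using that $\ell^z_t \equiv 0$ for $z \notin \tI_k$ under $\mathbf{P}_x$, this may be rewritten as $\tfrac{1}{2}\int_{\bR}\ell^z_t\,\nu_\rho(dz)$, giving \eqref{EQ4XTXB}. For the complementary case $x \notin \cup_{k\geq 1}\tI_k$, Proposition~\ref{PRO2} gives $M^u \equiv 0$ and Remark~\ref{RM3}(1) gives $N^u \equiv 0$, so $X_t \equiv x$.

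To upgrade to "semi-martingale up to $\zeta$", I would introduce a localizing sequence $\sigma_n \uparrow \zeta$, for instance $\sigma_n := \inf\{t \geq 0 : X_t \notin F_n\}\wedge n$, where $\{F_n\}$ is an increasing sequence of compact subsets of the effective interval $\tI_k$ containing $x$, whose union is $\tI_k$ (if $\tI_k$ is unbounded, arrange that $\sigma_n \uparrow \zeta$ by choosing $F_n$ to exhaust $\tI_k$ away from its open infinite endpoints). On each $[0,\sigma_n]$ the process $B$ is a continuous local martingale, while by \cite[Lemma~5.1.11]{FOT11} $\ell^z_{\sigma_n}$ is bounded and vanishes for $z \notin F_n$; since $|\nu_\rho|(F_n) < \infty$ by the local-BV hypothesis on $\rho_k$, the process $\tfrac{1}{2}\int_{\bR}\ell^z_{\cdot \wedge \sigma_n}\,\nu_\rho(dz)$ is continuous with finite total variation. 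Therefore $X^{\sigma_n}$ is a semi-martingale for each $n$, and $X$ is a semi-martingale up to $\zeta$.

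The main obstacle is largely bureaucratic rather than conceptual: almost all of the real analytic content is contained in Proposition~\ref{PRO2} and Theorem~\ref{THM3}. What remains delicate is (i) justifying that the Fukushima decomposition, a priori valid only q.e., holds under $\mathbf{P}_x$ for every $x \in \cup_{k}\tI_k$ — handled by the positive-capacity observation — and (ii) selecting the localizing sequence so that the bounded-variation estimate for the local-time integral is genuinely uniform on $[0, \sigma_n]$, which forces one to use that $\sigma_n$ confines $X$ to a compact subset of $\tI_k$ where $|\nu_\rho|$ is finite.
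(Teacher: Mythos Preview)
Your proposal is correct and follows essentially the same approach as the paper: combine Proposition~\ref{PRO2} and Theorem~\ref{THM3} (with Remark~\ref{RM3}(1)) to obtain the decomposition \eqref{EQ4XTXB}, then localize to conclude the semi-martingale property up to $\zeta$. The only difference is cosmetic: the paper localizes with the simpler sequence $\sigma_n:=\inf\{t>0: X_t\notin (-n,n)\}$ and invokes \cite[Lemma~5.5.2]{FOT11} for $\sigma_n\uparrow\zeta$, whereas you use exit times from compacts inside $\tI_k$; both choices work equally well.
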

\begin{proof}
For $x\notin \cup_k\tI_k$, $X_t\equiv x$ is clearly a semi-martingale. Now consider $x\in \cup_k\tI_k$. It follows from Proposition~\ref{PRO2} and Theorem~\ref{THM3} that \eqref{EQ4XTXB} holds. For $T>0$ and a.s. $\omega$ with $T<\zeta(\omega)$,
\[
	t\mapsto N^u_t(\omega)
\] 
is of bounded variation on $[0,T]$. 
Let $\sigma_n:=\inf\{t>0: X_t\notin (-n,n)\}$. Then $\mathbf{P}_x(\lim_{n\uparrow \infty}\sigma_n=\zeta)=1$ by \cite[Lemma~5.5.2]{FOT11} and $X^{\sigma_n}$ is a semi-martingale. That completes the proof. 
\end{proof}
\begin{remark}
\begin{itemize}
\item[(1)] When $X$ is conservative, i.e. $\mathbf{P}_x(\zeta=\infty)=1$ for all $x\in \bR$ (see Proposition~\ref{THM4} in the appendix), \eqref{EQ4XTXB} holds for all $t\geq 0$ and $X$ is a semi-martingale under $\mathbf{P}_x$ for all $x\in \bR$.
\item[(2)] When $\nu_\rho$ is discrete, \eqref{EQ4XTXB} can be written as
\[
	X_t-x=B_t+\frac{1}{2}\sum_{z\in D_\rho} \rho^*(z)\cdot \ell^z_t,\quad 0\leq t<\zeta,\quad \mathbf{P}_x\text{-a.s.},
\]
where $D_\rho=\{z:\nu_\rho(\{z\})\neq 0\}$.
\end{itemize}
\end{remark}

\subsection{Examples}

We give an interesting example where $\nu_\rho$ charges all rational numbers.

\begin{example}
Let $\bQ_+=\{q^+_n: n\geq 1\}$ (resp. $\bQ_-:=\{q^-_n: n\geq 1\}$) be the set of all positive (resp. negative) rational numbers. Take two sequences of positive constants $\{\varrho_n^+: n\geq 1\}$ and $\{\varrho_n^-: n\geq 1\}$ such that $\sum_{n\geq 1} (\varrho_n^++\varrho_n^-)<\infty$. Set
\[
	\begin{aligned}
		\rho(x):=\left\lbrace 
		\begin{aligned}
			& 1+\sum_{n: q^+_n\in (0,x]}\varrho^+_n, \quad x\geq 0,\\
			& 1+\sum_{n: q^-_n\in (x,0]}\varrho^-_n,\quad x<0.
		\end{aligned}
		\right. 
	\end{aligned}
\]
Then one can verify that the distorted Brownian motion $X$ is irreducible and conservative. Clearly, $\rho$ is of bounded variation and its induced measure $\nu_\rho$ is discrete. Thus $X$ is a semi-martingale and
\[
	X_t-x=B_t+\frac{1}{2}\sum_{n\geq 1} \left(\varrho_n^+\cdot \ell^{q^+_n}_t-\varrho_n^-\cdot \ell^{q^-_n}_t \right),\quad t\geq 0,\quad \mathbf{P}_x\text{-a.s.}
\]
for any $x\in \bR$. 
\end{example}

Another example below presents a distorted Brownian motion but not a semi-martingale. 

\begin{example}
Let $A:=\cup_{k\geq 1}(\frac{1}{2k+1}, \frac{1}{2k})$ and take
\[
	\rho(x)=2\cdot 1_A(x)+1_{A^c}(x). 
\]
Since $\rho$ is bounded below and above, one can easily verify that (A) holds and the Dirichlet form with the density function $\rho$ is irreducible and conservative. However, $\rho$ is clearly not locally of bounded variation on $\bR$. Hence $X$ is not a semi-martingale.
\end{example}

\section{Local times}\label{SEC34}

In this section, we always impose the conditions in Corollary~\ref{THM5}. Consequently, $X$ is a semi-martingale up to $\zeta$ under $\mathbf{P}_x$ for any $x\in \bR$.

Fix $k$ and $z\in \tI_k$. In the semi-martingale representation of $X$ stated above, the local time $\ell^z$ is the PCAF of $\delta_z$ relative to $X$. More precisely, it is uniquely determined by the so-called Revuz correspondence: for any $f\in C_c(\bR)$, 
\[
	\delta_z(f)=\lim_{t\downarrow 0} \frac{1}{t}\int_\bR \fm(dx) \cdot \mathbf{E}_x \int_0^t f(X_s)d\ell^z_s. 
\]
Particularly, $\ell^z$ depends on the symmetric measure $\fm$ but is independent of the starting point $x$ of $X$. We will also write $\ell^z(\fm)$ for $\ell^z$ when there is a risk of ambiguity. Clearly, $d\ell^z_t$ is a.s. carried by $\{t: X_t=z\}$ (see \cite[Lemma~5.1.11]{FOT11}). Furthermore, $\ell^z$ is not trivial in the following sense. 

\begin{lemma}\label{LM35}
Take $z\in \tI_k$ and set $R_z:=\inf\{t>0: \ell^z_t>0\}$. Then for any $x\in \tI_k$, it holds
\[
	\mathbf{P}_x(R_z<\infty)>0. 
\]
\end{lemma}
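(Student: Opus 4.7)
The plan is to reduce to the case $x=z$ by a strong Markov argument, and then to show that $z$ itself is a point of regularity for the local time $\ell^z$. The ingredients are the irreducibility of $X|_{\tI_k}$ (recorded in Lemma~\ref{LM32} and the paragraph right after it), the fact that every single point of $\cup_{k\ge 1}\tI_k$ has strictly positive capacity relative to $(\EE,\FF)$, and the general correspondence between the fine support of a PCAF and the quasi-support of its Revuz measure (cf.\ \cite[Theorem~5.1.5]{FOT11}).

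First I would treat the case $x=z$. Since $\{z\}$ is not $\EE$-polar (again by the remark following Lemma~\ref{LM32}), the Dirac mass $\delta_z$ is a smooth measure whose quasi-support is exactly $\{z\}$. By the general theory of the Revuz correspondence, the quasi-support of the Revuz measure of a PCAF coincides with the fine support of that PCAF, so $z$ lies in the fine support of $\ell^z$. This is precisely the statement that
\[
\mathbf{P}_z(R_z=0)=1,
\]
which in particular gives $\mathbf{P}_z(R_z<\infty)=1>0$.

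Next I would handle an arbitrary $x\in\tI_k$ with $x\ne z$. Since $\tI_k$ is an invariant set of $X$ and $X|_{\tI_k}$ is an irreducible diffusion with scale function $\fs_k$, the hitting time $\tau_z:=\inf\{t>0:X_t=z\}$ satisfies $\mathbf{P}_x(\tau_z<\infty)>0$ (for bounded $\tI_k$ this probability is in fact $1$ by recurrence of $X|_{\tI_k}$ noted after Lemma~\ref{LM32}; for unbounded $\tI_k$ the irreducibility together with the scale-function criterion still yields strict positivity). Applying the strong Markov property at $\tau_z$ and the additivity of $\ell^z$ shows that on $\{\tau_z<\infty\}$ one has $R_z\le \tau_z+R_z\circ\theta_{\tau_z}$, where by the first step $R_z\circ\theta_{\tau_z}=0$ almost surely under the conditional law $\mathbf{P}_{X_{\tau_z}}=\mathbf{P}_z$. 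Hence
\[
\mathbf{P}_x(R_z<\infty)\ge \mathbf{P}_x(\tau_z<\infty)\cdot \mathbf{P}_z(R_z=0)=\mathbf{P}_x(\tau_z<\infty)>0.
\]

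The main technical point will be the identification, in the first step, of the fine support of the PCAF $\ell^z$ with $\{z\}$. Once that support-theoretic fact is in hand, everything else is a routine application of the strong Markov property and the irreducibility of the restricted process $X|_{\tI_k}$ summarized in Lemma~\ref{LM32}.
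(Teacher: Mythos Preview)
Your proof is correct and follows essentially the same route as the paper. The paper is slightly more streamlined: instead of splitting into the cases $x=z$ and $x\neq z$, it invokes \cite[Lemma~5.1.11]{FOT11} to conclude directly that $\mathbf{P}_x(R_z=\sigma_z)=1$ for every $x\in\tI_k$ (this is the same support-theoretic fact you use, combined with the observation that $d\ell^z$ is carried by $\{t:X_t=z\}$), and then uses irreducibility of $X|_{\tI_k}$ to get $\mathbf{P}_x(\sigma_z<\infty)>0$.
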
  
\begin{proof}
Note that the quasi support of $\delta_z$ is $\{z\}$. Then it follows from \cite[Lemma~5.1.11]{FOT11} that $\mathbf{P}_x(R_z=\sigma_z)=1$ where $\sigma_z:=\inf\{t>0: X_t=z\}$. Therefore $\mathbf{P}_x(R_z<\infty)=\mathbf{P}_x(\sigma_z<\infty)>0$ since the restriction of $X$ to $\tI_k$ is an irreducible diffusion. 
\end{proof}

\begin{remark}
When $(\EE^{(\fs_k)},\FF^{(\fs_k)})$ is recurrent, it holds that $\mathbf{P}_x(\sigma_z<\infty)=1$ for $x,z\in \tI_k$ (see \cite[pp.124]{CF12}). Hence we also have $\mathbf{P}_x(R_z<\infty)=1$. Note that the recurrence of $(\EE^{(\fs_k)},\FF^{(\fs_k)})$ is characterized in \cite[\S2.2.3]{CF12}. Particularly, if $\tI_k$ is bounded in our case, then $(\EE^{(\fs_k)},\FF^{(\fs_k)})$ is recurrent.
\end{remark}


Another local time appearing in \eqref{EQ1YTW} is the so-called symmetric semi-martingale local time  (see also the celebrated Tanaka formula such as in \cite[Chapter VI. \S1]{RY99}). Fix $x\in \tI_k$ and consider the semi-martingale $(X_t, \mathbf{P}_x)$ up to $\zeta$. Denote the symmetric semi-martingale local time of $(X_t, \mathbf{P}_x)$ at $z$ by $(L^z_t(X,x))_{t\geq 0}$ ($(L^z_t(X))_{t\geq 0}$ in abbreviation if no confusions cause), i.e. for $z\in \bR$, 
\[
	L^z_t(X)=\lim_{\varepsilon\downarrow 0} \frac{1}{2\varepsilon}\int_0^t 1_{(z-\varepsilon, z+\varepsilon)}(X_s)d\langle X\rangle_s,\quad 0\leq t<\zeta,\quad \mathbf{P}_x\text{-a.s.}, 
\]
where $\langle X\rangle$ is the quadratic variation process of $(X_t, \mathbf{P}_x)$. Particularly, $L^z(X)\equiv 0$ for $z\notin \tI_k$ and when $\fa_k\in \tI_k$ or $\fb_k\in \tI_k$,
\[
	L^{\fa_k}_t(X)=\lim_{\varepsilon\downarrow 0} \frac{1}{2\varepsilon}\int_0^t 1_{[\fa_k, \fa_k+\varepsilon)}(X_s)d\langle X\rangle_s,\quad 0\leq t<\zeta,\quad \mathbf{P}_x\text{-a.s.}, 
\]
or
\[
	L^{\fb_k}_t(X)=\lim_{\varepsilon\downarrow 0} \frac{1}{2\varepsilon}\int_0^t 1_{(\fb_k-\varepsilon, \fb_k]}(X_s)d\langle X\rangle_s,\quad 0\leq t<\zeta, \quad  \mathbf{P}_x\text{-a.s.}.
\]
Note that $L^z(X)$ is independent of $\fm$ but depends on the starting point $x$. 

We need a lemma to link $L^z(X)$ with $\ell^z$. 

\begin{lemma}\label{LM6}
Let $X$ be the semi-martingale \eqref{EQ4XTXB}. Fix $k\geq 1$ and take $x,z\in \tI_k$. The local time $\ell^z$ associated with $\delta_z$ (under the symmetric measure $\fm$) and the symmetric semi-martingale local time $L^z(X)$ of $(X_t, \mathbf{P}_x)$ at $z$ are given as above. Then it holds
\begin{equation}\label{EQ3LZT}
	L^z_t(X)=\frac{\rho(z)+\rho(z-)}{2}\cdot \ell^z_t,\quad 0\leq t<\zeta, \quad \mathbf{P}_x\text{-a.s.}
\end{equation}
\end{lemma}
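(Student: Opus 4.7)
The plan is to match two occupation-time representations of the approximating quantity and then pass to the limit $\varepsilon\downarrow 0$. By Proposition~\ref{PRO2} together with \eqref{EQ4MMU} we have $\langle X\rangle_t=\langle M^u\rangle_t=t$ on $[0,\zeta)$ under $\mathbf{P}_x$ for $x\in\tI_k$, so the definition of the symmetric semi-martingale local time simplifies to
\[
L^z_t(X)=\lim_{\varepsilon\downarrow 0}\frac{1}{2\varepsilon}\int_0^t 1_{(z-\varepsilon, z+\varepsilon)}(X_s)\,ds,\quad \mathbf{P}_x\text{-a.s.}
\]
On the other hand, because $\ell^a$ is the PCAF of $\delta_a$ relative to $\fm(dx)=\rho(x)dx$ and $\fm=\int_\bR \delta_a\,\rho(a)\,da$, the classical one-dimensional occupation-density formula (equivalently, a pathwise version of the Revuz correspondence for the family $\{\ell^a\}$) yields
\[
\int_0^t g(X_s)\,ds=\int_\bR g(a)\,\rho(a)\,\ell^a_t\,da,\quad \mathbf{P}_x\text{-a.s.}
\]
for every bounded Borel $g$. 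Specializing to $g=(2\varepsilon)^{-1}1_{(z-\varepsilon, z+\varepsilon)}$ reduces the problem to computing the $\varepsilon\downarrow 0$ limit of
\[
\frac{1}{2\varepsilon}\int_{z-\varepsilon}^{z+\varepsilon}\rho(a)\,\ell^a_t\,da.
\]

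The next step is to split this as
\[
\ell^z_t\cdot\frac{1}{2\varepsilon}\int_{z-\varepsilon}^{z+\varepsilon}\rho(a)\,da\;+\;\frac{1}{2\varepsilon}\int_{z-\varepsilon}^{z+\varepsilon}\rho(a)\bigl(\ell^a_t-\ell^z_t\bigr)\,da.
\]
The first summand converges to $\ell^z_t\cdot\tfrac{\rho(z)+\rho(z-)}{2}$ since the canonical version of $\rho|_{\tI_k}$ is right-continuous and locally of bounded variation on $\tI_k$, so both one-sided limits exist with $\rho(z+)=\rho(z)$. The second summand vanishes as soon as $a\mapsto \ell^a_t$ is continuous at $z$, and assembling these observations with the first paragraph delivers \eqref{EQ3LZT}.

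The main obstacle is therefore the spatial continuity of $a\mapsto \ell^a_t$ on $\tI_k$ (almost surely). I would establish it by transferring to the scale-transformed picture: $\tilde X_t:=\fs_k(X_t)$ is a diffusion on $\tJ_k$ in natural scale whose speed measure is the image of $\fm|_{\tI_k}$ under $\fs_k$. Itô--McKean's classical theory supplies a jointly continuous family of local times for such a diffusion, and the Revuz correspondence pulls this regularity back to $\ell^a_t$ on $\tI_k$. The boundary points $\fa_k,\fb_k$ (should they belong to $\tI_k$) are handled identically via the one-sided approximations used in the definitions of $L^{\fa_k}_t(X)$ and $L^{\fb_k}_t(X)$, together with the conventions $\rho(\fa_k-)=0$ and $\rho(\fb_k)=0$ coming from Definition~\ref{DEF1}, which automatically produce the correct coefficients $\rho(\fa_k)/2$ and $\rho(\fb_k-)/2$ in \eqref{EQ3LZT}.
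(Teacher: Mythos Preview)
Your argument is correct and takes a genuinely different route from the paper. The paper proceeds via Tanaka's formula: it writes
\[
|X_t-z|=|x-z|+\int_0^t\mathrm{sgn}(X_s-z)\,dX_s+L^z_t(X),
\]
substitutes the semi-martingale decomposition \eqref{EQ4XTXB} for $dX_s$, and then independently computes the Fukushima decomposition of $f(y)=|y-z|\in\FF_{\mathrm{loc}}$ by mimicking the proof of Theorem~\ref{THM3}, obtaining
\[
N^f_t=\tfrac12\int_{y>z,\,y\in\tI_k}\ell^y_t\,\nu_{\rho_k}(dy)-\tfrac12\int_{y<z,\,y\in\tI_k}\ell^y_t\,\nu_{\rho_k}(dy)+\frac{\rho(z)+\rho(z-)}{2}\,\ell^z_t.
\]
Matching the two expressions for $|X_t-z|$ yields \eqref{EQ3LZT} immediately, with no appeal whatsoever to the spatial regularity of $a\mapsto\ell^a_t$.

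Your approach rests instead on two external inputs: the occupation-density identity $\int_0^t g(X_s)\,ds=\int g(a)\rho(a)\,\ell^a_t\,da$ and the a.s.\ continuity of $a\mapsto\ell^a_t$ on $\tI_k$. Both hold for regular one-dimensional diffusions, and your reduction to natural scale plus It\^o--McKean is the right way to justify them; but neither fact has been established earlier in the paper, so you are importing classical diffusion machinery (and implicitly the joint measurability of $(a,\omega)\mapsto\ell^a_t(\omega)$, plus the identification of the It\^o--McKean local time with the Revuz PCAF $\ell^a$), whereas the paper's proof stays entirely within the Fukushima-decomposition framework already set up in \S\ref{SEC4}. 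The trade-off: your argument is conceptually cleaner---a pure density computation---while the paper's is more self-contained in context and sidesteps the regularity issue altogether.
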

\begin{proof}
Note that $L^z_t(X)$ is the unique increasing process such that 
\begin{equation}\label{EQ3XTZ}
	|X_t-z|=|x-z|+\int_0^t \mathrm{sgn}(X_s-z)dX_s + L^z_t(X),\quad 0\leq t<\zeta,
\end{equation}
where $\mathrm{sgn}(a)$ is taken to be $1$ for $a>0$, $-1$ for $a<0$ and $0$ for $a=0$. Indeed, by applying Tanaka formula (see \cite[Chapter VI. (1.25)]{RY99}) to $X^{\sigma_n}$ where $\sigma_n$ is given in the proof of Corollary~\ref{THM5}, one can find \eqref{EQ3XTZ} holds for $t<\sigma_n$. We obtain \eqref{EQ3XTZ} by letting $n\uparrow \infty$. Let $\nu_{\rho_k}$ be in Corollary~\ref{THM5}. Then \eqref{EQ4XTXB} and Remark~\ref{RM3}~(1) tell us 
\[
	dX_t=dB_t+\frac{1}{2}\int_{y\in \tI_k}\nu_{\rho_k}(dy) \cdot d\ell^y_t,\quad 0\leq t<\zeta.
\]
Substituting it in \eqref{EQ3XTZ}, we obtain 
\begin{equation}\label{EQ3XTZX}
\begin{aligned}
	|X_t-z|=|x-z|+&\int_0^t \mathrm{sgn}(X_s-z)dB_s \\
	&+\frac{1}{2}\int_{y>z, y\in \tI_k}\ell^y_t \nu_{\rho_k}(dy)-\frac{1}{2}\int_{y<z,y\in \tI_k}\ell^y_t \nu_{\rho_k}(dy) + L^z_t(X).
\end{aligned}\end{equation}
Note that $t\mapsto \int_0^t \mathrm{sgn}(X_s-z)dB_s$ is also a standard Brownian motion. 

Let $f(y):=|y-z|$ for any $y\in \bR$. One may easily check that $f\in \FF_\mathrm{loc}$. Thus we can write the Fukushima's decomposition relative to $f$:
\begin{equation}\label{EQ3FXT}
	f(X_t)-f(x)=M^f_t+N^f_t,\quad  0\leq t<\zeta,\quad \mathbf{P}_x\text{-a.s.},
\end{equation}
where $M^f$ is an MAF and $N^f$ is a CAF locally of zero energy. Mimicking Proposition~\ref{PRO2}, we can deduce that $M^f$ is equivalent to a standard Brownian motion. On the other hand, mimicking the proof of Theorem~\ref{THM3}, one obtains
\begin{equation}\label{EQ3NFT}
	N^f_t=\frac{1}{2}\int_{y>z, y\in \tI_k}\ell^y_t \nu_{\rho_k}(dy)-\frac{1}{2}\int_{y<z,y\in \tI_k}\ell^y_t \nu_{\rho_k}(dy) +\frac{\rho_k(z)+\rho_k(z-)}{2}\cdot \ell^z_t. 
\end{equation}
Eventually from \eqref{EQ3XTZX} \eqref{EQ3FXT} and \eqref{EQ3NFT} we can conclude \eqref{EQ3LZT}. 
\end{proof}
\begin{remark}\label{RM5}
A special case of Lemma~\ref{LM6} appears in \cite[(2.12)]{ORT15} for the situation that $\rho$ is given by \eqref{EQ1RXN}. 
\end{remark}

The relation \eqref{EQ3LZT} indicates that if $\rho(z)=\rho(z-)=0$, then $L^z(X)\equiv 0$ (though $\ell^z$ is not trivial by Lemma~\ref{LM35}). This also leads to the a.s. continuity of $y\mapsto L^y_t(X)$ at $y=z$ (see \cite[Chapter VI. (1.7)]{RY99}). A concrete example is given as follows. 

\begin{example}\label{EXA1}
Take $\rho(x)=|x|^{\alpha}$ for any $x\in \bR$ with a constant $0<\alpha<1$. One may check that (A) holds and the closure $(\EE,\FF)$ of \eqref{EQ2DEC} is a regular and strongly local Dirichlet form on $L^2(\bR,\fm)$. Since $1/\rho\in L^1_\mathrm{loc}(\bR)$, it follows from Lemma~\ref{LM32}~(4) that $(\EE,\FF)$ is irreducible. Proposition~\ref{THM4} leads to its conservativeness. Therefore, the associated diffusion $X$ is a conservative and irreducible diffusion on $\bR$. Its scale function is equal to
\[
	\fs(x)=\int_0^x \frac{1}{\rho(y)}dy=\left\lbrace\begin{aligned}
	 &\frac{|x|^{1-\alpha}}{1-\alpha},\quad x\geq 0, \\
	 &-\frac{|x|^{1-\alpha}}{1-\alpha},\quad x< 0,
	\end{aligned} \right.
\]
and its speed measure is $\fm(dx)=\rho(x)dx=|x|^\alpha dx$. 

Since $\rho$ is clearly absolutely continuous and hence locally of bounded variation, we can conclude that $X$ is a semi-martingale under $\mathbf{P}_x$ for any $x\in \bR$. But $\rho(0)=\rho(0-)=0$. Hence \eqref{EQ3LZT} tells us 
\[
	\mathbf{P}_x(L^0_t(X)\equiv 0,\forall t)=1
\]
for any $x\in \bR$, while $\ell^0$ is not trivial by Lemma~\ref{LM35}.
\end{example}

\section{General skew Brownian motions}\label{SEC5}

For the sake of convenience, we introduce the following definition. Note that the conservativeness of $X$ is characterized in Proposition~\ref{THM4}. 

\begin{definition}\label{DEF3}
Under the same assumptions as Corollary~\ref{THM5}, let $X$ be the semi-martingale \eqref{EQ4XTXB} (up to $\zeta$). When $X$ is conservative, we call it a \emph{general skew Brownian motion}. The function $\rho$ is called the density function of $X$.
\end{definition}

With the formula \eqref{EQ3LZT} at hand, one can find that a general skew Brownian motion is always a weak solution to \eqref{EQ3YTT} with certain $\mu$. 

\begin{lemma}\label{COR10}
Under the same conditions as Corollary~\ref{THM5}, assume further that $(\EE,\FF)$ is conservative. Then $Z_\rho:=\{z\in \cup_{k\geq 1}\tI_k: \rho(z)=\rho(z-)=0\}$ is of zero $|\nu_\rho|$-measure. Particularly, for any $x\in \cup_{k\geq 1}\tI_k$, $(X_t, \mathbf{P}_x)$ is a weak solution to \eqref{EQ1YTW} with
\begin{equation}\label{EQ3MZN}
	\mu(dz)=\frac{\nu_\rho(dz)}{\rho(z)+\rho(z-)}. 
\end{equation}
\end{lemma}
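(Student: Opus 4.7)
The plan is to prove the two assertions in sequence: first the measure-theoretic statement $|\nu_\rho|(Z_\rho) = 0$, which ensures the candidate $\mu$ in \eqref{EQ3MZN} is unambiguously defined, then the weak-solution identity, which follows mechanically by combining Corollary~\ref{THM5} with Lemma~\ref{LM6}.

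For $|\nu_\rho|(Z_\rho) = 0$, I would decompose $\nu_\rho = \sum_k \nu_{\rho_k}$ along the effective intervals (Theorem~\ref{THM3}) and set $E_k := Z_\rho \cap \tI_k$; it suffices to prove $|\nu_{\rho_k}|(E_k) = 0$ for each $k$. By Lemma~\ref{LM32}(3), $\rho = 1/\fs_k' > 0$ Lebesgue-a.e.\ on $\tI_k$, so $E_k \subseteq \{\rho_k = 0\}$ is Lebesgue-null and each $z \in E_k$ is a continuity point of $\rho_k$ at which $\rho_k$ vanishes. Consequently, the discrete part of $\nu_{\rho_k}$ (supported on the jump set of $\rho_k$) avoids $E_k$, and the absolutely continuous part vanishes on $E_k$ by Lebesgue negligibility. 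For the remaining singular continuous part, I would exploit the non-negativity $\rho_k \geq 0$ via a partition argument: for any compact $[a,b] \subseteq \tI_k$, enumerate as $\{(a_n, b_n)\}$ the components of $[a,b] \setminus \bar{E}_k$. Since $\rho_k = 0$ at every point of $E_k$ and since inserting such points in any partition does not decrease the variation sum (as $\rho_k \geq 0$ forces $|\rho_k(x) - 0| + |0 - \rho_k(y)| \geq |\rho_k(x) - \rho_k(y)|$), refining by all endpoints $a_n, b_n$ decouples the total variation across components and yields
\[
V_{[a,b]}(\rho_k) = \sum_n V_{(a_n, b_n) \cap [a,b]}(\rho_k),
\]
which is equivalent to $|\nu_{\rho_k}|(E_k \cap [a,b]) = 0$, hence $|\nu_{\rho_k}|(E_k) = 0$.

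For the weak-solution property, with $|\nu_\rho|(Z_\rho) = 0$ in hand, $\mu := \nu_\rho/(\rho + \rho(\cdot-))$ is well-defined on $Z_\rho^c$ (and may be declared zero on $Z_\rho$). Lemma~\ref{LM6} gives $\ell^z_t = 2 L^z_t(X)/(\rho(z) + \rho(z-))$ on $Z_\rho^c$ while $L^z_t(X) \equiv 0$ on $Z_\rho$; therefore
\[
\tfrac{1}{2}\int_\bR \ell^z_t\,\nu_\rho(dz) = \int_{Z_\rho^c} L^z_t(X)\,\mu(dz) = \int_\bR L^z_t(X)\,\mu(dz).
\]
Substituting into the semimartingale decomposition of Corollary~\ref{THM5} gives $X_t - x = B_t + \int_\bR L^z_t(X)\,\mu(dz)$, valid for all $t \geq 0$ by the conservativeness assumption. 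The integrability $z \mapsto L^z_t(X) \in L^1(|\mu|)$ follows at once from the bound $\int_\bR \ell^z_t\,|\nu_\rho|(dz) < \infty$, which holds since $N^u$ is of bounded variation (Theorem~\ref{THM3}). The main technical obstacle lies in the partition–variation step of the first part, where the interplay between the non-negativity of $\rho_k$ and the vanishing condition on $E_k$ must be carefully exploited to rule out singular-continuous mass of $\nu_{\rho_k}$ concentrating on $E_k$; once past this step the substitution leading to the weak-solution identity is routine.
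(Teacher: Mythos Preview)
Your approach is essentially the same as the paper's: both arguments decompose the complement of the closure of $Z_\rho$ in each effective interval into countably many open intervals and use the vanishing of $\rho_k$ at their endpoints to decouple the total variation, so that $|\nu_{\rho_k}|$ is carried by the closed intervals $[c_p,d_p]$ and hence does not charge $Z_\rho$.

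Two remarks are worth making. First, your preliminary split into discrete, absolutely continuous, and singular continuous parts is unnecessary: the partition/variation step handles all three simultaneously, which is how the paper proceeds. Second, and more substantively, there is a small gap in your partition step. You insert the endpoints $a_n,b_n$ of the components of $[a,b]\setminus\bar{E}_k$ and use the identity $|\rho_k(x)-0|+|0-\rho_k(y)|\geq |\rho_k(x)-\rho_k(y)|$, but this requires $\rho_k$ to vanish (from the appropriate side) at those endpoints, which lie in $\bar{E}_k$, not $E_k$. You only asserted $\rho_k=0$ on $E_k$. The paper fills this in explicitly: if $c_p\in\bar{Z}_\rho$ is the left endpoint of a component, then either $c_p\in Z_\rho$ or there is a sequence in $Z_\rho$ increasing to $c_p$, and in either case $\rho_k(c_p-)=0$ by left-limit continuity; symmetrically $\rho_k(d_p)=0$ by right-continuity. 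One also checks that any point of $\bar{Z}_\rho$ that is not one of the $c_p,d_p$ is a two-sided limit of such endpoints and hence satisfies $\rho_k(z)=\rho_k(z-)=0$. Once this is said, your variation identity follows and the rest of your argument goes through; the weak-solution part is indeed routine from Corollary~\ref{THM5} and Lemma~\ref{LM6} as you describe.
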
 
\begin{proof}
It suffices to show $|\nu_\rho|(Z_\rho)=0$. Denote the closure of $Z_\rho$ by $\bar{Z}_\rho$. Fix $k\geq 1$ and write $(\fa_k,\fb_k)\setminus \bar{Z}_\rho$ as a union of disjoint open intervals: 
\begin{equation}\label{EQ3AKB}
	(\fa_k,\fb_k)\setminus \bar{Z}_\rho=\cup_{p\geq 1}(c_p,d_p).
\end{equation}
For $c_p\in (\fa_k,\fb_k)$, we have $\rho(c_p-)=0$ since $c_p\in \bar{Z}_\rho$. Similarly we can also obtain $\rho(d_p)=0$ for $d_p\in (\fa_k,\fb_k)$. For any $z\in \left(\bar{Z}_\rho\cap (\fa_k,\fb_k)\right)\setminus \{c_p,d_p:p\geq 1\}$, one can take a subsequence of $\{d_p: p\geq 1\}$ that increases or decreases to $z$. Hence $\rho(z)=\rho(z-)=0$. Therefore $\nu_\rho|_{(\fa_k,\fb_k)}=\sum_{p\geq 1}\nu_\rho|_{[c_p, d_p]}$. Note that $\nu_\rho(\{z\})=0$ for all $z\in Z_\rho$. Eventually we can conclude $|\nu_\rho|(Z_\rho)=0$. 
\end{proof}

In Le Gall's paper \cite{LG84}, the well-posedness of \eqref{EQ3YTT} requires that $\mu$ is a finite signed measure and $|\mu(\{z\})|<1$ for any $z\in \bR$. It is worth noting that in \eqref{EQ3MZN} either of these two conditions may fail. In Example~\ref{EXA1},
\[
	\mu(dz)=\frac{\alpha}{2}\left(-|z|^{-1}dz|_{(-\infty,0)}+|z|^{-1}dz|_{(0,\infty)} \right)
\]
is not finite. Another example below shows the possibility of $|\mu(\{z\})|=1$. 

\begin{example}\label{EXA3}
For a constant $0<\alpha<1$, take 
\[
	\rho(x)=\left\lbrace
	\begin{aligned}
	&|x|^\alpha, \quad\quad\;\;\, x<0,\\
	&|x|^\alpha+1,\quad x\geq 0. 
	\end{aligned} \right.
\]
Mimicking Example~\ref{EXA1}, one can check that all the conditions in Corollary~\ref{THM5} still hold, and $X$ is irreducible and conservative. Particularly, $(X_t,\mathbf{P}_x)$ is a semi-martingale for any $x\in \bR$. However in \eqref{EQ3MZN} for this case, $\rho(0-)=0, \rho(0)=1$ and hence $\mu(\{0\})=(\rho(0)-\rho(0-))/(\rho(0)+\rho(0-))=1$.  
\end{example}


\section{SDEs involving symmetric semi-martingale local times}

The remainder of this paper is devoted to the study of the SDE of an unknown semi-martingale $Y=(Y_t)_{t\geq 0}$:
\begin{equation}\label{EQ3YTT}
\left\lbrace
\begin{aligned}
	&dY_t=dW_t+\int_\bR \mu(dz) dL^z_t(Y),\\
	&Y_0=x \in \mathbb{R},
\end{aligned}\right.
\end{equation}
where $(W_t)_{t\geq 0}$ is a standard Brownian motion and $L^z(Y)$ is the symmetric semi-martingale local time of $Y$ at $z$. Basically, we always impose that
\begin{itemize}
\item[(M0)] $\mu=\mu^+-\mu^-$ is the difference of two positive Borel measures on $\bR$ with $\mu^+\perp \mu^-$ and 
\begin{equation}\label{EQ5MZZ}
	|\mu(\{z\})|\leq 1,\quad \forall z\in \bR. 
\end{equation}
\end{itemize} 
Denote $\Xi^\pm:=\{z\in \bR: \mu(\{z\})=\pm 1\}$ and
\[
	\Xi:=\Xi^+\cup \Xi^-=\{z\in \bR: |\mu(\{z\})|=1\}.
\]
A point $z\in \Xi$ is usually called a barrier, as it turns out in \cite{BE14} that if $\mu$ is a Radon signed measure on $\bR$ then the solution $Y$ (if exists) cannot reach the left (resp. right) side of $z$ for the case $x\geq z\in \Xi^+$ (resp. $x\leq z\in \Xi^-$). Meanwhile, we call $z\in \Xi^+$ (resp. $z\in \Xi^-$) a right (resp. left) barrier.
On the other hand, the restriction \eqref{EQ5MZZ} seems necessary for the existence of solutions to \eqref{EQ3YTT}. In fact in certain situations if $|\mu(\{z\})|>1$ for some $z$, then \eqref{EQ3YTT} has no solutions as shown in \cite{LG84, BC05}. It is worth noting that when $|\mu|$ is finite, the weak solutions to \eqref{EQ5MZZ} exist and the pathwise uniqueness holds in a certain meaning (see \cite{LG84} for the case $|\mu(\{z\})|<1$ for all $z\in \bR$ and \cite{BC05} for the case admitting $|\mu(\{z\})|=1$).  

What we are mainly concerned with is the general skew Brownian motion related to the SDE \eqref{EQ3YTT}, whose definition is given as below. Particularly, Lemma~\ref{COR10} tells us if such a general skew Brownian motion $X$ exists, then $(X_t,\mathbf{P}_x)$ is a weak solution to \eqref{EQ3YTT} for all $x\in \cup_{k\geq 1}\tI_k$. If \eqref{EQ3YTT} is well posed further, then the unique solution coincides with $X$ and we can derive deeper descriptions about this solution by means of Dirichlet forms. On the other hand, if two different general skew Brownian motions are related to \eqref{EQ3YTT}, then the uniqueness of weak solutions (as well as the pathwise uniqueness) of \eqref{EQ3YTT} will not hold. 

\begin{definition}\label{DEF5}
Let $X$ be a general skew Brownian motion with the density function $\rho$. If all the conditions in Lemma~\ref{COR10} are satisfied and \eqref{EQ3MZN} holds for $\mu$ in \eqref{EQ3YTT}, we say $X$ is related to \eqref{EQ3YTT}. 
\end{definition}
\begin{remark}
By \eqref{EQ3MZN}, the condition \eqref{EQ5MZZ} is definitely necessary for the existence of general skew Brownian motions related to \eqref{EQ3YTT}. 
\end{remark}

In what follows, we will prepare a lemma in \S\ref{SEC51} and then present the main result Theorem~\ref{THM58} to characterize the existence of general skew Brownian motions related to \eqref{EQ3YTT}. 

\subsection{A lemma}\label{SEC51}

Write $|\mu|=\mu^++\mu^-$ and
\[
\mu^\pm=\mu^\pm_\mathrm{c}+\mu^\pm_\mathrm{d}
\]
where $\mu^\pm_\rc$ and $\mu^\pm_\rd$ are the discrete and continuous parts of $\mu^\pm$. Further set $\mu_\rc:=\mu^+_\rc-\mu^-_\rc$ and $\mu_\rd:=\mu^+_\rd-\mu^-_\rd$.
Moreover, $|\mu_\rc|:=\mu^+_\rc+\mu^-_\rc$ and $|\mu_\rd|:=\mu_\rd^++\mu_\rd^-$. For the sake of convenience, set
\[
	\mu_z:=\mu(\{z\}),\quad \mu^+_z:=\mu^+(\{z\}),\quad \mu^-_z:=\mu^-(\{z\}). 
\]

Let $I=(a,b)$ be an open interval and assume that  $|\mu|$ is Radon on it (i.e. for any compact set $K\subset I$, $|\mu|(K)<\infty$) and $\Xi\cap I=\emptyset$, i.e. $|\mu_z|<1$ for all $z\in I$. Take a fixed point $e\in I$ and define a function on $I$ as follows: 
\begin{equation}\label{EQ5VIZ}
	\varrho_I(z):=\left\lbrace
		\begin{aligned}
		&\exp\{2\mu_\rc((e,z])\}\prod_{e< y\leq z}\frac{1+\mu_y}{1-\mu_y},\quad e\leq z<b, \\
 &		\exp\{-2\mu_\rc((z,e])\}\prod_{z<y\leq e}\frac{1-\mu_y}{1+\mu_y},\quad a<z< e.
		\end{aligned}
		\right. 
\end{equation}
Replacing $\mu$ by $\mu^+$ or $\mu^-$ in the above expression, one can define the function $\varrho^+_I$ or $\varrho^-_I$. The following lemma concerning $\varrho_I$ as well as $\varrho^+_I$ and $\varrho^-_I$ is elementary but crucial to what follows. Note that in the sixth and seventh assertions, we take the canonical version of $\varrho_I$, i.e. $\varrho_I(a-):=0$ and $\varrho_I(b)=0$. Clearly, these values have no effects on the property that $\varrho_I$ is of bounded variation on $[a,e]$ or $[e,b]$. 

\begin{lemma}\label{LM58}
Assume that $|\mu|$ is Radon on $I=(a,b)$ and $|\mu_z|<1$ for all $z\in I$. Then the following hold:
\begin{itemize}
\item[(1)] $\varrho_I$ is a cadlag function locally of bounded variation on $I$.  
\item[(2)] $\varrho^+_I$ and $\varrho^-_I$ are cadlag and increasing.
\item[(3)] $\varrho_I(z)>0$ and $\varrho_I(z-)>0$, where $\varrho_I(z-)$ is the left limit of $\varrho$ at $z$, for all $z\in I$. Particularly, for any compact interval $K\subset I$, there exists a constant $c_K>1$ such that $1/c_K<\varrho_I(z)<c_K$ for all $z\in K$. Similar conclusions hold for $\varrho^+_I$ and $\varrho^-_I$. 
\item[(4)] $\varrho_I=\varrho^+_I/\varrho^-_I$. 
\item[(5)] $\varrho_I$ is the unique (up to a multiplicative constant) cadlag function locally of bounded variation on $I$ such that
\[
	\frac{\nu_{\varrho_I}(dz)}{\varrho_I(z)+\varrho_I(z-)}=\mu|_I(dz),
\]
where $\nu_{\varrho_I}$ is the induced Radon signed measure of $\varrho_I$ (see \S\ref{SEC42}). 
\item[(6)] If $\mu^-((a,e])<\infty$ (resp. $\mu^+([e,b))<\infty$), then $\varrho_I$ can be extended to a function of bounded variation on $[a,e]$ (resp. $[e,b]$). Particularly, the limit
\begin{equation}\label{EQ5VIA}
	\varrho_I(a):=\lim_{z\downarrow a}\varrho_I(z),\quad  (\text{resp. } \varrho_I(b-):=\lim_{z\uparrow b}\varrho_I(z))
\end{equation}
exists.
\item[(7)] $\varrho_I$ can be extended to a function of bounded variation on $[a,e]$ (resp. $[e,b]$) and the limit \eqref{EQ5VIA} is positive, if and only if $|\mu|((a,e])<\infty$ (resp. $|\mu|([e,b))<\infty$). 
\end{itemize}
\end{lemma}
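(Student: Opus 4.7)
My approach is to view $\varrho_I$ as the multiplicative Stieltjes exponential driven by $2\mu$ and verify (1)--(4) by tracking this structure. For (1), on any compact subinterval $K\subset I$ the Radon assumption gives $|\mu|(K)<\infty$, and since $|\mu_y|<1$ strictly, at most finitely many $y\in K$ can satisfy $|\mu_y|>1/2$ (otherwise the atomic mass would be infinite); for the remaining $y$ one has $|\log\frac{1+\mu_y}{1-\mu_y}|\leq 3|\mu_y|$, so $\sum_{y\in K}\log\frac{1+\mu_y}{1-\mu_y}$ converges absolutely. Combined with $z\mapsto 2\mu_\rc((e,z])$ being cadlag and BV on $K$, the logarithm $\log\varrho_I$ is cadlag BV on compact subintervals, whence (1) follows by exponentiation. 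For (2), every factor $(1+\mu^+_y)/(1-\mu^+_y)$ is $\geq 1$; monotonicity for $z\geq e$ is then clear, and for $z<e$ it follows because increasing $z$ both enlarges $\exp\{-2\mu^+_\rc((z,e])\}$ and drops factors in $[0,1]$ from the product. Positivity in (3) is immediate from $|\mu_y|<1$; the compact-interval bounds come from cadlag plus compactness applied to both $\varrho_I(z)$ and $\varrho_I(z-)$. Assertion (4) reduces to a factorwise identity because $\mu^+\perp\mu^-$ ensures that at each atom $y$ at most one of $\mu^\pm_y$ is nonzero, while $\mu_\rc=\mu^+_\rc-\mu^-_\rc$ handles the exponential factors.

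The central step is (5). I would compute $\nu_{\varrho_I}$ directly from the definition. At a jump point $y$, $\varrho_I(y)/\varrho_I(y-)=(1+\mu_y)/(1-\mu_y)$, so
\begin{equation*}
\nu_{\varrho_I}(\{y\})=\varrho_I(y-)\cdot\frac{2\mu_y}{1-\mu_y}, \qquad \varrho_I(y)+\varrho_I(y-)=\varrho_I(y-)\cdot\frac{2}{1-\mu_y},
\end{equation*}
and the quotient equals $\mu_y$. For the continuous part, $\log\varrho_I$ has continuous density $2$ against $\mu_\rc$, so the continuous part of $\nu_{\varrho_I}$ is $2\varrho_I\,d\mu_\rc$, and since $\varrho_I(z-)=\varrho_I(z)$ off the atoms the quotient there equals $\mu_\rc$. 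Assembling gives the claimed identity. For uniqueness, any other cadlag BV function $G$ satisfying this identity must have jumps $G(y)/G(y-)=(1+\mu_y)/(1-\mu_y)$ and continuous part with density $2G$ against $\mu_\rc$, which are exactly the recursion and linear Stieltjes ODE determining $\varrho_I$ up to the multiplicative constant $G(e)$.

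For (6), by (4) write $\varrho_I=\varrho^+_I/\varrho^-_I$. If $\mu^-((a,e])<\infty$, the convergence argument of (1) extends to all of $(a,e]$ with $\mu^-$ in place of $\mu$, so $\varrho^-_I$ has a positive finite limit at $a$; the bounded monotone $\varrho^+_I$ then has a limit in $[0,\infty)$, and a BV extension of $\varrho_I$ to $[a,e]$ follows since the ratio of a bounded BV function by one bounded away from zero is BV. The symmetric argument handles $[e,b]$ under $\mu^+([e,b))<\infty$. For (7), the ``if'' direction applies (6) to both $\mu^\pm$, giving strictly positive limits for both $\varrho^\pm_I$ and hence $\varrho_I(a)>0$. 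For ``only if'', a positive cadlag function on $[a,e]$ whose values and left limits are all positive is bounded away from zero by compactness, so (5) gives $|\mu|((a,e])\leq C\cdot|\nu_{\varrho_I}|((a,e])<\infty$ since $\varrho_I$ is BV. The main obstacle will be the calculation in (5) showing the continuous part of $\nu_{\varrho_I}$ is exactly $2\varrho_I\,d\mu_\rc$, which requires a Stieltjes-type chain rule for cadlag BV functions separating the atomic and continuous contributions to $\log\varrho_I$; a secondary technicality is the product convergence in (1) when $|\mu_y|$ can approach (but never reach) $1$, handled by the threshold argument above.
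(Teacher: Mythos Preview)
Your proposal is correct and follows essentially the same route as the paper. The paper establishes absolute convergence of the product via the same threshold argument (finitely many atoms with $|\mu_y|>1/2$), derives local BV for $\varrho_I$ from the quotient $\varrho_I^+/\varrho_I^-$ just as you do, defers (5) to a reference (Le Gall) where you spell out the Stieltjes chain-rule computation explicitly, and proves (6) identically via the monotone bounded extensions of $\varrho_I^\pm$. Your argument for the necessity in (7)---bounding $\varrho_I+\varrho_I(\cdot-)$ below by compactness and applying (5) directly to get $|\mu|((a,e])\leq C\,|\nu_{\varrho_I}|((a,e])$---is slightly cleaner than the paper's, which treats the discrete and continuous parts of $\mu$ separately, but both are the same idea.
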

\begin{proof}
Fix $z\in I$ with $z>e$ and write $\{y\in (e,z]: \mu_y\neq 0\}=:\{y_n: n\geq 1\}$. We first show 
\begin{equation}\label{EQ5EYZ}
	\prod_{e<y\leq z}\frac{1+\mu_y}{1-\mu_y}=\prod_{n\geq 1} \frac{1+\mu_{y_n}}{1-\mu_{y_n}}=\prod_{n\geq 1}\left(1+ \frac{2\mu_{y_n}}{1-\mu_{y_n}}\right)
\end{equation}
is absolutely convergent. Note that $\sum_{n\geq 1}|\mu_{y_n}|<\infty$. Thus for some $N\in \bN$, $|\mu_{y_n}|<1/2$ for all $n>N$. It follows that 
\[
\begin{aligned}
\sum_{n\geq 1} \left|\frac{2\mu_{y_n}}{1-\mu_{y_n}} \right|&=\sum_{1\leq n\leq N} \left|\frac{2\mu_{y_n}}{1-\mu_{y_n}} \right|+\sum_{n>N} \left|\frac{2\mu_{y_n}}{1-\mu_{y_n}} \right| \\
&\leq \sum_{1\leq n\leq N} \left|\frac{2\mu_{y_n}}{1-\mu_{y_n}} \right|+4\sum_{n>N}|\mu_{y_n}| \\
&<\infty. 
\end{aligned}\]
This leads to the absolute convergence of \eqref{EQ5EYZ}. Similar convergence holds also for $z<e$. Hence $\varrho_I$ is well defined. Moreover, one can easily find that $\varrho_I$ is cadlag and $\varrho_I(z)$, $\varrho_I(z-)>0$ for all $z\in I$. Meanwhile, $\varrho_I^\pm$ is well defined and cadlag, and $\varrho^\pm_I(z)$, $\varrho^\pm_I(z-)>0$ for all $z\in I$ by a similar derivation. On the other hand, $\varrho_I^\pm$ is increasing and the forth assertion $\varrho_I=\varrho^+_I/\varrho^-_I$ is obvious. Particularly, $\varrho^\pm_I$ is locally of bounded variation on $I$ and thus so is $\varrho_I$, since $1/\varrho^-_I$ is bounded on any compact subinterval of $I$. The first four assertions are concluded. The fifth one can be deduced by a straightforward computation, see also \cite[Lemma~2.1]{LG84}.  

For the sixth assertion, we only consider the case concerning $a$. Another one can be proved similarly. Suppose $\mu^-((a,e])<\infty$. Then $$\prod_{a<y\leq e}\frac{1-\mu^-_y}{1+\mu^-_y}$$ is absolutely convergent and $\exp\{\mu_\rc^-((a,e])\}<\infty$. Thus $$0<\varrho^-_I(a):=\lim_{z\downarrow a}\varrho^-_I(z)<\infty.$$ Particularly, $1/\varrho^-_I$ is bounded and decreasing on $[a,e]$. If $\mu^+((a,e])<\infty$, one can also deduce that $0<\varrho^+_I(a):=\lim_{z\downarrow a}\varrho^+_I(z)<\infty$ and $\varrho^+_I$ is of bounded variation on $[a,e]$. This implies that \eqref{EQ5VIA} exists, $\varrho_I(a)=\varrho_I^+(a)/\varrho_I^-(a)>0$ and $\varrho_I$ is of bounded variation on $[a,e]$. If $\mu^+((a,e])=\infty$, then $\mu^+_\rc((a,e])=\infty$ or $\mu^+_\rd((a,e])=\infty$. The former case leads to $$\lim_{z\downarrow a}\exp\{-2\mu^+_\rc((z,e])\}=0$$ and the latter one implies 
\[
\lim_{z\downarrow a}\prod_{z<y\leq e}\frac{1-\mu^+_y}{1+\mu^+_y}=0. 
\]
Hence $\varrho^+_I(a)=0$ and $\varrho^+_I$ is still of bounded variation on $[a,e]$. Therefore, $\varrho_I=\varrho^+_I/\varrho^-_I$ is of bounded variation on $[a,e]$. 

The sufficiency of the last assertion is already indicated in the proof of the previous assertion. It suffices to show the necessity of $|\mu|((a,e])<\infty$. In fact, it follows from $\varrho_I(a)>0$ and the third assertion that for some constant $\delta>0$, $\varrho_I(z)\geq \delta$ and $\varrho_I(z-)\geq \delta$ for all $z\in (a,e]$. Note that for $a<z\leq e$, 
\[
	|\nu_{\varrho_I}|(\{z\})=|\varrho_I(z)-\varrho_I(z-)|=\left|\frac{2\mu_z}{1-\mu_z}\right|\cdot \varrho_I(z-)\geq |\mu_z|\delta.
\]
Since $\varrho_I$ is of bounded variation on $[a,e]$, we can conclude $\sum_{a<z\leq e}|\mu_z|<\infty$. Hence $|\mu_\rd|((a,e])<\infty$. This also implies that $\prod_{a<y\leq e}\frac{1-\mu_y}{1+\mu_y}$ is absolutely convergent and then 
\[
	z\mapsto F(z):=\varrho_I(z)\cdot \prod_{z<y\leq e}\frac{1+\mu_y}{1-\mu_y}
\]	
is of bounded variation on $[a,e]$. Clearly, $F(z)=\exp\{-2\mu_\rc((z,e]) \}$ for $z\in (a,e)$ and there exists a constant $\tilde{\delta}>1$ such that $1/\tilde{\delta}\leq F(z)\leq \tilde{\delta}$ for all $z\in [a,e]$. Then we have $\log F$ is of bounded variation on $[a,e]$. Therefore, $|\mu_\rc|((a,e])<\infty$. 
\end{proof}

\subsection{Existence of related general skew Brownian motions}


Now we move on to state the main result. 
Denote 
\[
	G:=\{z\in \bR: \exists \varepsilon>0, |\mu|((z-\varepsilon,z+\varepsilon))<\infty\}. 
\]	
Clearly, $G$ is open and can be written as a union of disjoint open intervals: 
\begin{equation}\label{EQ5UNI}
	G=\cup_{n\geq 1}I_n=\cup_{n\geq 1}(a_n,b_n).
\end{equation}
Then $|\mu|$ is Radon on $I_n=(a_n,b_n)$. Take a fixed point $e_n\in I_n$. When $I_n\cap \Xi=\emptyset$, denote the function $\varrho_{I_n}$ in \eqref{EQ5VIZ} with $I=I_n$ and $e=e_n$ by $\varrho_n$. Define a function $\varrho$ on $\cup_{n\geq 1}I_n$ by $\varrho:=\varrho_n$ on each $I_n$. The main result of this section is as follows. Note that the first condition thereof indicates $I_n\cap \Xi=\emptyset$ for all $n$ and $\varrho$ in the latter conditions is given as above.  

\begin{theorem}\label{THM58}
Assume (M0) and let $G$ be in \eqref{EQ5UNI}. Then there exists a general skew Brownian motion related to \eqref{EQ3YTT}, if and only if the following hold:
\begin{itemize}
\item[(1)] $\Xi^+=\{a_n: a_n>-\infty, |\mu|((a_n,e_n))<\infty, n\geq 1\}$ and $\Xi^-=\{b_n:b_n<\infty, |\mu|((e_n,b_n))<\infty, n\geq 1\}$;  
\item[(2)] $|\mu|(G^c\setminus \Xi)=0$ (and particularly $G^c$ is nowhere dense);
\item[(3)] When $a_n>-\infty$ (resp. $b_n<\infty$), $\int_{a_n}^{e_n}\varrho(y)dy<\infty$ (resp. $\int_{e_n}^{b_n}\varrho(y)dy<\infty$); 
\item[(4)] When $a_n>-\infty$ and $\int_{a_n}^{e_n}\frac{dy}{\varrho(y)}<\infty$ (resp. $b_n<\infty$ and $\int_{e_n}^{b_n}\frac{dy}{\varrho(y)}<\infty$), $\varrho|_{(a_n,e_n]}$ (resp. $\varrho|_{[e_n,b_n)}$) can be extended to a function of bounded variation on $[a_n,e_n]$ (resp. $[e_n,b_n]$);
\item[(5)] If a constant $L>0$ exists such that $(L,\infty)\cap G^c=\emptyset$ (resp. $(-\infty, -L)\cap G^c=\emptyset$), then 
\begin{equation}\label{EQ5LXV2}
	\int_L^\infty\frac{dx}{\varrho(x)}\int_L^x \varrho(y)dy=\infty,\quad \left(\text{resp. }\int_{-\infty}^{-L} \frac{dx}{\varrho(x)}\int_x^{-L} \varrho(y)dy=\infty \right).
\end{equation}
\end{itemize}
\end{theorem}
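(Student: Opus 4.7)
The plan is to handle both directions by exploiting the uniqueness statement Lemma~\ref{LM58}~(5), which forces the restriction of the density $\rho$ of any related general skew Brownian motion to agree (up to a multiplicative constant) with the explicit function $\varrho_n$ on each interval $I_n=(a_n,b_n)\subseteq G$, and by using Proposition~\ref{THM4} to encode conservativeness in terms of $\rho$. The overall strategy is to transfer questions about the measure $\mu$ into questions about the associated density and its scale/speed measure on each $I_n$, then glue together to recover the effective intervals $\{(\tI_k,\fs_k): k\geq 1\}$ and check (A), Corollary~\ref{THM5} and conservativeness.

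\textbf{Necessity.} Suppose $X$ is a general skew Brownian motion related to \eqref{EQ3YTT} with density $\rho$. Fix an effective interval $\tI_k$. Since $\rho|_{\tI_k}$ is locally of bounded variation and positive on $\mathring{\tI}_k$ with $1/\rho\in L^1_\mathrm{loc}(\mathring{\tI}_k)$, one shows using \eqref{EQ3MZN} that $|\mu|$ is Radon on every compact subset of $\mathring{\tI}_k$ and $|\mu_z|<1$ on $\mathring{\tI}_k\setminus Z_\rho$, forcing $\mathring{\tI}_k\subset G$ and $\mathring{\tI}_k\cap \Xi=\emptyset$. Combining over $k$, $\cup_k\mathring{\tI}_k\subset G$; since $\left(\cup_k\tI_k\right)^c$ has $\fm$-measure zero and $\rho=0$ outside $\cup_k\tI_k$, condition (2) follows (with $G^c$ nowhere dense because $S(\rho)$ is). Lemma~\ref{LM58}~(5) then gives $\rho|_{I_n}=c_n\varrho_n$ on each $I_n$ sitting inside some $\tI_k$. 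For (1): if $z\in\Xi^+$ then $\mu_z=1$ forces $\rho(z-)=0$ via \eqref{EQ3MZN}, but on $\mathring{\tI}_k$ we have $\rho(z-),\rho(z)>0$ by Lemma~\ref{LM58}~(3); hence $z$ is a left endpoint $\fa_k$ of an effective interval, and $\fa_k\in\tI_k$ gives $\fs_k(\fa_k)>-\infty$ together with an adjacent $I_n$ with $a_n=z$. The extendability of $\varrho_n$ to $[a_n,e_n]$ with $\varrho_n(a_n)>0$ (needed so that $\rho$ is locally BV at $\fa_k$) yields $|\mu|((a_n,e_n))<\infty$ by Lemma~\ref{LM58}~(7). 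The reverse inclusion is obtained by running the same chain in the opposite direction. Conditions (3) and (4) encode that the finite endpoints of $\tI_k$ are included in $\tI_k$ (speed measure integrability and scale function extendability), and condition (5) is exactly the non-explosion criterion for $X$ at infinity provided by Proposition~\ref{THM4}.

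\textbf{Sufficiency.} Assume (1)--(5). Define $\rho$ on $\bR$ by $\rho:=\varrho_n$ on each $I_n$, extending to $a_n$ (resp.\ $b_n$) using Lemma~\ref{LM58}~(7) whenever $a_n\in\Xi^+$ (resp.\ $b_n\in\Xi^-$), and $\rho:=0$ on the remainder of $G^c$. Condition (2) makes $G^c$ nowhere dense, so $\fm:=\rho\,dx$ is fully supported; condition (3) gives $\rho\in L^1_\mathrm{loc}(\bR)$; and $S(\rho)\subseteq G^c$ by Lemma~\ref{LM58}~(3), so $\rho$ vanishes a.e.\ on $S(\rho)$. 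Hence (A) holds and $(\EE,\FF)$ is well defined. Using (4), the effective intervals are obtained by gluing each $\langle a_n,b_n\rangle$ (with endpoints included according to whether the scale integrals $\int 1/\varrho$ converge) into maximal scale-connected chains; along each chain $\rho$ is locally of bounded variation by Lemma~\ref{LM58}~(1),(7). Corollary~\ref{THM5} then yields the semi-martingale decomposition, and conservativeness of the resulting diffusion follows from Proposition~\ref{THM4} together with (3)--(5). Finally, Lemma~\ref{LM58}~(5) guarantees $\nu_\rho/(\rho+\rho(\cdot-))=\mu|_{I_n}$ on each $I_n$; the atoms at $\Xi^\pm$ are reproduced exactly by the jumps of $\rho$ at the endpoints of the $I_n$'s thanks to (1), and $\mu$ has no further mass by (2). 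Hence $X$ is a general skew Brownian motion related to \eqref{EQ3YTT}.

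\textbf{Expected main obstacle.} The delicate step is the bookkeeping around the gluing of the $I_n$'s into effective intervals: one must verify that the piecewise-defined $\rho$ really is \emph{locally} of bounded variation across the junctions $a_n=b_m$ (and not merely on each $I_n$), and that the scale-connected components match the effective intervals produced by $S(\rho)^c$ as in Remark~\ref{RM2}. Conditions (3)--(4) are tailored precisely for this purpose; care is also needed for the remaining boundary scenarios (one-sided accessibility, infinite endpoints, and points in $\Xi$ that simultaneously equal $a_n$ for one interval and $b_m$ for an adjacent one), which is where the explicit classification in (1) becomes indispensable.
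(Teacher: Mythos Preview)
Your overall architecture matches the paper's: necessity via Lemma~\ref{LM58}~(5) forcing $\rho|_{I_n}=c_n\varrho_n$, sufficiency by building $\rho$ from the $\varrho_n$'s and checking (A), Theorem~\ref{THM3}/Corollary~\ref{THM5}, and Proposition~\ref{THM4}. However, both halves contain genuine gaps.

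\textbf{Necessity.} Your claims that $\mathring{\tI}_k\subset G$ and $\mathring{\tI}_k\cap\Xi=\emptyset$ are both false. In Example~\ref{EXA3} the process is irreducible (so $\mathring{\tI}_1=\bR$), yet $0\in\Xi^+$ sits in the interior; in Example~\ref{EXA1} the process is again irreducible but $0\in G^c$ because $|\mu|$ is not Radon near $0$. The underlying error is your appeal to Lemma~\ref{LM58}~(3): that lemma asserts positivity of $\varrho_I$ on the interval $I$ where it is defined, not positivity of $\rho$ and $\rho(\cdot-)$ on the whole effective interval $\tI_k$. An effective interval may contain many $I_n$'s glued together, with $\rho$ or $\rho(\cdot-)$ vanishing at the junctions; those junctions can lie in $\Xi$ or in $Z_\rho\subset G^c$. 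Consequently, your deduction that a point of $\Xi^+$ must be a left endpoint $\fa_k$ of an effective interval is wrong, and with it your derivation of (1). The paper avoids this by proving the local Lemma~\ref{LM11}: for each $z\in\Xi^+$ one only gets a \emph{one-sided} neighbourhood $(z,z+\varepsilon)$ with finite $|\mu|$, which pins $z$ down as some $a_n$ (not some $\fa_k$); and the characterisation $z\in G\Leftrightarrow z\in\cup_k\tI_k$ with $\rho(z),\rho(z-)>0$ is what replaces your incorrect inclusion.

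\textbf{Sufficiency.} You take $c_n\equiv 1$ and claim ``condition (3) gives $\rho\in L^1_{\mathrm{loc}}(\bR)$''. Condition (3) only bounds each $\int_{I_n}\varrho$; it says nothing about $\sum_{I_n\subset(-L,L)}\int_{I_n}\varrho$, which is what $\rho\in L^1_{\mathrm{loc}}$ requires. Since $\varrho_n(e_n)=1$ for every $n$, one can easily arrange $\mu$ so that infinitely many $I_n\subset(-L,L)$ each contribute $\int_{I_n}\varrho\geq\delta>0$, and then (A) fails. The same problem recurs for the local bounded-variation requirement across an effective interval containing infinitely many $I_n$: you need $\sum_n V_n<\infty$ (the paper's \eqref{EQ5NIN2}), which is not automatic. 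Likewise your conservativeness claim only covers the case handled by (5); when an unbounded effective interval contains infinitely many bounded $I_n$'s, a separate argument is needed. The paper's remedy is precisely to \emph{choose} the multiplicative constants $c_n$ small enough (see the end of \S\ref{SEC632}) so that \eqref{EQ5NINL}, \eqref{EQ5NIN2} and \eqref{EQ5LXR} all hold. Your ``expected main obstacle'' paragraph correctly flags the gluing as delicate, but the missing idea is that this gluing is controlled through the free parameters $c_n$, not through conditions (3)--(4) alone.
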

\begin{remark}
The last condition is used only for guaranteeing the conservativeness of the expected general skew Brownian motions. 
\end{remark}

Before the proof, we show some facts concerning the fourth condition in Theorem~\ref{THM58}. Lemma~\ref{LM58} tells us when $\mu^-((a_n,e_n])<\infty$, $\varrho|_{(a_n,e_n]}$ can be extended to a function of bounded variation on $[a_n,e_n]$. In the meanwhile, $\varrho(a_n)=\lim_{z\downarrow a_n}\varrho(z)=0$ whenever $\mu^+((a_n,e_n])=\infty$ and $\varrho(a_n)=\lim_{z\downarrow a_n}\varrho(z)>0$ whenever $\mu^+((a_n,e_n])<\infty$. For the case $\mu^-((a_n,e_n])=\infty$ and $\mu^+((a_n,e_n])<\infty$, one can easily conclude that $\lim_{z\downarrow a_n}\varrho(z)$ diverges to $\infty$ and thus $\varrho|_{(a_n,e_n]}$ cannot be extended to a function of bounded variation on $[a_n,e_n]$. Finally when $\mu^-((a_n,e_n])=\infty$ and $\mu^+((a_n,e_n])=\infty$, it is possible to find some examples where $\rho|_{(a_n,e_n]}$ can or cannot be extended to a function of bounded variation on $[a_n,e_n]$, see Example~\ref{EXA14}. Once the former situation occurs, we must have $\varrho(a_n)=\lim_{z\downarrow a_n}\varrho(z)=0$.  

\subsection{Proof of Theorem~\ref{THM58}}\label{SEC6}

\subsubsection{Necessity}

The lemma below is very useful for proving the necessity of Theorem~\ref{THM58}. 

\begin{lemma}\label{LM11}
Assume (M0) and there is a general skew Brownian motion $X$ with the density function $\rho$ related to \eqref{EQ3YTT}. Let $\{\tI_k=\langle \fa_k,\fb_k\rangle:k\geq 1\}$ be the set of effective intervals of $X$. Then the following hold:
\begin{itemize}
\item[(1)] If $I=(a,b)$ is an open interval such that $|\mu|(I)<\infty$, then $I\subset \tI_k$ for some $k$ and $I\cap \Xi=\emptyset$. 
\item[(2)] $z\in G$ if and only if $z\in \cup_{k\geq 1}\tI_k$ and $\rho(z),\rho(z-)>0$. 
\item[(3)] For any $z\in \Xi^+$ (resp. $z\in \Xi^-$), there exists a constant $\varepsilon>0$ such that 
\[
	|\mu|((z,z+\varepsilon))<\infty, \quad (\text{resp. } |\mu|((z-\varepsilon,z))<\infty)
\]
and 
\[
	(z,z+\varepsilon)\cap \Xi=\emptyset,\quad (\text{resp. } (z-\varepsilon, z)\cap \Xi=\emptyset).
\]
\end{itemize} 
\end{lemma}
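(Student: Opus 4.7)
The central identity throughout is \eqref{EQ3MZN}, $\mu(dz)=\nu_\rho(dz)/(\rho(z)+\rho(z-))$, which makes $\Xi$ the set of points where exactly one of $\rho(z),\rho(z-)$ vanishes. I would prove (1) first and then deduce (2) and (3) from it. For (1), fix $I=(a,b)$ with $|\mu|(I)<\infty$. The set $\Xi\cap I$ must be finite, since each of its points contributes~$1$ to $|\mu|(I)$. The crux of (1) is then to establish pointwise positivity: $\rho(z_0)>0$ and $\rho(z_0-)>0$ for every $z_0\in I$. Given $z_0\in I$, choose $\delta>0$ small enough that $(z_0-\delta,z_0+\delta)\subset I$ meets $\Xi$ only possibly at $z_0$ itself. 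On each of the one-sided punctured neighborhoods $J^+=(z_0,z_0+\delta)$ and $J^-=(z_0-\delta,z_0)$ one has $|\mu|(J^\pm)<\infty$ and $|\mu_z|<1$ for every $z\in J^\pm$, so Lemma~\ref{LM58} applies: the function $\varrho_{J^\pm}$ given by \eqref{EQ5VIZ} is cadlag, positive, and locally BV on $J^\pm$. The uniqueness clause Lemma~\ref{LM58}(5) identifies $\rho$ with $c^\pm\varrho_{J^\pm}$ a.e.\ on $J^\pm$ for some positive constants $c^\pm$, since condition~(A) rules out open subintervals where $\rho\equiv 0$ and therefore forces the effective intervals inside $J^\pm$ to abut at shared endpoints rather than being separated by positive-measure gaps; hence $\rho(z)=c^\pm\varrho_{J^\pm}(z)$ at every point of $J^\pm$ where the cadlag representative of $\rho$ is defined and positive. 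Lemma~\ref{LM58}(7) applied at the endpoint $z_0$ of $J^\pm$ then gives $\lim_{z\to z_0,\,z\in J^\pm}\varrho_{J^\pm}(z)>0$, so the one-sided limits of $\rho$ at $z_0$ through $J^\pm$ are strictly positive and $\rho(z_0),\rho(z_0-)>0$.

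With pointwise positivity in hand, cadlag-ness on each $\tI_k$ makes $\rho$ locally bounded below on $I$, so $1/\rho\in L^1_{\mathrm{loc}}(I)$ and $I\cap S(\rho)=\emptyset$; by Lemma~\ref{LM32}(1), $I\subset\cup_k(\fa_k,\fb_k)$, and connectedness of $I$ together with disjointness of the $(\fa_k,\fb_k)$'s forces $I\subset\tI_k$ for a single $k$. Combining positivity with \eqref{EQ3MZN} yields $|\mu(\{z\})|<1$ on $I$, i.e.\ $I\cap\Xi=\emptyset$. For (2), the direction $(\Rightarrow)$ is immediate from (1) and the pointwise positivity proved along the way. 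For $(\Leftarrow)$, let $z\in\tI_k$ with $\rho(z),\rho(z-)>0$; cadlag-ness of $\rho$ on $\tI_k$ from Theorem~\ref{THM3} yields $\varepsilon,c>0$ with $\rho+\rho(\cdot-)\ge c$ on a neighborhood $U\subset\tI_k$ of $z$, and local finiteness of $\nu_\rho$ on $\tI_k$ then forces $|\mu|$ to be Radon on $U$ via \eqref{EQ3MZN}, so $z\in G$. For (3), take $z\in\Xi^+$; by \eqref{EQ3MZN}, $\rho(z-)=0$ and $\rho(z)>0$, so $z$ lies in some $\tI_k$, and right-continuity of $\rho$ yields $\rho\ge\rho(z)/2$ on $[z,z+\varepsilon)\subset\tI_k$ for small $\varepsilon>0$. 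The same Radon argument as in (2)$(\Leftarrow)$ delivers $|\mu|((z,z+\varepsilon))<\infty$, and $(z,z+\varepsilon)\cap\Xi=\emptyset$ follows from applying (1) to this subinterval; the $\Xi^-$ case is symmetric.

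The main obstacle is the pointwise-positivity step in (1): propagating the identification $\rho=c^\pm\varrho_{J^\pm}$ from one effective subinterval of $J^\pm$ to the next through the (measure-zero but possibly topologically intricate) set $S(\rho)\cap J^\pm$. Here one must lean essentially on condition~(A) to rule out any open sub-interval on which $\rho\equiv 0$, forcing the effective intervals meeting $J^\pm$ to abut at shared endpoints and letting the cadlag identity pass across them. Once this identification is in place, Lemma~\ref{LM58}(7) does the rest, and the remaining parts of the statement are either immediate from (1) or mechanical applications of Theorem~\ref{THM3} together with \eqref{EQ3MZN}.
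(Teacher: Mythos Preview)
Your treatment of (2) and (3) matches the paper's and is fine once (1) is in hand. The gap is in your proof of (1), specifically the ``abutting'' step.

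You assert that condition~(A) forces the effective intervals meeting $J^\pm$ to abut at shared endpoints. But (A) only says that $\rho$ cannot vanish identically on a nonempty open subinterval; it does not rule out $S(\rho)\cap J^\pm$ being a Cantor-type set (nowhere dense, Lebesgue-null, yet perfect). In that scenario the effective intervals meeting $J^\pm$ would \emph{not} share endpoints, and there is no mechanism to propagate the identification $\rho=c\,\varrho_{J^\pm}$ across the gaps with a single constant $c$. The uniqueness clause of Lemma~\ref{LM58}(5) applies only on an interval where $\rho$ is already known to be cadlag and locally of bounded variation, and you only know this on each $\tI_k$ separately. So your identification $\rho=c^\pm\varrho_{J^\pm}$ on all of $J^\pm$ presupposes that $J^\pm$ lies in a single effective interval---which is precisely the content of (1).

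The paper breaks this circularity by arguing directly, by contradiction, that $I$ cannot meet two effective intervals. If $I$ meets both $(\fa_1,\fb_1)$ and $(\fa_2,\fb_2)$ with $\fa_1<\fa_2$, then $\fb_1,\fa_2\in I$. Working on $J_i:=I\cap(\fa_i,\fb_i)$, which \emph{is} inside a single $\tI_i$, one first shows $J_i\cap\Xi=\emptyset$ and then applies Lemma~\ref{LM58}(5),(7) there to get $\rho(\fb_1-)>0$ and $\rho(\fa_2)>0$. Hence $1/\rho$ is locally integrable up to these endpoints, so $\fb_1\in\tI_1$, $\fa_2\in\tI_2$; the canonical convention then gives $\rho(\fb_1)=\rho(\fa_2-)=0$, whence $\fb_1\in\Xi^-$ and $\fa_2\in\Xi^+$. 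Disjointness forces $\fb_1<\fa_2$, and since $S(\rho)$ is nowhere dense there is another effective interval strictly between them; iterating produces infinitely many closed effective intervals inside $I$, each contributing two points of $\Xi$, so $|\mu|(I)=\infty$, a contradiction. The key point is that every application of Lemma~\ref{LM58}(5) here is on a subinterval already known to sit inside a single $\tI_k$, so no circularity arises. Once $I\subset\tI_k$ is established, your positivity argument and the conclusion $I\cap\Xi=\emptyset$ go through exactly as you wrote.
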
 
\begin{proof}
\begin{itemize}
\item[(1)] We first show $I\subset \tI_k$ for some $k$. 
Arguing by contradiction, suppose $\fa_1<\fa_2$ and $J_1:=I\cap (\fa_1,\fb_1), J_2:=I\cap (\fa_2,\fb_2)$ are not empty. Write $J_1=(\fc_1,\fd_1), J_2=(\fc_2,\fd_2)$. Then $\fd_1=\fb_1$ and $\fc_2=\fa_2$. Note that $J_i\cap \Xi=\emptyset$. Indeed, if $z\in J_i\cap \Xi$, then there exists a constant $\varepsilon>0$ such that $(z-\varepsilon,z+\varepsilon)\subset J_i$ and $(z-\varepsilon,z+\varepsilon)\cap \Xi=\{z\}$, since otherwise we have $|\mu|(J_i)=\infty$ leading to contradiction. Hence $\rho=c_1\varrho_{(z-\varepsilon,z)}$ on $(z-\varepsilon,z)$ and $\rho=c_2\varrho_{(z,z+\varepsilon)}$ on $(z,z+\varepsilon)$ for two constants $c_1$ and $c_2$ by Lemma~\ref{LM58}~(5). But $|\mu|((z-\varepsilon,z+\varepsilon))<\infty$ indicates $\rho(z-)>0$ and $\rho(z)>0$ by Lemma~\ref{LM58}~(7). Thus $|\mu(\{z\})|<1$ which contradicts with $z\in \Xi$. By applying Lemma~\ref{LM58}~(5) and (7) to $J_i$, it follows from $J_i\cap \Xi=\emptyset$ and $|\mu|(J_i)<\infty$ that $\rho$ is of bounded variation on $[\fe_1,\fb_1]$ (resp. $[\fa_2, \fe_2]$) with $\rho(\fb_1-)>0$ (resp. $\rho(\fa_2)>0$). As a result, $\int_{\fe_1}^{\fb_1}\frac{dy}{\rho(y)}+\int_{\fa_2}^{\fe_2}\frac{dy}{\rho(y)}<\infty$, which implies $\fb_1\in \tI_1$, $\mu(\{\fb_1\})=-1$ and $\fa_2\in \tI_2$, $\mu(\{\fa_1\})=1$. Note that $\tI_1\cap \tI_2=\emptyset$ tells us $\fb_1<\fa_2$. Furthermore, we can obtain that any $\tI_k\subset [\fb_1,\fa_2]$ is closed and $\mu(\{\fa_k\}=1, \mu(\{\fb_k\})=-1$ by mimicking the argument above. Particularly, there are infinite effective intervals between $\fb_1$ and $\fa_2$. This leads to the contradiction $|\mu|(I)=\infty$. Eventually we can conclude that $I\subset \tI_k$ for some $k$. Next, by the same argument for proving $J_i\cap \Xi=\emptyset$, one can also deduce that $I\cap \Xi=\emptyset$.
\item[(2)] The necessity is clear since the first assertion implies $z\in \tI_k$ for some $k$ and $z\notin \Xi$. To the sufficiency, note that $z\in (\fa_k,\fb_k)$ for some $k$ since $\rho(z),\rho(z-)>0$. Then there exist two constants $\varepsilon>0$ and $\delta_\varepsilon>0$ such that $\rho$ is of bounded variation on $[z-\varepsilon,z+\varepsilon]$ ($\subset (\fa_k,\fb_k)$) and $\rho(y),\rho(y-)\geq \delta_\varepsilon$ for all $y\in (z-\varepsilon,z+\varepsilon)$. From Lemma~\ref{COR10} we obtain
\[
	|\mu|((z-\varepsilon,z+\varepsilon))= \int_{(z-\varepsilon,z+\varepsilon)} \frac{|\nu_\rho|(y)}{\rho(y)+\rho(y-)}\leq \frac{1}{2\delta_\varepsilon} |\nu_\rho|((z-\varepsilon,z+\varepsilon))<\infty.
\]
\item[(3)] For $z\in \Xi^+$, we have $\rho(z)>0$ and $\rho(z-)=0$. Mimicking the argument for the second assertion, one can obtain that $|\mu|((z,z+\varepsilon))<\infty$ for some constant $\varepsilon>0$. Then $(z,z+\varepsilon)\cap \Xi=\emptyset$ is implied by the first assertion. 
\end{itemize}  
That completes the proof. 
\end{proof} 

Now we have a position to prove the necessity of Theorem~\ref{THM58}. 

\begin{proof}[Proof of the necessity of Theorem~\ref{THM58}]
Let $X$ be a general skew Brownian motion with the density function $\rho$ related to \eqref{EQ3YTT}. The set of its effective intervals is denoted by $\{\tI_k=\langle \fa_k,\fb_k\rangle:k\geq 1\}$. The first assertion of Lemma~\ref{LM11} particularly indicates $I_n\subset \tI_k$ for some $k$ and $I_n\cap \Xi=\emptyset$ for all $I_n$ in \eqref{EQ5UNI}. Then by applying Lemma~\ref{LM58}~(5), there is a constant $c_n>0$ such that $\rho=c_n\varrho$ on $I_n$. 
\begin{itemize}
\item[(1)] Clearly	 $\Xi\subset G^c$. Take $z\in \Xi^+$. By Lemma~\ref{LM11}, a constant $\varepsilon>0$ exists such that $(z,z+\varepsilon)\subset G$ but $z\notin G$. We have $z=a_n$ for some $n$ and $|\mu|((a_n,e_n))<\infty$. To the contrary, let $a_n>-\infty$ with $|\mu|((a_n,e_n))<\infty$. Arguing by contradiction, suppose $a_n\notin \Xi^+$. Lemma~\ref{LM58}~(7) tells us $\rho$ is of bounded variation on $[a_n,e_n]$ and $\rho(a_n)>0$. Particularly, $[a_n,e_n]\subset \tI_k$. Since $a_n\notin \Xi^+$, we must have $\rho(a_n-)>0$ and $\rho$ is of bounded variation on $[a_n-\tilde{\varepsilon},a_n+\tilde{\varepsilon}]\subset \tI_k$ for a constant $\tilde{\varepsilon}>0$. By taking a smaller constant $\epsilon>0$, one can obtain that $\rho(z),\rho(z-)>\frac{1}{2}(\rho(a_n)\wedge \rho(a_n-))$ for any $z\in (a_n-\epsilon,a_n+\epsilon)$.  It follows that 
\[
\begin{aligned}
	|\mu|((a_n-\epsilon,a_n+\epsilon))&= \int_{(a_n-\epsilon,a_n+\epsilon)} \frac{|\nu_\rho|(y)}{\rho(y)+\rho(y-)} \\ 
	&\leq \frac{1}{\rho(a_n)\wedge \rho(a_n-)} |\nu_\rho|((a_n-\epsilon,a_n+\epsilon))<\infty,
\end{aligned}
\]
 which contradicts with $a_n\notin G$. The expression of $\Xi^-$ can be deduced similarly.
\item[(2)] Set $N:=\left(\cup_{k\geq 1}\tI_k\right)^c$ and $Z_\rho:=\{z\in \cup_{k\geq 1}\tI_k: \rho(z)=\rho(z-)=0\}$. Note that $G^c=\Xi\cup Z_\rho\cup N$ and $|\mu|(Z_\rho\cup N)=0$ by Lemma~\ref{COR10}. Thus we have $|\mu|(G^c\setminus \Xi)\leq |\mu|(Z_\rho\cup N)=0$. This implies $G^c$ is nowhere dense. Indeed, if $J\subset G^c$ is a non-empty open interval, then $J\cap \Xi=\emptyset$ by the first assertion and hence $|\mu|(J)=0$. As a result, $J\subset G$ by the definition of $G$, which leads to contradiction. 
\item[(3)] When $a_n>-\infty$, we have $\int_{a_n}^{e_n}\rho(y)dy<\infty$ since $\rho\in L^1_\mathrm{loc}(\bR)$.  It suffices to note that $\rho=c_n\varrho$ on $I_n$. 
\item[(4)] Let $a_n$ be such an endpoint. Then $\int_{a_n}^{e_n}\frac{dy}{\rho(y)}<\infty$ implies $[a_n,e_n]\subset \tI_k$. Particularly, $\rho$ is of bounded variation on $[a_n,e_n]$. Since $\varrho_n=\rho/c_n$ on $I_n$,  it follows that $\varrho_n$ can be extended to a function of bounded variation on $[a_n,e_n]$. 
\item[(5)] When such $L$ exists, it holds $(L,\infty)\subset I_n\subset \tI_k$ or $(-\infty, -L)\subset I_n\subset \tI_k$ for some $n,k$. Then \eqref{EQ5LXV2} is a consequence of the conservativeness of $X$ by Proposition~\ref{THM4} and $\rho=c_n\varrho$ on $I_n$. 
\end{itemize}
That completes the proof.
\end{proof}
\begin{remark}\label{RM67}
Let us summarize the description of $\rho$. It is already indicated that $\rho=c_n\varrho$ on $I_n$. Note that $G^c=\Xi\cup Z_\rho\cup N$ where $N=\left(\cup_{k\geq 1}\tI_k\right)^c$ and $Z_\rho=\{z\in \cup_{k\geq 1}\tI_k: \rho(z)=\rho(z-)=0\}$. The $\fm$-polar set $N$ makes no sense and without loss of generality, one can impose $\rho(z)=0$ for $z\in N$. Every point $z$ in $\Xi^+$ (resp. $\Xi^-$) is the left (right) endpoint of some $I_n$ and it must hold 
\[
	\rho(z)=\lim_{y\downarrow a_n}c_n\varrho(y)>0, \quad \left(\rho(z-)=\lim_{y\uparrow b_n}c_n\varrho(y)>0 \right)
\] 
and $\rho(z-)=0$ (resp. $\rho(z)=0$). Finally for any $z\in Z_\rho$, we have $\rho(z)=\rho(z-)=0$. 
\end{remark}

\subsubsection{Sufficiency}\label{SEC632}

From now on we assume the five conditions in Theorem~\ref{THM58} are satisfied. 
To show the existence of related general skew Brownian motions, it suffices to construct a density function $\rho$ satisfying all the conditions in Definition~\ref{DEF5}. Note that $\rho$ must be given as follows by Remark~\ref{RM67}:
\begin{description}
\item[(i)] For all $n\geq 1$, there is a constant $c_n>0$ such that $\rho=c_n\varrho$ on $I_n$;
\item[(ii)] For $a_n\in \Xi^+$, $\rho(a_n):=\lim_{z\downarrow a_n}c_n\varrho(z)$ (the existence of this limit is guaranteed by Lemma~\ref{LM58}~(7) and the first condition);
\item[(iii)] $\rho(z):=0$ for $z\in G^c\setminus \Xi^+$.
\end{description}
So the task is to find a suitable set of positive constants $\{c_n: n\geq 1\}$. To this end, we analyse all conditions required by Definition~\ref{DEF5} as follows.

(a) \emph{The basic assumption (A) in \S\ref{SEC3}}. For any bounded $I_n$, denote $A_n:=\int_{I_n}\varrho(y)dy$. It follows from the third condition that $A_n$ is finite. Then the following lemma characterizes this assumption. 

\begin{lemma}
The assumption (A) holds, if and only if for any $L>0$, 
\begin{equation}\label{EQ5NINL}
	\sum_{n: I_n\subset (-L, L)} c_nA_n<\infty. 
\end{equation}
\end{lemma}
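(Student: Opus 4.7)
The plan is to decompose assumption (A) into its three constituents---local integrability $\rho\in L^1_{\mathrm{loc}}(\bR)$, the positivity condition $\int_U\rho\,dx>0$ for every non-empty open $U$, and vanishing a.e.\ on the singular set $S(\rho)$---and verify each one against the explicit description of $\rho$ given by (i)--(iii). Two of the three constituents will turn out to be automatic consequences of the standing hypotheses of Theorem~\ref{THM58}, reducing the whole statement to the equivalence between local integrability and \eqref{EQ5NINL}.

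The first step disposes of the positivity and singular-set conditions. Since $c_n>0$ and the second condition of Theorem~\ref{THM58} forces $G^c$ to be nowhere dense, any non-empty open $U\subset\bR$ meets some $I_n$ in a non-empty open subinterval on which $\rho=c_n\varrho>0$ by Lemma~\ref{LM58}(3); hence $\int_U\rho>0$. For the singular set, (iii) gives $\rho\equiv 0$ on $G^c\setminus\Xi^+$ and $\Xi^+$ is countable, so $\rho=0$ a.e.\ on $G^c$; it therefore suffices to show $G\cap S(\rho)=\emptyset$. For $x\in I_n$, pick $\varepsilon>0$ with $[x-\varepsilon,x+\varepsilon]\subset I_n$; Lemma~\ref{LM58}(3) supplies a constant $c_\varepsilon>0$ with $\varrho\geq 1/c_\varepsilon$ on this compact interval, so $\int_{x-\varepsilon}^{x+\varepsilon}\rho(y)^{-1}dy<\infty$ and $x\notin S(\rho)$.

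The core of the argument is therefore to show that $\rho\in L^1_{\mathrm{loc}}(\bR)$ is equivalent to \eqref{EQ5NINL}. Fix $L>0$. Because $\rho=0$ off $G\cup\Xi^+$ and $\Xi^+$ is countable,
\[
\int_{-L}^L\rho(y)\,dy=\sum_{n:\,I_n\cap(-L,L)\neq\emptyset} c_n\int_{I_n\cap(-L,L)}\varrho(y)\,dy.
\]
The $I_n$ being pairwise disjoint open intervals, at most two of them meet $(-L,L)$ without being contained in it, namely those straddling $-L$ or $L$. Each such boundary integral is finite: on any compact subinterval of $I_n$ the cadlag function $\varrho$ is bounded (Lemma~\ref{LM58}(3)), while condition (3) of Theorem~\ref{THM58} controls the piece near a finite endpoint $a_n$ or $b_n$ lying inside $[-L,L]$. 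Writing the total contribution of these two intervals as $R_L<\infty$,
\[
\int_{-L}^L\rho(y)\,dy \;=\; \sum_{n:\,I_n\subset(-L,L)} c_nA_n \;+\; R_L,
\]
which gives the equivalence of $\int_{-L}^L\rho<\infty$ with \eqref{EQ5NINL}.

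No step is a genuine obstacle; the whole argument is bookkeeping against the construction (i)--(iii), the definitions of $G$, $A_n$, and Lemma~\ref{LM58}. The only mildly delicate point is the boundary-term estimate, where condition (3) of Theorem~\ref{THM58} must be invoked at precisely those finite endpoints $a_n$ or $b_n$ at which $\varrho$ could fail to be integrable.
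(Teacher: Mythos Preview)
Your proof is correct and follows essentially the same approach as the paper's. Both arguments reduce (A) to its three constituents, show that positivity and the singular-set condition are automatic from the construction of $\rho$ and the nowhere-denseness of $G^c$, and identify \eqref{EQ5NINL} with $\rho\in L^1_{\mathrm{loc}}(\bR)$. The only difference is expository: the paper simply asserts the equivalence of \eqref{EQ5NINL} with local integrability, whereas you spell out the boundary-term bookkeeping (at most two $I_n$ straddling $\pm L$, each contributing a finite amount by Lemma~\ref{LM58}(3) on compact subintervals and condition~(3) of Theorem~\ref{THM58} near finite endpoints).
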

\begin{proof}
Note that \eqref{EQ5NINL} is equivalent to $\rho\in L^1_\mathrm{loc}(\bR)$. Hence the necessity is clear. To prove the sufficiency, since $\rho(z)=c_n\varrho(z)>0$ for $z\in I_n$ and $G^c$ is nowhere dense by the second condition, it follows that $\int_U\rho(z)dz>0$ for any non-empty open set $U\subset \bR$, which leads to \eqref{EQ2RLR}. Moreover, one can obtain $G\subset S(\rho)^c$ by Lemma~\ref{LM58}~(3) and thus $S(\rho)\subset G^c$. But $\rho\equiv 0$ on $G^c\setminus \Xi^+$ and $\Xi^+$ is countable by the first condition. We have $\rho=0$ a.e. on $S(\rho)\subset G^c$. Eventually (A) holds. That completes the proof. 
\end{proof}

(b) \emph{Effective intervals $\{(\tI_k,\fs_k):k\geq 1\}$ of $X$}. No extra conditions are needed for the existence of effective intervals. Instead, we should derive their expression by means of $\{I_n: n\geq 1\}$ and $\{c_n: n\geq 1\}$. To accomplish it, we repeat a conception introduced in \cite{LY172}. 

\begin{definition}
Given the density function $\rho$, we say $I_i$ and $I_j$ are scale-connected or $I_i$ is scale-connected to $I_j$, if provided that $\int_{(e_i, e_j)}\frac{dy}{\rho(y)}<\infty$, where $(e_i, e_j)$ is the open interval ended by $e_i$ and $e_j$ no matter which is bigger. 
\end{definition}
\begin{remark}
Denote $B_n:=\int_{I_n}\frac{dy}{\varrho(y)}$, $B^\rr_n:=\int_{e_n}^{b_n}\frac{dy}{\varrho(y)}$ and $B^\rl_n:=\int_{a_n}^{e_n}\frac{dy}{\varrho(y)}$ for all $n$. Assume without loss of generality $e_i<e_j$. The scale-connection between $I_i$ and $I_j$ means $(e_i,e_j)\cap G^c$ is of zero Lebesgue measure, and 
\begin{equation}\label{EQ5NEE}
	\sum_{n: I_n\subset (e_i,e_j)} \frac{B_n}{c_n}+\frac{B_i^\rr}{c_i}+\frac{B_j^\rl}{c_j}<\infty. 
\end{equation}
\end{remark}

Roughly speaking, $\{\tI_k:k\geq 1\}$ is obtained by gluing all scale-connected intervals in $\{I_n: n\geq 1\}$ (cf. \cite[\S3.3]{LY172}). The details are explained as follows. The scale-connection is an equivalent relation for the intervals in $\{I_n:n\geq 1\}$. Denote all its induced equivalence classes by $\{\fI_k:k\geq 1\}$, where $\fI_k\subset \{I_n: n\geq 1\}$ contains mutually scale-connected intervals. Note that if $I_i$ is scale-connected to $I_j$, then all $I_n$ located between $I_i$ and $I_j$ must be scale-connected to them. Hence $\fJ_k$ looks like a ``connected" cluster of intervals. Set 
\[
	\fa_k:=\inf\{x\in J: J\in \fI_k\},\quad \fb_k:=\sup\{x\in J: J\in \fI_k\}. 
\]
Take a fixed point $\fe_k\in (\fa_k,\fb_k)$ and write 
\begin{equation}\label{EQ5SKX}
	\fs_k(x):=\int_{\fe_k}^x\frac{dx}{\rho(x)},\quad x\in (\fa_k,\fb_k).
\end{equation}
Then $\fs_k$ is absolutely continuous and strictly increasing on $(\fa_k,\fb_k)$. Define 
\begin{equation}\label{EQ5IKA}
	\tI_k:=\langle \fa_k,\fb_k\rangle,
\end{equation}
where $\fa_k\in \tI_k$ (resp. $\fb_k\in \tI_k$) if and only if $\fa_k+\lim_{x\downarrow \fa_k}\fs_k(x)>-\infty$ (resp. $\fb_k+\lim_{x\uparrow \fb_k}\fs_k(x)<\infty$. Eventually we obtain the following. 

\begin{lemma}
Let $\fs_k$ and $\tI_k$ be in \eqref{EQ5SKX} and \eqref{EQ5IKA}. Then $\{(\tI_k,\fs_k): k\geq 1\}$ is the set of effective intervals. 
\end{lemma}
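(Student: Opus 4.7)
The plan is to reduce to Lemma~\ref{LM32} and Remark~\ref{RM2}, which describe the effective intervals of the Dirichlet form \eqref{EQ2DEC} as follows: the open effective intervals are the connected components of $S(\rho)^c$, the scale function on each is $\int_{\fe_k}^{\cdot} dy/\rho(y)$, and an endpoint is included exactly when it is finite and the scale function has a finite limit there. My task therefore splits into (a) showing the open intervals $(\fa_k,\fb_k)$ built via scale-connection are precisely the connected components of $S(\rho)^c$, and (b) checking that \eqref{EQ5SKX} and \eqref{EQ5IKA} match the description in Remark~\ref{RM2}.

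For (a), I would first observe that each $I_n \subset S(\rho)^c$: since $\rho = c_n \varrho$ on $I_n$ and $\varrho$ is bounded below on every compact subinterval by Lemma~\ref{LM58}(3), the function $1/\rho$ is locally integrable on $I_n$. The central claim is that $I_i$ and $I_j$ share a connected component of $S(\rho)^c$ if and only if they are scale-connected. One direction is straightforward: if $\int_{e_i}^{e_j} 1/\rho < \infty$, then every $x \in [e_i, e_j]$ has $\int_{x-\varepsilon}^{x+\varepsilon} 1/\rho < \infty$ for small $\varepsilon$, whence $[e_i, e_j] \subset S(\rho)^c$ and $I_i, I_j$ lie in a common component. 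The reverse direction will be the main obstacle: if $I_i, I_j$ share a component $J$, then $[e_i, e_j] \subset J \subset S(\rho)^c$, and at each point of $[e_i, e_j]$ there is an open neighborhood on which $\int 1/\rho$ is finite; extracting a finite subcover by compactness and applying subadditivity should yield $\int_{e_i}^{e_j} 1/\rho < \infty$.

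This equivalence gives a bijection between the classes $\fI_k$ and the connected components $J_k$ of $S(\rho)^c$, and it remains to verify $\inf J_k = \fa_k$ and $\sup J_k = \fb_k$. The inclusion $\bigcup_{I_n \in \fI_k} I_n \subset J_k$ yields $\inf J_k \leq \fa_k$; conversely any $x \in J_k$ admits $\varepsilon > 0$ with $\int_{x-\varepsilon}^{x+\varepsilon} 1/\rho < \infty$, forcing $\rho > 0$ a.e.\ on $(x-\varepsilon, x+\varepsilon)$, so $G$ has full Lebesgue measure there and $x$ is an accumulation point of intervals in $\fI_k$, giving $\fa_k \leq x$ and then $\fa_k \leq \inf J_k$. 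For (b), \eqref{EQ5SKX} is literally the scale function of Remark~\ref{RM2}, and the criterion $\fa_k + \lim_{x \downarrow \fa_k} \fs_k(x) > -\infty$ is exactly the conjunction ``$\fa_k$ finite and $\fs_k(\fa_k) > -\infty$'' of Remark~\ref{RM2}; similarly for $\fb_k$. Lemma~\ref{LM32} then identifies $\{(\tI_k, \fs_k): k \geq 1\}$ with the set of effective intervals of $X$.
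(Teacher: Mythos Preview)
Your approach is correct and genuinely different from the paper's. The paper argues abstractly: it first checks that the $\tI_k$ are disjoint with $\fs_k$ adapted, then invokes the representation theorem (Theorem~\ref{THM0}) to build a Dirichlet form $(\tilde\EE,\tilde\FF)$ having $\{(\tI_k,\fs_k)\}$ as its effective intervals, verifies that $C_c^\infty(\bR)\subset\tilde\FF$ with $\tilde\EE=\EE$ on $C_c^\infty(\bR)$, and finally appeals to an external result \cite[Theorem~3.7]{LY172} to conclude that $C_c^\infty(\bR)$ is a core of $(\tilde\EE,\tilde\FF)$, whence $(\tilde\EE,\tilde\FF)=(\EE,\FF)$. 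You instead work from the concrete description in Lemma~\ref{LM32} and Remark~\ref{RM2}: you identify the connected components of $S(\rho)^c$ directly with the $(\fa_k,\fb_k)$ by showing that two intervals $I_i,I_j$ lie in the same component iff they are scale-connected, using a compactness/finite-subcover argument and the fact that $\rho$ vanishes on $G^c\setminus\Xi^+$. Your route is more hands-on and avoids the external citation, at the cost of a slightly more delicate endpoint argument (your claim that $\fa_k\le x$ for every $x\in J_k$ needs the observation that the $\varepsilon$ in the definition of $S(\rho)^c$ can be taken arbitrarily small, which you use implicitly). The paper's route is cleaner in that it never touches $S(\rho)$ explicitly, but relies on machinery you may prefer to bypass.
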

\begin{proof}
It is easy to verify that $\tI_k$ are mutually disjoint and $\fs_k$ is adapted to $\tI_k$ (see the definition of adaptedness in \S\ref{SEC21}). Then Theorem~\ref{THM2} tells us there is a Dirichlet form $(\tilde{\EE},\tilde{\FF})$ whose effective intervals are $\{(\tI_k,\fs_k):k\geq 1\}$. It suffices to show $(\EE,\FF)=(\tilde{\EE},\tilde{\FF})$. Indeed, by the representation of $(\tilde{\EE},\tilde{\FF})$ in Theorem~\ref{THM2}, one can deduce that $C_c^\infty(\bR)\subset \tilde{\FF}$ and $\tilde{\EE}(f,g)=\frac{1}{2}\int_\bR f'(x)g'(x)\rho(x)dx$ for $f,g\in C_c^\infty(\bR)$. In other words, $\tilde{\EE}(f,g)=\EE(f,g)$. Then applying \cite[Theorem~3.7]{LY172} to $(\tilde{\EE},\tilde{\FF})$, one can conclude that $C_c^\infty(\bR)$ is a core of $(\tilde{\EE},\tilde{\FF})$. Since $(\EE,\FF)$ is obtained by the closure of \eqref{EQ2EFG}, we have $(\EE,\FF)=(\tilde{\EE},\tilde{\FF})$. That completes the proof. 
\end{proof}

(c) \emph{$\rho$ is locally of bounded variation on each $\tI_k$}. Consider $I_n=(a_n,b_n)\subset \tI_k$ for some $k$. When $a_n\in\tI_k$ (resp. $b_n\in \tI_k$), it follows from the forth condition that $\varrho_n$ can be extended to a function on $[a_n,b_n)$ (resp. $(a_n,b_n]$), which is canonical and of bounded variation on $[a_n,e_n]$ (resp. $[e_n,b_n]$). Denote the closure of $I_n$ in $\tI_k$ by 
\[
	\hat{I}_n:=\langle a_n,b_n\rangle,
\]
i.e. $a_n\in \hat{I}_n$ or $b_n\in \hat{I}_n$ if $a_n\in\tI_k$ or $b_n\in \tI_k$, and the extension of $\varrho_n$ to $\hat{I}_n$ still by $\varrho_n$. Moreover, set $\varrho_n(a_n-):=0$ (resp. $\varrho_n(b_n):=0$) whenever  $a_n\in \hat{I}_n$ (resp. $b_n\in \hat{I}_n$). Further denote the induced Radon signed measure of $\varrho_n$ on $\hat{I}_n$ by $\nu_{\varrho_n}$ (see the explanation below Definition~\ref{DEF1}), and let $V_n:=|\nu_{\varrho_n}|(\hat{I}_n)$ be the total variation of $\nu_{\varrho_n}$. Note that if $\hat{I}_n=[a_n,b_n]$, then $V_n$ is finite and $\nu_{\varrho_n}(\hat{I}_n)=0$. 

\begin{lemma}\label{LM612}
\begin{itemize}
\item[(1)] For any $n\neq m$, $\nu_{\varrho_n}\perp \nu_{\varrho_m}$. 
\item[(2)] $\rho$ is locally of bounded variation on each $\tI_k$, if and only if 
for any $k$ and any compact interval $K\subset \tI_k$, 
\begin{equation}\label{EQ5NIN2}
\sum_{n: I_n\subset K}c_nV_n<\infty.
\end{equation}
In the meanwhile, the induced measure $\nu_\rho$ of $\rho$ is 
\begin{equation}\label{EQ5NNC}
	\nu_\rho=\sum_{n\geq 1}c_n\nu_{\varrho_n}. 
\end{equation}
\end{itemize}
\end{lemma}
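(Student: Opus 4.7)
\textbf{Proof plan for Lemma~\ref{LM612}.}

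\emph{Part (1) — mutual singularity.} My plan is to exploit disjointness of the open intervals and to control possible atoms at shared endpoints. Since $\{I_n\}$ is a disjoint family of open intervals, $\hat{I}_n\cap\hat{I}_m$ is either empty or consists of a single common endpoint $z$ (e.g.\ $z=b_n=a_m\in\tI_k$). The continuous parts of $\nu_{\varrho_n}$ and $\nu_{\varrho_m}$ are carried by $I_n$ and $I_m$ respectively, and the discrete parts coming from interior jumps live in these disjoint interiors too. The only place where mutual singularity could fail is an atom at such a common point $z$. Using Definition~\ref{DEF1} one checks that $\nu_{\varrho_n}(\{z\})=-\varrho_n(b_n-)$ and $\nu_{\varrho_m}(\{z\})=\varrho_m(a_m)$. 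By Lemma~\ref{LM58}(7), these are nonzero exactly when $|\mu|([e_n,b_n))<\infty$ and $|\mu|((a_m,e_m])<\infty$, respectively. But if both were finite, then $|\mu|$ would be finite on a full neighbourhood of $z$ (the shared point itself contributes $|\mu_z|\le 1$), forcing $z\in G$, which contradicts $z\in G^c$. Hence at most one of the two atoms is nonzero, and the supports can be separated by choosing $\{z\}$ to sit entirely inside one of them.

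\emph{Part (2) — sufficiency and the formula.} Assume $\sum_{n:I_n\subset K}c_nV_n<\infty$ for every compact $K\subset\tI_k$. Since $\cup_{n:I_n\subset\tI_k}I_n$ differs from $\mathring{\tI}_k$ only on a countable set of shared endpoints, and $\rho=c_n\varrho_n$ on each $I_n$, I can candidate-define a Radon signed measure on $\tI_k$ by $\tilde\nu:=\sum_{n}c_n\nu_{\varrho_n}$ (well-defined and locally finite by the hypothesis and local finiteness of the family $\{\hat I_n\}$ inside any compact $K$). The plan is to verify directly that $\tilde\nu$ is the induced measure of $\rho$: on each $I_n$ this is automatic from $\rho=c_n\varrho_n$; at a shared endpoint $z=b_n=a_m\in\tI_k$, the jump $\rho(z)-\rho(z-)=c_m\varrho_m(a_m)-c_n\varrho_n(b_n-)$ matches $c_n\nu_{\varrho_n}(\{z\})+c_m\nu_{\varrho_m}(\{z\})$; and at endpoints of $\tI_k$ belonging to $\tI_k$, the canonical convention $\rho(\fb_k)=0$, $\rho(\fa_k-)=0$ (consistent with how we extended $\varrho_n$) produces the right boundary atom. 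This will give that $\rho$ is locally of bounded variation on $\tI_k$ with $\nu_\rho=\tilde\nu$, which is \eqref{EQ5NNC}.

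\emph{Part (2) — necessity.} Conversely, suppose $\rho$ is locally of bounded variation on $\tI_k$ and let $K\subset\tI_k$ be compact. Enlarge $K$ slightly to a compact $K'\subset\tI_k$ so that every $I_n$ with $I_n\subset K$ satisfies $\hat I_n\subset K'$ (this is possible because the family $\{I_n\}$ is locally finite in $\tI_k$ in the sense that only finitely many $I_n$ have diameter bounded below and meet $K$; smaller $I_n$'s near accumulation points can be absorbed in a small enlargement). On $K'$, Part (1) together with the identification $\nu_\rho=\sum_n c_n\nu_{\varrho_n}$ (already established on the absolutely-continuous/continuous-singular parts, and extended to atoms using the non-collision argument of Part (1)) gives $|\nu_\rho|(K')=\sum_n c_n|\nu_{\varrho_n}|(K'\cap\hat I_n)\ge \sum_{n:I_n\subset K}c_nV_n$. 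Since $|\nu_\rho|(K')<\infty$, \eqref{EQ5NIN2} follows.

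\emph{Anticipated main obstacle.} The delicate point will be justifying $\nu_\rho=\sum_n c_n\nu_{\varrho_n}$ at the (countably many) shared endpoints $z=b_n=a_m\in\tI_k$, because the canonical-version convention of Definition~\ref{DEF1} places boundary atoms of $\varrho_n$ at $b_n$ and $\varrho_m$ at $a_m$ with possibly opposite signs. The key technical lemma I will need (and which is really what Part (1) gives) is the non-collision statement: at such a point, at most one of the two boundary atoms is nonzero, so there is no cancellation, the sum $\sum_n c_n\nu_{\varrho_n}$ behaves additively, and the total variation decomposes cleanly so that both directions of the equivalence in (2) go through.
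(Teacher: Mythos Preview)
Your Part~(1) argument is correct and in fact slightly cleaner than the paper's: you argue directly that if both boundary atoms at $z=b_n=a_m$ were nonzero then $|\mu|$ would be finite on a full neighbourhood of $z$, contradicting $z\in G^c$. The paper instead invokes condition~(1) of Theorem~\ref{THM58} to force $z\in\Xi^+\cap\Xi^-=\emptyset$.

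Part~(2) has a genuine gap. Your claim that ``$\cup_{n:I_n\subset\tI_k}I_n$ differs from $\mathring\tI_k$ only on a countable set of shared endpoints'' is false: the complement is $\mathring\tI_k\cap G^c$, which by condition~(2) of Theorem~\ref{THM58} is only nowhere dense and can be uncountable --- the Cantor-type constructions of \S\ref{SEC54} are built around exactly this situation. Consequently your verification of $\nu_\rho=\sum_n c_n\nu_{\varrho_n}$ covers only the open $I_n$ and the shared endpoints, and omits every $z\in G^c\cap\tI_k$ that is a two-sided accumulation point of $\{a_n,b_n\}$. At such $z$ no $\nu_{\varrho_n}$ carries an atom, so you must show $\rho(z)=\rho(z-)=0$ matches the running integral of $\tilde\nu$. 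The missing ingredient is the telescoping identity $\nu_{\varrho_n}([a_n,b_n])=\varrho_n(b_n)-\varrho_n(a_n-)=0$ coming from the canonical convention, which forces each fully enclosed $\hat I_n$ to contribute zero. The paper makes this explicit by defining $\tilde\rho(z):=c_p\varrho_p(e_p)+\tilde\nu((e_p,z])$ and checking $\tilde\rho=\rho$ pointwise in three separate cases: $z\in I_n$, $z\in\tI_k\setminus\cup_n[a_n,b_n]$, and $z\in\{a_n,b_n\}$.

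The same omission undermines your necessity argument: you invoke the identity $\nu_\rho=\sum_n c_n\nu_{\varrho_n}$ as ``already established'', but in this direction it has not been proved, and your parenthetical does not supply one. The paper avoids this circularity by comparing $|\nu_K(\{a_n\})|$ with $|\nu_\rho(\{a_n\})|$ through a direct case analysis (whether $a_n=\fa_k$, whether $a_n=b_m$ for some $m$, or whether $a_n$ is a limit of endpoints $b_{p_i}$), and only then bounding $|\nu_K|(K)\le|\nu_\rho|(K)$.
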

\begin{proof}
For the first assertion, argue by contradiction and suppose $e_n<e_m$ but $\nu_{\varrho_n}$ is not singular to $\nu_{\varrho_m}$. This implies $b_n=a_m\in \hat{I}_n\cap \hat{I}_m$, $\nu_{\varrho_n}(\{b_n\})\neq 0$ and $\nu_{\varrho_m}(\{a_m\})\neq 0$. Since $\nu_{\varrho_n}(\{b_n\})\neq 0$ indicates $\varrho_n(b_n-)>0$, which is equivalent to $|\mu|([e_n,b_n))<\infty$ by Lemma~\ref{LM58}~(7), it follows from the first condition that $b_n\in \Xi^-$. Analogically, one can obtain $a_m\in \Xi^+$. Hence $b_n=a_m\in \Xi^+\cap \Xi^-=\emptyset$ which leads to contradiction. 

To prove the necessity of \eqref{EQ5NIN2}, it suffices to show $\nu_K:=\sum_{n: I_n\subset K}\nu_{\varrho_n}$ is a finite signed measure. Take $I_n\subset K$. Firstly, note that $\hat{I}_n=[a_n,b_n]$ and $\rho=c_n\varrho_n$ on $I_n$. It follows that $\nu_\rho=c_n\nu_{\varrho_n}=\nu_K$ on $I_n$. Next, we claim $|\nu_K(\{a_n\})|\leq |\nu_\rho(\{a_n\})|$. Without loss of generality, assume $|\nu_K(\{a_n\})|>0$. When $\nu_{\varrho_n}(\{a_n\})\neq 0$, we have $|\nu_K(\{a_n\})|=|\nu_{\varrho_n}(\{a_n\})|=c_n\varrho_n(a_n)>0$ by the first assertion. In the meanwhile, $a_n\in \Xi^+$ and $\rho(a_n)=c_n\varrho(a_n)$. Moreover, it holds that $a_n=\fa_k\in \tI_k$, or $a_n=b_m$ for some $m$, or $b_{p_i}\uparrow a_n$ for a subsequence $\{b_{p_i}:i\geq 1\}\subset \{b_n: b_n\in \tI_k\}$. In the first case, $|\nu_\rho(\{a_n\})|=\rho(a_n)=c_n\varrho_n(a_n)=|\nu_{\varrho_n}(\{a_n\})|$.  In the second case, since $\rho$ is of bounded variation on $K$, we have $\rho(a_n-)=\lim_{z\uparrow a_n}c_m\varrho_m(z)$ exists. Applying Lemma~\ref{LM58}~(7), $\rho(a_n-)$ must be equal to $0$ since $a_n\notin \Xi^-$. Hence $|\nu_\rho(\{a_n\})|=|\nu_{\varrho_n}(\{a_n\})|$ remains true. In the last case, note that $\rho(b_{p_i})=0$ and hence $\rho(a_n-)=\lim_{i\uparrow \infty}\rho(b_{p_i})=0$. We still have $|\nu_\rho(\{a_n\})|=|\nu_{\varrho_n}(\{a_n\})|$. When $\nu_{\varrho_n}(\{a_n\})= 0$, we have $a_n=b_m$ for some $I_m\subset K$ and $\nu_{\varrho_m}(\{b_m\})\neq 0$, i.e. $b_m\in \Xi^-$. One can obtain that $\rho(a_n)=c_n\varrho_n(a_n)=0$ and $\rho(a_n-)=c_m\varrho_m(b_m-)>0$. Hence $|\nu_\rho(\{a_n\})|=c_m|\varrho_m(b_m-)|=|\nu_{K}(\{a_n\})|$. Eventually we can conclude $|\nu_K(\{a_n\})|\leq |\nu_\rho(\{a_n\})|$. Similarly, $|\nu_K(\{b_n\})|\leq |\nu_\rho(\{b_n\})|$ holds. Finally, $|\nu_K|(K)\leq |\nu_K|(\cup_{n\geq 1}I_n)+|\nu_K|(\cup_{n\geq 1}\{a_n,b_n\})\leq |\nu_\rho|(\cup_{n\geq 1}I_n)+|\nu_\rho|(\cup_{n\geq 1}\{a_n,b_n\}) \leq  |\nu_\rho|(K)<\infty$. 

To prove the sufficiency of \eqref{EQ5NIN2}, fix $k\geq 1$ and take $e_p\in \tI_k$. Denote the right hand side of \eqref{EQ5NNC} by $\nu$. Then \eqref{EQ5NIN2} and the first assertion indicate that $\nu$ is a Radon signed measure on $\tI_k$. Define
\[
	\tilde{\rho}(z):=\left\lbrace
	\begin{aligned}
	&c_p\varrho_p(e_p)+\nu((e_p,z]),\quad e_p\leq z\in \tI_k,\\
	&c_p\varrho_p(e_p)-\nu((z,e_p]),\quad e_p>z\in \tI_k. 
	\end{aligned}
	\right. 
\] 
Clearly $\tilde{\rho}$ is a canonical function locally of bounded variation on $\tI_k$. Let us show $\rho=\tilde{\rho}$ on $\tI_k$, which leads to $\nu_\rho=\nu$. In fact, recall that $\nu_{\varrho_n}([a_n,b_n])=\varrho_n(b_n)-\varrho_n(a_n-)=0$ if $[a_n,b_n]\subset \tI_k$. For any $z\geq e_p$ and $z\in I_n\subset \tI_k$, we have
\[
	\tilde{\rho}(z)=c_p\varrho_p(e_p)+c_p\nu_{\varrho_p}((e_p,b_p])+c_n\nu_{\varrho_n}([a_n,z])=c_n\varrho_n(z)=\rho(z). 
\]
The case $z<e_p$ can be treated similarly. Hence $\rho=\tilde{\rho}$ on all $I_n$. For $z\in \tI_k\setminus \cup_{n\geq 1}[a_n,b_n]$, we can obtain $\tilde{\rho}(z)=0=\rho(z)$. Finally, it suffices to prove $\rho(a_n)=\tilde{\rho}(a_n)$ and $\rho(b_n)=\tilde{\rho}(b_n)$ for $a_n, b_n\in \tI_k$. We only treat the case $b_n\in \tI_k$ with $b_n>e_p$. When $b_n=a_m$ for some $m$, $\tilde{\rho}(b_n)=c_m\nu_{\varrho_m}(\{a_m\})=c_m\varrho_m(a_m)$. Note that $\tilde{\rho}(b_n)>0$ if and only if $b_n=a_m\in \Xi^+$. In this case, $\rho(b_n)=\rho(a_m)=\lim_{z\downarrow a_m}\rho(z)=c_m\varrho_m(a_m)=\tilde{\rho}(b_n)$. Otherwise if $\tilde{\rho}(b_n)=0$, then $b_n\in G^c\setminus \Xi^+$. Thus $\rho(b_n)=0=\tilde{\rho}(b_n)$. When no such $m$ exists, it follows from the first condition that $b_n\notin \Xi^+$ and hence $\rho(b_n)=0$. From the definition of $\tilde{\rho}$, one can also obtain $\tilde{\rho}(b_n)=0$. That completes the proof. 
\end{proof}

(d) \emph{Relation between $\mu$ and $\rho$}. It follows from $\rho=c_n\varrho_n$ on $I_n$ and Lemma~\ref{LM58}~(5) that 
\[
	\frac{\nu_\rho(dz)}{\rho(z)+\rho(z-)}=\frac{c_n\nu_{\rho_n}(dz)}{c_n(\varrho_n(z)+\varrho_n(z-))}=\mu(dz),\quad z\in I_n.
\]
Note that $|\nu_\rho|\left((\cup_{n\geq 1}[a_n,b_n])^c\right)=0$ by \eqref{EQ5NNC},  while $(\cup_{n\geq 1}[a_n,b_n])^c\subset G^c\setminus \Xi$ and the second condition indicates $|\mu|\left((\cup_{n\geq 1}[a_n,b_n])^c\right)=0$. 
Furthermore, consider the endpoint $a_n$. By mimicking the proof of the first assertion in Lemma~\ref{LM612}, we know that $\nu_\rho(\{a_n\})\neq 0$ is equivalent to $a_n\in \Xi^+$ or $a_n=b_m\in \Xi^-$ for some $m$. In the former case, $\rho(a_n)=c_n\varrho_n(a_n)>0$ and $\rho(a_n-)=0$, as shown in the proof of the necessity of \eqref{EQ5NIN2}. Hence 
\[
	\frac{\nu_\rho(\{a_n\})}{\rho(a_n)+\rho(a_n-)}=\frac{\rho(a_n)-\rho(a_n-)}{\rho(a_n)+\rho(a_n-)}=1=\mu(\{a_n\}). 
\]
In the latter case, $\rho(b_m-)=c_m\varrho_m(b_m-)>0$ and $\rho(b_m)=0$. Then one can also obtain $\nu_\rho(\{a_n\})/(\rho(a_n)+\rho(a_n-))=\mu(\{a_n\})$. When $\nu_\rho(\{a_n\})=0$, it is easy to find $a_n\in G^c\setminus \Xi$. As a consequence, $\mu(\{a_n\})=0$. Another endpoint $b_n$ can be treated similarly. Eventually we can conclude that \eqref{EQ3MZN} holds. In other words, the relation \eqref{EQ3MZN} between $\mu$ and $\rho$ is satisfied without extra conditions. 

(e) \emph{Conservativeness}. By Proposition~\ref{THM4}, it suffices to deal with the case that some effective interval is unbounded. More precisely, when $(L,\infty)\subset \tI_k$ (resp. $(-\infty, -L)\subset \tI_k$) for some $k$ and some constant $L>0$, it should hold
\begin{equation}\label{EQ5LXR}
	\int_L^\infty \frac{dx}{\rho(x)}\int_L^x\rho(y)dy=\infty, \quad \left(\text{resp. }\int_{-\infty}^{-L} \frac{dx}{\rho(x)}\int_x^{-L}\rho(y)dy=\infty \right). 
\end{equation}
If $(L,\infty)\cap G^c=\emptyset$ (resp. $(-\infty, -L)\cap G^c=\emptyset$), then $(L,\infty)\subset I_n$ (resp. $(-\infty, -L)\subset I_n$) for some $n$ and $\rho=c_n\varrho_n$ on $(L,\infty)$ (resp. $(-\infty, -L)$). Hence the fifth condition leads to \eqref{EQ5LXR}. When all $I_n\subset \tI_k$ is bounded, we present a sufficient condition on $\{c_n:n\geq 1\}$ for \eqref{EQ5LXR} as follows. We only treat the case $(L,\infty)\subset \tI_k$. Take a subsequence $\{I_{p_i}: i\geq 1\}$ of $\{I_n\subset \tI_k: n\geq 1\}$ such that $I_{p_i}$ increases to $\infty$ as $i\uparrow\infty$ and $L<a_{p_1}$. Then the left hand side of \eqref{EQ5LXR} is greater than
\[
	\sum_{i=1}^\infty \int_{I_{p_i}}\frac{dx}{\rho(x)}\left(\sum_{j=1}^{i-1}\int_{I_{p_j}}\rho(y)dy\right)=\sum_{i=1}^\infty \frac{B_{p_i}}{c_{p_i}}\left(c_{p_1}A_{p_1}+\cdots +c_{p_{i-1}}A_{p_{i-1}} \right).
\]
Hence if $c_{p_i}\leq B_{p_i}\left(c_{p_1}A_{p_1}+\cdots +c_{p_{i-1}}A_{p_{i-1}} \right)$ for all $i$, then \eqref{EQ5LXR} is satisfied. 

\begin{proof}[Proof of the sufficiency of Theorem~\ref{THM58}]
By these arguments, we can conclude that $\rho$ is the expected density function, if and only if $\{c_n: n\geq 1\}$ satisfies \eqref{EQ5NINL}, \eqref{EQ5NIN2} and \eqref{EQ5LXR}. It suffices to show the existence of such a set $\{c_n: n\geq 1\}$. To this end, set $\phi_n:=1/(n^2A_n)$ if $A_n<\infty$ and $\phi_n:=1$ if $A_n=\infty$, $\varphi_n:=1/(n^2V_n)$ if $V_n<\infty$ and $\varphi_n:=1$ if $V_n=\infty$. When the sequence $\{I_{p_i}: i\geq 1\}$ increasing to $\infty$ in (e) (or an analogical sequence $\{I_{q_i}\}$ decreasing to $-\infty$) exists, define $c_{p_1}:=\phi_{p_1}\wedge \varphi_{p_1}$ and by induction
\[
	c_{p_{i}}:=\phi_{p_i}\wedge \varphi_{p_i}\wedge \left(B_{p_i}\cdot\left(c_{p_1}A_{p_1}+\cdots +c_{p_{i-1}}A_{p_{i-1}} \right) \right),\quad i\geq 2.
\]
The set $\{c_{q_i}: i\geq 1\}$ is defined analogically. For $n\neq p_i,q_i$, define $c_n:=\phi_n\wedge \varphi_n$. Then one can easily check that $c_n>0$ for any $n\geq 1$, and \eqref{EQ5NINL}, \eqref{EQ5NIN2} and \eqref{EQ5LXR} hold. That completes the proof. 
\end{proof}

\subsection{Irreducibility and uniqueness}

The following corollary describes all related general skew Brownian motions and their irreducibility. The proof is trivial by Remark~\ref{RM67} and the arguments in \S\ref{SEC632}. 

\begin{corollary}\label{COR67}
Under the conditions of Theorem~\ref{THM58}, every general skew Brownian motion $X$ related to \eqref{EQ3YTT} is determined by a density function $\rho$ given as follows: 
\begin{equation}\label{EQ5RZC}
\begin{aligned}
	&\rho(z)=c_n\varrho(z),\quad z\in I_n, \quad n\geq 1 \\
	&\rho(a_n)=\lim_{z\downarrow a_n}c_n\varrho(z),\quad a_n\in \Xi^+,\\
	&\rho(z)=0,\quad z\notin G\cup \Xi^+,
\end{aligned}
\end{equation}
where $\{c_n:n\geq 1\}$ is a set of positive constants satisfying \eqref{EQ5NINL}, \eqref{EQ5NIN2} and \eqref{EQ5LXR}. Furthermore, $X$ is irreducible, if and only if $I_i$ is scale-connected to $I_j$ for all $i,j$, i.e. $G^c$ is of zero Lebesgue measure and \eqref{EQ5NEE} holds for all $i,j$.
\end{corollary}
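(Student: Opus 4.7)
The plan is to read the corollary as a direct repackaging of the analysis carried out for Theorem~\ref{THM58}, together with one additional application of Lemma~\ref{LM32}(4). The first claim is that \eqref{EQ5RZC} (with $\{c_n\}$ constrained by \eqref{EQ5NINL}, \eqref{EQ5NIN2} and \eqref{EQ5LXR}) parametrizes exactly the related general skew Brownian motions. The necessity direction is essentially Remark~\ref{RM67}: any related $X$ has a density function $\rho$ which on each $I_n$ is forced by Lemma~\ref{LM58}(5) to be a positive scalar multiple $c_n\varrho$ of $\varrho$, which at each $a_n\in\Xi^+$ must equal $\lim_{z\downarrow a_n}c_n\varrho(z)$, and which vanishes on the $\fm$-polar remainder $G^c\setminus\Xi^+$. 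Once this shape is fixed, the five requirements (a)--(e) isolated in \S\ref{SEC632} translate transparently: (b) and (d) are automatic under the five conditions of Theorem~\ref{THM58}, while (a), (c), (e) are respectively equivalent to \eqref{EQ5NINL}, \eqref{EQ5NIN2} and (are implied by, in the sense made precise at the end of \S\ref{SEC632}) \eqref{EQ5LXR}. The sufficiency part is then an application of the explicit construction of $\{c_n\}$ at the end of \S\ref{SEC632}, which shows these three conditions are simultaneously achievable.

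For the irreducibility statement, the cleanest route is Lemma~\ref{LM32}(4): $X$ is irreducible iff $1/\rho\in L^1_{\mathrm{loc}}(\bR)$. Given the form \eqref{EQ5RZC}, $\rho$ vanishes identically on $G^c\setminus\Xi^+$; since $\Xi^+$ is countable and thus Lebesgue-null, local integrability of $1/\rho$ immediately forces $G^c$ itself to be Lebesgue-null. Assuming this, for any two indices $i,j$ with $e_i<e_j$ one splits
\[
\int_{e_i}^{e_j}\frac{dy}{\rho(y)}=\frac{B_i^{\rr}}{c_i}+\sum_{n:\,I_n\subset (e_i,e_j)}\frac{B_n}{c_n}+\frac{B_j^{\rl}}{c_j},
\]
and finiteness of this sum is exactly the scale-connectedness condition \eqref{EQ5NEE}. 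Hence $1/\rho\in L^1_{\mathrm{loc}}(\bR)$ is equivalent to: $G^c$ has zero Lebesgue measure \emph{and} \eqref{EQ5NEE} holds for every pair $(I_i,I_j)$, which is exactly the claim.

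The only mild obstacle is bookkeeping: one must check that the necessity direction of Theorem~\ref{THM58} actually determines $\rho$ up to the choice of $\{c_n\}$ (so the parametrization is exhaustive) and that having a single scale-connected equivalence class of $I_n$'s together with $G^c$ of zero Lebesgue measure is the correct reformulation of $1/\rho\in L^1_{\mathrm{loc}}(\bR)$ rather than a strictly weaker one. Both points follow directly from Remark~\ref{RM67} and the decomposition above, so no new analytic input beyond \S\ref{SEC632} is required and the proof can be written very briefly.
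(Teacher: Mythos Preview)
Your proposal is correct and follows essentially the same approach as the paper, whose proof consists of the single sentence ``The proof is trivial by Remark~\ref{RM67} and the arguments in \S\ref{SEC632}.'' Your elaboration of that one-liner is accurate; the only cosmetic difference is that for irreducibility you invoke Lemma~\ref{LM32}(4) directly, whereas the paper implicitly appeals to the effective-interval construction in \S\ref{SEC632}(b), but since scale-connectedness is by definition local integrability of $1/\rho$ over $(e_i,e_j)$, the two routes are the same computation.
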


We say the general skew Brownian motions related to \eqref{EQ3YTT} are unique, if all of them are equivalent in distribution (see \cite[\S4.2]{FOT11}). 

\begin{corollary}\label{COR614}
The general skew Brownian motions related to \eqref{EQ3YTT} are unique, if and only if for any density function $\rho$ given by \eqref{EQ5RZC}, no different intervals in $\{I_n: n\geq 1\}$ are scale-connected.
\end{corollary}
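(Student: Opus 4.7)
The plan is to leverage Corollary~\ref{COR67}, which parameterizes all related general skew Brownian motions by admissible sequences $\{c_n:n\geq 1\}$ through the formula \eqref{EQ5RZC}. The key observation that I will use in both directions is the following: on any effective interval, the generator associated with density $\rho$ has the informal expression $Lf=\frac{1}{2\rho}(\rho f')'$, which is manifestly invariant under multiplication of $\rho$ by a positive constant. In particular, two densities differing by a single global multiplicative constant on an invariant set produce equivalent diffusions there.

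For the sufficiency direction I will assume that, for \emph{every} admissible $\{c_n\}$, no two distinct intervals in $\{I_n:n\geq 1\}$ are scale-connected, so that every equivalence class $\fI_k$ of the scale-connection relation is a singleton $\{I_n\}$. Because the finiteness of $B^\rl_n/c_n$ and $B^\rr_n/c_n$ is equivalent to that of $B^\rl_n$ and $B^\rr_n$ (independent of $c_n$), the effective intervals $\tI_k=\langle a_n,b_n\rangle$ are the same for every admissible $\{c_n\}$. If $X$ and $X'$ come from admissible sequences $\{c_n\}$ and $\{c'_n\}$, then on each $\tI_k\supset I_n$ the corresponding densities $\rho=c_n\varrho$ and $\rho'=c'_n\varrho$ differ by the global factor $c'_n/c_n$, so their generators coincide and $X|_{\tI_k}\sim X'|_{\tI_k}$ in distribution. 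Gluing these equivalences over $k$ then yields the equivalence in distribution of $X$ and $X'$ on all of $\bR$.

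For the necessity I argue the contrapositive. If some admissible $\{c_n\}$ has $I_i$ scale-connected to $I_j$ with $i\neq j$, then $I_i$ and $I_j$ lie in a common effective interval $\tI_k$. I define $\{c'_n\}$ by $c'_n=c_n$ for $n\neq i$ and $c'_i=\lambda c_i$ with any fixed $\lambda\neq 1$. A short check shows $\{c'_n\}$ remains admissible: only one summand changes in \eqref{EQ5NINL} and \eqref{EQ5NIN2}, the condition \eqref{EQ5LXV2} is purely a condition on $\varrho$, and the auxiliary condition \eqref{EQ5LXR}, if relevant, is preserved since altering $c_i$ affects the integrand only on the bounded set $I_i$ (the unbounded case falls under \eqref{EQ5LXV2}). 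The resulting density satisfies $\rho'/\rho\equiv\lambda$ on $I_i$ and $\rho'/\rho\equiv 1$ on $\tI_k\setminus I_i$, so $\rho'/\rho$ is non-constant on $\tI_k$. Consequently $d\fs'_k/d\fs_k=\rho/\rho'$ is non-constant, the scale functions of $X|_{\tI_k}$ and $X'|_{\tI_k}$ are not affine related, and by the classical Itô--McKean characterization of one-dimensional regular diffusions up to equivalence via scale function and speed measure, the two irreducible diffusions $X|_{\tI_k}$ and $X'|_{\tI_k}$ are not equivalent in distribution; therefore neither are $X$ and $X'$.

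The main obstacle I expect is the careful handling of boundary points of $\tI_k$ when they lie in $\Xi^+\cup\Xi^-$ or when $\tI_k$ is unbounded: one must verify that the equivalence of generators in the sufficiency direction extends to any endpoint included in $\tI_k$, and that the perturbation $c_i\mapsto\lambda c_i$ in the necessity direction preserves admissibility in edge cases, such as when $I_i$ is itself the unique unbounded component at $\pm\infty$. Both issues reduce to case-by-case bookkeeping within the framework already developed in \S\ref{SEC632}, but they are where the argument will require the most care.
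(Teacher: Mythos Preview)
Your proposal is correct and follows essentially the same approach as the paper's proof: both directions hinge on Corollary~\ref{COR67}, the observation that each effective interval's diffusion is determined by $\rho$ up to a multiplicative constant, and the perturbation $c_i\mapsto \lambda c_i$ to break uniqueness in the contrapositive. The paper's proof is terser---it asserts without detail that $\tilde X$ has the same effective intervals and that no global constant relates $\tilde\rho$ to $\rho$ on $\tI_k$---whereas you spell out the generator invariance and invoke the It\^o--McKean characterization explicitly; your flagged obstacles (boundary points, admissibility of the perturbed sequence) are exactly the bookkeeping the paper glosses over, and your sketch of why they cause no trouble is accurate.
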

\begin{proof}
To prove the sufficiency, it suffices to note that for any density function $\rho$ given by \eqref{EQ5RZC}, the set of effective intervals must be $\{\langle a_n,b_n\rangle: n\geq 1\}$, where $a_n\in \langle a_n,b_n\rangle$ (resp. $b_n\in \langle a_n,b_n\rangle$) if and only if $\int_{a_n}^{e_n}\frac{dx}{\varrho(x)}<\infty$ (resp. $\int_{e_n}^{b_n}\frac{dx}{\varrho(x)}<\infty$). Moreover, the restriction $X|_{\langle a_n,b_n\rangle}$ of $X$ to $\langle a_n,b_n\rangle$ is an irreducible diffusion given by the density function $\varrho_n$. More precisely, it is completely characterized by the scale function $\fs_n$ and the speed measure $\fm_n$ as follows (even if $c_n\neq 1$):
\[
\fs_n(x)=\int_{e_n}^x \frac{dy}{\varrho_n(y)},\quad \fm_n(dx)=\varrho_n(x)dx.
\] 
Hence $X$ is uniquely determined. To the necessity, argue by contradiction and suppose $I_i$ is scale-connected to $I_j$ for $\rho$ in \eqref{EQ5RZC} with a set $\{c_n: n\geq 1\}$ of positive constants. Let $\tI_k$ be an effective interval of $X$ such that $I_i\cup I_j\subset \tI_k$. Take another set $\{\tilde{c}_n:n\geq 1\}$ of positive constants as follows: for $n\neq i$, set $\tilde{c}_n:=c_n$ and let $\tilde{c}_i\neq c_i$. Then $\{\tilde{c}_n:n\geq 1\}$ gives another density function $\tilde{\rho}$, which still satisfies \eqref{EQ5NINL}, \eqref{EQ5NIN2} and \eqref{EQ5LXR}. Hence the general skew Brownian motion $\tilde{X}$ with the density function $\tilde{\rho}$ is also related to \eqref{EQ3YTT}. One can easily find that $\tilde{X}$ has the same set of effective intervals as $X$. However, $\tilde{X}|_{\tI_k}$ is not equivalent to $X|_{\tI_k}$ since no constant $c>0$ exists such that $\tilde{\rho}=c\cdot \rho$ on $\tI_k$. This contradicts with the uniqueness. That completes the proof.
\end{proof}
\begin{remark}
Take $I_i$ and $I_j$ with $e_i<e_j$. Then one of the following implies that $I_i$ is not scale-connected to $I_j$:
\begin{itemize}
\item[(i)] $G^c\cap (e_i,e_j)$ is of positive Lebesgue measure;
\item[(ii)] $\int_{e_i}^{b_i}\frac{dx}{\varrho(x)}=\infty$ or $\int_{a_j}^{e_j}\frac{dx}{\varrho(x)}=\infty$;
\item[(iii)] There is an $I_n\subset (e_i,e_j)$ such that $\int_{I_n}\frac{dx}{\varrho(x)}=\infty$.
\end{itemize}
\end{remark}

From Corollaries~\ref{COR67} and \ref{COR614}, we can also conclude that if $G^c\neq \emptyset$ and there is an irreducible general skew Brownian motion related to \eqref{EQ3YTT}, then the related general skew Brownian motions are not unique. Meanwhile, there are infinite different weak solutions to \eqref{EQ3YTT} for all $x\in \bR$. 

\section{Special cases of Theorem~\ref{THM58}}\label{SEC7}

\subsection{No barriers}

In this subsection, let us consider the case without barriers:
\begin{itemize}
\item[(M1)] $G^c=\Xi=\emptyset$, i.e. $|\mu|$ is Radon on $\bR$ and $|\mu(\{z\})|<1$ for all $z\in \bR$. 
\end{itemize}
Under this assumption, write $\varrho$ for the function defined in \eqref{EQ5VIZ} with $I=\bR$ and $e=0$. 
The result below states the well-posedness of \eqref{EQ3YTT}. 

\begin{theorem}\label{THM71}
Assume (M1) and
\begin{equation}\label{EQ5XVX}
	\int_0^\infty\frac{dx}{\varrho(x)}\int_0^x\varrho(y)dy=\int_{-\infty}^0 \frac{dx}{\varrho(x)}\int_x^0\varrho(y)dy=\infty. 
\end{equation}
Then the following hold:
\begin{itemize}
\item[(1)] There exists a unique general skew Brownian motion $X$ related to \eqref{EQ3YTT}. Moreover, $X$ is irreducible.
\item[(2)] For all $x\in \bR$, the SDE \eqref{EQ3YTT} is well posed and $(X_t,\mathbf{P}_x)$ is its unique solution, where $X$ is the general skew Brownian motion in the first assertion. 
\end{itemize}
\end{theorem}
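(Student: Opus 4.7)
The plan is to deduce Part~(1) from Theorem~\ref{THM58} together with Corollaries~\ref{COR67} and~\ref{COR614}. Under (M1), the decomposition \eqref{EQ5UNI} collapses to a single interval $I_1=\bR$ with $a_1=-\infty$, $b_1=+\infty$, and $\Xi^\pm=\emptyset$; conditions (1)--(4) of Theorem~\ref{THM58} are therefore vacuous, while condition (5) reduces exactly to the hypothesis \eqref{EQ5XVX}, since divergence for $L=0$ is equivalent to divergence for any $L>0$ after splitting $\int_0^\infty = \int_0^L+\int_L^\infty$ and using the local boundedness of $\varrho$ and $1/\varrho$ from Lemma~\ref{LM58}~(3); the analogous statement handles $-\infty$. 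Hence a general skew Brownian motion $X$ related to \eqref{EQ3YTT} exists. Since only one interval appears in the decomposition, no two distinct intervals can be scale-connected, so Corollary~\ref{COR614} gives uniqueness, and Corollary~\ref{COR67} (the single interval is trivially scale-connected to itself and $G^c=\emptyset$) gives irreducibility.

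For Part~(2), existence of a weak solution is immediate from Part~(1): the unique effective interval is $\tI_1=\bR$, so Lemma~\ref{COR10} shows that $(X_t,\bfP_x)$ solves \eqref{EQ3YTT} for every $x\in\bR$. For pathwise uniqueness I will truncate and invoke the theorem of Le Gall~\cite{LG84} quoted in the introduction. Let $(Y^1,W)$ and $(Y^2,W)$ be two weak solutions driven by the same Brownian motion $W$ and starting at the same $x$, and for each integer $n>|x|$ set
\[
\tau_n:=\inf\{t>0:|Y^1_t|\vee |Y^2_t|\geq n\}.
\]
For $t<\tau_n$ both processes stay in $(-n,n)$, so $L^z_t(Y^i)=0$ for $|z|\geq n$ and the stopped pair satisfies the SDE driven by $W$ with $\mu$ replaced by $\mu_n:=\mu|_{(-n,n)}$. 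Under (M1), $|\mu_n|$ is finite and $|\mu_n(\{z\})|<1$ for every $z$, so Le Gall's pathwise uniqueness forces $Y^1\equiv Y^2$ on $[0,\tau_n]$. Continuity of $Y^i$ yields $\tau_n\uparrow\infty$ a.s., hence $Y^1\equiv Y^2$ a.s. Combined with existence, this gives well-posedness and forces the unique solution to coincide with $(X_t,\bfP_x)$.

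The most delicate step will be the localization itself. Le Gall's theorem is formulated for global weak solutions on $[0,\infty)$, whereas I only have a stopped equation on $[0,\tau_n]$. The clean way is to observe that Le Gall's proof rests on a Tanaka-type comparison for $|Y^1_t-Y^2_t|$ that is inherently local in time and space: only the restriction of $\mu$ to $(-n,n)$ enters the computation up to $\tau_n$, and this restriction satisfies Le Gall's hypotheses. Should a more conservative route be preferred, one can instead extend each $Y^{i,\tau_n}$ beyond $\tau_n$ by an independent Brownian motion so as to produce genuine global weak solutions of the SDE driven by $\mu_n$, and then apply Le Gall verbatim before sending $n\to\infty$. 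Either way, once the localization is justified, the remainder of the proof is bookkeeping within the framework developed in Sections~\ref{SEC3}--\ref{SEC5}.
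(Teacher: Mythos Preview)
Your argument for Part~(1) is correct and matches the paper's: both invoke Theorem~\ref{THM58} for existence and Corollaries~\ref{COR67},~\ref{COR614} for irreducibility and uniqueness.

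For Part~(2) you take a genuinely different route from the paper. The paper does not localize: it applies the scale transform $f(z)=\int_0^z dy/\varrho(y)$ directly to any weak solution $Y$, obtaining via It\^o--Tanaka that $Z_t:=f(Y_t)$ solves $dZ_t=h(Z_t)\,dW_t$ with $h=(1/\varrho)\circ f^{-1}$, and then invokes the Nakao-type criterion from \cite[Ch.~IX,\ (3.5) and (3.13)]{RY99} (using that $|h(y)-h(z)|^2\leq|\tilde F(y)-\tilde F(z)|$ for a suitable increasing $\tilde F$ built from the total variation of $1/\varrho$). This is essentially Le~Gall's own mechanism, carried out for Radon rather than finite $\mu$; no truncation is needed because $\varrho$ and $1/\varrho$ are locally bounded. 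Your approach---truncate to $\mu_n=\mu|_{(-n,n)}$ and cite Le~Gall as a black box---is shorter on paper but shifts all the work into the localization step.

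That step has a gap as written. Your second suggested route, extending $Y^{i,\tau_n}$ ``by an independent Brownian motion,'' does not produce two global weak solutions driven by the \emph{same} $W$, so Le~Gall's pathwise uniqueness does not apply to the extensions. The correct splicing is: after $\tau_n$, continue each $\tilde Y^i$ as the \emph{strong} solution of the $\mu_n$-SDE starting from $Y^i_{\tau_n}$, driven by the original increments $(W_t-W_{\tau_n})_{t\geq\tau_n}$; strong existence is available because Le~Gall plus Yamada--Watanabe gives it for finite $\mu_n$. Then both $\tilde Y^1,\tilde Y^2$ are global weak solutions of the $\mu_n$-SDE with the same $W$, pathwise uniqueness forces $\tilde Y^1=\tilde Y^2$, and hence $Y^1=Y^2$ on $[0,\tau_n]$. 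Your first route (``Le~Gall's Tanaka comparison is inherently local'') is also valid, but carrying it out amounts to redoing the scale-transform computation on $[0,\tau_n]$---at which point you have essentially reproduced the paper's direct proof, and the localization buys nothing.
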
 
\begin{proof}
\begin{itemize}
\item[(1)] The existence of $X$ is clear since the equivalent conditions in Theorem~\ref{THM58} are satisfied. The uniqueness and irreducibility of related general skew Brownian motions is indicated by Corollaries~\ref{COR67} and \ref{COR614}.  
\item[(2)] It suffices to show the pathwise uniqueness of \eqref{EQ3YTT} for all $x\in \bR$. Fix $x$ and let $(Y,W)$ be a weak solution to \eqref{EQ3YTT}. Set
\begin{equation}\label{EQ5FXX}
	f(z):=\left\lbrace 
	\begin{aligned}
		&\int_0^z \frac{dy}{\varrho(y)},\quad z\geq 0, \\
		&-\int_z^0 \frac{dy}{\varrho(y)},\quad z< 0. 
	\end{aligned}\right.
\end{equation}
Note that $1/\varrho$ is locally of bounded variation on $\bR$. Applying It\^o-Tanaka formula to $Y$ and $f$ (see such as \cite[pp.208~(1.5) and pp.219~(1.25)]{RY99}; though the formula there is in terms of convex functions, its proof is robust for this $f$), we have
\begin{equation}\label{EQ5FYT}
f(Y_t)-f(x)=\frac{1}{2}\int_0^t\left(\frac{1}{\varrho(Y_s-)}+\frac{1}{\varrho(Y_s)}\right)dY_s+\frac{1}{2}\int_\bR L^z_t(Y)d\left(\frac{1}{\varrho}\right)(z).
\end{equation}
Mimicking Lemma~\ref{LM58}~(5), we can deduce that 
\begin{equation}\label{EQ5VZZ}
	d\left(\frac{1}{\varrho}\right)(z)=-\left(\frac{1}{\varrho(z)}+\frac{1}{\varrho(z-)}\right)\mu(dz).
\end{equation}
Note that $dL^z_s(Y)$ is a.s. carried on $\{t\geq 0: Y_t=z\}$. 
Substituting \eqref{EQ3YTT} and \eqref{EQ5VZZ} in \eqref{EQ5FYT}, we obtain 
\[
	f(Y_t)-f(x)=\frac{1}{2}\int_0^t\left(\frac{1}{\varrho(Y_s-)}+\frac{1}{\varrho(Y_s)}\right)dW_s. 
\]
Since the discontinuous points of $1/\varrho$ are countable, it follows from the occupation times formula that the right hand side of the above equality is equal to $\int_0^t1/\varrho(Y_s)dW_s$. Let $Z_t:=f(Y_t)$ and $g:=f^{-1}$ be the inverse function of $f$. If 
\[
	f(\infty):=\int_0^\infty1/\varrho(y)dy<\infty,\text{ or }f(-\infty):=\int_{-\infty}^01/\varrho(y)dy>-\infty,\]
 set $g(z):=g(f(\infty))$ for $z\geq f(\infty)$ and $g(z):=g(f(-\infty))$ for $z\leq f(-\infty)$. Then $(Z,W)$ is a solution to
\begin{equation}\label{EQ5ZTH}
	dZ_t=h(Z_t)dW_t,\quad Z_0=f(x),
\end{equation}
where $h:=\frac{1}{\varrho}\circ g$. Clearly for any $r>0$, there exists a constant $\varepsilon_r>0$ such that $h(x)\geq \varepsilon_r$ on $[-r,r]$. Note that $1/\varrho$ is locally of bounded variation. Denote the total variation function of $1/\varrho$ by $F$, i.e. $F$ is increasing, $F(0)=0$ and $dF$ is the total bounded variation measure of $1/\varrho$. Particularly, $|1/\varrho(y)-1/\varrho(z)|\leq |F(y)-F(z)|$ for all $y,z\in \bR$. Set $\tilde{F}(z):=2\left(F\circ g\right)^2(z)$ for $z\geq 0$ and $\tilde{F}(z):=-2\left(F\circ g\right)^2(z)$ for $z< 0$. We have $\tilde{F}$ is increasing and $|h(y)-h(z)|^2\leq |\tilde{F}(y)-\tilde{F}(z)|$ for all $y,z\in \bR$. Then applying \cite[pp.360~(3.5) and pp.366~(3.13)]{RY99}, we can obtain that the pathwise uniqueness holds for \eqref{EQ5ZTH}. Since $f$ is continuous and strictly increasing, one can eventually conclude that the pathwise uniqueness holds also for \eqref{EQ3YTT}.  
\end{itemize}
That completes the proof.
\end{proof}
\begin{remark}
The condition \eqref{EQ5XVX} is used only for the conservativeness of $X$ (or in other words, the non-explosion of the solution to \eqref{EQ3YTT}). Without this condition, the well-posedness of \eqref{EQ3YTT} remains still true but a lifetime should be possibly attached, see \cite{BE13}. 
\end{remark}

The situations in \cite{LG84, ORT15, Ra11} are covered by the above theorem. In \cite{Ra11}, $\mu=\sum_{p\in \bZ} (2\alpha_p-1)\delta_{z_p}$ where $\{z_p:p\in \bZ\}$ is a set of discrete points and $\alpha_p\in (0,1)$. Clearly, (M1) is always satisfied. Instead, \cite{ORT15} considers a set of countable points with one accumulation point as presented in \eqref{EQ1LKL}. Meanwhile (M1) becomes $\sum_{k=1}^\infty |2\alpha_k^--1|+\sum_{k=-\infty}^{-1}|2\alpha_k^+-1|<\infty$. In \cite{LG84}, $\mu$ is assumed to be finite (i.e. $|\mu|$ is finite) and $|\mu(\{z\})|<1$ for all $z\in \bR$. Then from Lemma~\ref{LM58} we know that there exists a constant $c>1$ such that $1/c<\varrho(z)<c$ for all $z\in \bR$. Particularly, \eqref{EQ5XVX} is always true. Moreover, one can find that $X$ is (point) recurrent in the sense that $\mathbf{P}_x(\sigma_y<\infty)=1$ for any $x,y\in \bR$ where $\sigma_y:=\{t>0: X_t=y\}$ is the first hitting time of $X$ at $y$ (see Remark~\ref{RM5}). 

\subsection{Discrete barriers}\label{SEC53}

Next, the barrier set $\Xi$ is not imposed to be empty. Instead, we assume that $\Xi$ is discrete, i.e. $\Xi$ has no accumulation points. Then $\Xi=\{z_n: n\in S\}$, where the index set $S=\{-M, -(M-1),\cdots, 0,1,\cdots, N\}$ is a subset of $\bZ$ with $M,N\leq \infty$, can be written as a two-sided sequence:
\begin{equation}\label{EQ5ZPZ}
	\cdots < z_{-p}<z_{-(p-1)}<\cdots<z_0<z_1<\cdots <z_q<\cdots.
\end{equation}
Set $I_n:=(z_n,z_{n+1})$ and take a fixed point $e_n\in I_n$. When $N<\infty$ or $M<\infty$, $I_N:=(z_N,\infty)$ or $I_{-M-1}:=(-\infty, z_{-M})$.  Moreover, consider the following case:
\begin{itemize}
\item[(M2)] The discrete barrier set $\Xi$ is rearranged as \eqref{EQ5ZPZ} and $|\mu||_{I_n}$ is Radon on $I_n$. 
\end{itemize}
For the sake of brevity, write $n+$ and $n-$ for the intervals $(z_n,e_n]$ and $[e_{n-1},z_n)$ if no confusions cause. For example, $\mu^+(n+)$ means $\mu^+((z_n,e_n])$ and $|\mu|(n-)$ means $|\mu|([e_{n-1},z_n))$. Further write $\varrho_n$ for $\varrho_{I_n}$ in \eqref{EQ5VIZ} with $I=I_n, e=e_n$. Set a function $\varrho$ defined on $\bR\setminus \Xi$ by $\varrho|_{I_n}:=\varrho_n$. 

We classify every point $z_n\in \Xi^+$ as follows. It is called a \emph{real right barrier}, denoted by $z_n\in \Xi^+_\rr$, if
\begin{equation}\label{EQ5MNI}
	|\mu|(n+)<\infty,
\end{equation}
and
\begin{equation}\label{EQ5NYY2}
\int_{n-} \varrho(y)dy<\infty, \quad \int_{n-}\frac{dy}{\varrho(y)}=\infty. 
\end{equation}
When \eqref{EQ5MNI} holds and $\varrho|_{n-}$ can be extended to a function of bounded variation on $[e_{n-1},z_n]$ with
\begin{equation}\label{EQ5VZN}
	\varrho(z_n-):=\lim_{z\uparrow z_n}\varrho(z)=0,\quad \int_{n-}\frac{dy}{\varrho(y)}<\infty,
\end{equation}
we call $z_n$ a \emph{pseudo right barrier} and write $z_n\in \Xi^+_\rp$. 
Clearly $\Xi^+_\rr\cap \Xi^+_\rp=\emptyset$. Set further $\Xi^+_\rn:=\Xi^+\setminus (\Xi^+_\rr\cup\Xi^+_\rp)$. Then a point in $\Xi^+_\rn$ is called a \emph{nonsensical right barrier}. Similarly, one can define the sets $\Xi^-_\rr$, $\Xi^-_\rp$ and $\Xi^-_\rn$ of real, pseudo, and nonsensical left barriers. Let $\Xi_\rr:=\Xi^+_\rr\cup \Xi^-_\rr$, $\Xi_\rp:=\Xi^+_\rp\cup \Xi^-_\rp$ and $\Xi_\rn:=\Xi^+_\rn\cup \Xi^-_\rn$.

\begin{theorem}\label{THM7}
Assume (M2) and that whenever a constant $L>0$ exists such that $(L,\infty)\cap \Xi=\emptyset$ (resp. $(-\infty, -L)\cap \Xi=\emptyset$),
\begin{equation}\label{EQ5LXV}
	\int_L^\infty\frac{dx}{\varrho(x)}\int_L^x \varrho(y)dy=\infty,\quad \left(\text{resp. }\int_{-\infty}^{-L} \frac{dx}{\varrho(x)}\int_x^{-L} \varrho(y)dy=\infty \right).
\end{equation}
Then the following hold:
\begin{itemize}
\item[(1)] There exists a general skew Brownian motion related to \eqref{EQ3YTT}, if and only if $\Xi_\rn$ is empty. Meanwhile, all general skew Brownian motions related to \eqref{EQ3YTT} share the same set of effective intervals.
\item[(2)] One (or all) general skew Brownian motion related to \eqref{EQ3YTT} is irreducible, if and only if $\Xi_\rr$ is empty. 
\item[(3)] The general skew Brownian motions related to \eqref{EQ3YTT} are unique, if and only if $\Xi_\rp$ is empty.
\end{itemize} 
\end{theorem}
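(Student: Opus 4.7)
The plan is to specialize Theorem~\ref{THM58} together with Corollaries~\ref{COR67} and~\ref{COR614} to the present discrete-barrier setting. Under (M2), the open set $G=\bR\setminus\Xi$ has connected components exactly $\{I_n\}$, so condition~(2) of Theorem~\ref{THM58} holds trivially ($G^c\setminus\Xi=\emptyset$), and condition~(5) is our standing assumption~\eqref{EQ5LXV}. The whole argument reduces to checking that the remaining conditions~(1),~(3),~(4) at every $z_n$ are exactly captured by the real/pseudo dichotomy, and then translating scale-connection across a barrier into the same classification.

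First I would translate the conditions of Theorem~\ref{THM58} at a right barrier $z_n\in\Xi^+$, using that $z_n=a_n$ for $I_n$ and $z_n=b_{n-1}$ for $I_{n-1}$. Condition~(1) demands $|\mu|(n+)<\infty$; applying condition~(1) in reverse to $z_n=b_{n-1}$ together with $z_n\notin\Xi^-$ forces $|\mu|(n-)=\infty$. Conditions~(3) and~(4) on the $I_n$-side are automatic: $|\mu|(n+)<\infty$ and Lemma~\ref{LM58}~(7) give a BV extension of $\varrho$ to $[z_n,e_n]$ with $\varrho(z_n)>0$, so $\int_{n+}\varrho\,dy$ and $\int_{n+}dy/\varrho$ are both finite. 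On the $I_{n-1}$-side, condition~(3) becomes $\int_{n-}\varrho\,dy<\infty$, and condition~(4) (triggered when $\int_{n-}dy/\varrho<\infty$) demands a BV extension of $\varrho|_{n-}$; by Lemma~\ref{LM58}~(7) and $|\mu|(n-)=\infty$, any such extension must satisfy $\varrho(z_n-)=0$. Reading this off, all conditions of Theorem~\ref{THM58} at $z_n$ hold iff $z_n\in\Xi_\rr^+\cup\Xi_\rp^+$. A symmetric analysis at $z_n\in\Xi^-$ then yields the existence criterion $\Xi_\rn=\emptyset$ in~(1).

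For the ``same effective intervals'' part of~(1), I exploit that $\Xi$ is discrete, so any bounded subinterval meets only finitely many $I_n$'s. Consequently conditions~\eqref{EQ5NINL} and~\eqref{EQ5NIN2} on $\{c_n\}$ are finite-sum conditions (thus automatic), and adjacent scale-connection of $I_{n-1},I_n$ through $z_n$ reduces to $B_{n-1}^\rr+B_n^\rl<\infty$, a condition depending on $\mu$ alone; since between any pair $I_i,I_j$ there are only finitely many intermediate intervals, the full scale-connection structure, and hence the effective intervals, are intrinsic to $\mu$. By the Step~1 translation, for a right barrier $z_n$ the integral $\int_{n+}dy/\varrho$ is always finite, so scale-connection through $z_n$ holds iff $\int_{n-}dy/\varrho<\infty$, i.e.\ iff $z_n\in\Xi_\rp^+$; an identical statement holds for left barriers. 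Combining with Corollary~\ref{COR67} (irreducibility $\Leftrightarrow$ every pair scale-connected $\Leftrightarrow$ every barrier bridgeable $\Leftrightarrow$ $\Xi_\rr=\emptyset$) proves~(2), and combining with Corollary~\ref{COR614} (uniqueness $\Leftrightarrow$ no two different $I_n$'s scale-connected $\Leftrightarrow$ no barrier bridgeable $\Leftrightarrow$ $\Xi_\rp=\emptyset$) proves~(3).

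The main obstacle is the two-sided bookkeeping in Step~1: at each $z_n$ one must simultaneously manage the conditions of Theorem~\ref{THM58} coming from \emph{both} adjacent intervals, and crucially exploit the disjointness $\Xi^+\cap\Xi^-=\emptyset$ through condition~(1) to conclude that the requirement $\varrho(z_n-)=0$ in the pseudo definition is forced rather than an extra hypothesis. Once this translation is cleanly carried out, the remaining assertions follow as direct applications of the earlier general results.
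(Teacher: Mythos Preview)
Your proposal is correct and follows essentially the same approach as the paper: both reduce to Theorem~\ref{THM58} and Corollaries~\ref{COR67}, \ref{COR614}, translate the five conditions at each barrier $z_n$ into the real/pseudo/nonsensical trichotomy, and observe that under (M2) the scale-connection across $z_n$ is governed solely by the finiteness of $\int_{n\mp}dy/\varrho$ (hence independent of $\{c_n\}$). Your use of condition~(1) at $z_n=b_{n-1}$ together with $z_n\notin\Xi^-$ to force $|\mu|(n-)=\infty$, and then Lemma~\ref{LM58}~(7) to conclude $\varrho(z_n-)=0$ in the pseudo case, is in fact slightly cleaner than the paper's corresponding step.
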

\begin{proof}
To show the first assertion, suppose $\Xi_\rn=\emptyset$ at first. It suffices to verify the conditions in Theorem~\ref{THM58} and then we can conclude the existence of related general skew Brownian motions. Indeed, it follows from $\Xi_\rn=\emptyset$ that $|\mu|(n+)<\infty$ (resp. $|\mu|(n-)<\infty$) for all $z_n\in \Xi^+$ (resp. $z_n\in \Xi^-$). Thus the first condition in Theorem~\ref{THM58} is satisfied. The second condition in Theorem~\ref{THM58} is clear since $G^c=\Xi$ under the assumption (M2). When $z_n\in \Xi^+$, \eqref{EQ5MNI} indicates that $\varrho$ is bounded on $n+$ by Lemma~\ref{LM58}~(7) and thus $\int_{n+}\varrho(y)dy<\infty$. When $z_n\in \Xi^-=\Xi^-_\rr\cup \Xi^-_\rp$, the definition of real or pseudo left barrier also leads to $\int_{n+}\varrho(y)dy<\infty$. Hence the third condition in Theorem~\ref{THM58} is true. The fourth condition in Theorem~\ref{THM58} is implied by \eqref{EQ5VZN}. Finally, \eqref{EQ5LXV} is nothing but \eqref{EQ5LXV2}. Next, suppose $\Xi_n\neq \emptyset$ and we will see no related general skew Brownian motions exist. Without loss of generality, let $z_n\in \Xi^+_\rn$. Since $z_n\notin \Xi^+_\rr$, we have $|\mu|(n+)=\infty$ or $\int_{n-}\varrho(y)dy=\infty$ or $\int_{n-}\frac{dy}{\varrho(y)}<\infty$. The former two cases do not admit the existence of related general skew Brownian motions by Theorem~\ref{THM58}. Hence $\int_{n-}\frac{dy}{\varrho(y)}<\infty$. But $z_n\notin \Xi^+_\rp$ implies that either $\varrho|_{n-}$ cannot be extended to a function of bounded variation on $[e_{n-1},z_n]$, or \eqref{EQ5VZN} is not true. In the former case, the fourth condition in Theorem~\ref{THM58} is not satisfied. In the latter case, we must have $\varrho(z_n-)>0$. Note that $\varrho(z_n)>0$ by $|\mu|(n+)<\infty$. Then $|\mu|(\{z_n\})=\left|\frac{\varrho(z_n)-\varrho(z_n-)}{\varrho(z_n)+\varrho(z_n-)}\right|<1$, which contradicts with $z_n\in \Xi^+$. Therefore the existence of related general skew Brownian motions is equivalent to $\Xi_\rn=\emptyset$. Note that when $z_n\in \Xi_p$ (resp. $z_n\in \Xi_\rr$), 
\[
	\int_{e_{n-1}}^{e_n}\frac{dy}{\varrho(y)}<\infty,\quad \left(\text{resp. } \int_{e_{n-1}}^{e_n}\frac{dy}{\varrho(y)}=\infty\right). 
\]
Since every general skew Brownian motion $X$ related to \eqref{EQ3YTT} has the density function $\rho$ given by \eqref{EQ5RZC}, it follows that each effective interval of $X$ is ended by real barriers or $\pm \infty$, and all barriers belonging to the interior of some effective interval are pseudo. Particularly, the set of effective intervals is independent of the choice of $\{c_n: n\geq 1\}$ in \eqref{EQ5RZC}. In other words, all general skew Brownian motions related to \eqref{EQ3YTT} share the same set of effective intervals. 
Since the irreducibility of $X$ means that $X$ has exactly one effective interval, one can conclude that it is also equivalent to $\Xi_\rr=\emptyset$ by the above arguments. For the third assertion, note that Corollary~\ref{COR67} tells us the uniqueness holds if and only if for any $\rho$ given by \eqref{EQ5RZC}, no different intervals in $\{I_n: -M-1\leq n\leq N\}$ are scale-connected. Hence it is equivalent to $\Xi_\rp=\emptyset$. 
That  completes the proof. 
\end{proof}

The name of nonsensical barriers comes from the consequence that if $\Xi_\rn\neq \emptyset$, then there are no general skew Brownian motions related to \eqref{EQ3YTT}. Pseudo barriers are not ``real"  because related general skew Brownian motions can go across them from both sides. Only real barriers are definitely effective in the sense that no related general skew Brownian motions can pass through them from either side. 

As stated in Corollary~\ref{COR67}, the density function $\rho$ of a related general skew Brownian motion $X$ is determined by a set of positive constant $\{c_n: n\geq 1\}$ in the manner of \eqref{EQ5RZC}. Recall that $\{c_n:n\geq 1\}$ should satisfy \eqref{EQ5NINL}, \eqref{EQ5NIN2} and \eqref{EQ5LXR}. Under (M2), 
\eqref{EQ5NINL} and \eqref{EQ5NIN2} are always true. Hence only the conservativeness of $X$ is required for the choice of $\{c_n: n\geq 1\}$. On the other hand, the effective intervals $\{\tI_k: k\geq 1\}$ of $X$ are independent of $\{c_n: n\geq 1\}$. Each $\tI_k$ is ended by real barriers or $\pm \infty$.  When $\Xi_\rp\neq \emptyset$ and $\tI_k$ contains pseudo barriers, there are infinite related general skew Brownian motions, whose restriction to $\tI_k$ are different. Particularly for $x\in \tI_k$, the pathwise uniqueness of \eqref{EQ3YTT} does not hold. 

\begin{remark}\label{RM74}
Consider the simple case $\mu=\delta_0$. One can easily find $\Xi=\Xi^+_\rn=\{0\}$ and hence Theorem~\ref{THM7} tells us there are no general skew Brownian motions related to \eqref{EQ3YTT}. However, it is well known that \eqref{EQ3YTT} is well posed and the unique solution is the reflected Brownian motion on $[0,\infty)$ as shown in \cite{BC05}, \cite{HS81} and \cite{LG84}.
This is not surprising, because the general skew Brownian motion we expect is a diffusion process on $\bR$, but the real state space of reflected Brownian motion is $[0,\infty)$ and $(-\infty,0)$ is treated as its ceremony. More precisely, the solution to \eqref{EQ3YTT} in the above citations is understood as a diffusion process $(X_t,\mathbf{P}_x)$ with the lifetime $\zeta=\inf\{t>0:X_t\notin [0,\infty)\}$ such that $\mathbf{P}_x(\zeta=\infty)=1$ for $x\geq 0$.

Generally  we can also derive the well-posedness of \eqref{EQ3YTT} by attaching a suitable lifetime to the solutions. Indeed, consider $I_n=(z_n,z_{n+1})$ and assume that
\begin{itemize}
\item[(i)] $z_n\in \Xi^+$ and $|\mu|(n+)<\infty$ whenever $\int_{n+}\frac{dy}{\varrho(y)}<\infty$;
\item[(ii)] $z_{n+1}\in \Xi^-$ and $|\mu|((n+1)-)<\infty$ whenever $\int_{(n+1)-}\frac{dy}{\varrho(y)}<\infty$. 
\end{itemize}  
Let $\tI_n:=\langle z_n,z_{n+1}\rangle$, where $z_n\in \tI_n$ (resp. $z_{n+1}\in \tI_n$) if and only if $\int_{n+}\frac{dy}{\varrho(y)}<\infty$ (resp. $\int_{(n+1)-}\frac{dy}{\varrho(y)}<\infty$). Then mimicking the proof of Theorem~\ref{THM71}, one can conclude that for any $x\in \tI_n$ there exists a unique solution $(X_t,\mathbf{P}_x)$ to \eqref{EQ3YTT} such that 
\begin{equation}\label{EQ5PXN}
	\mathbf{P}_x(\zeta_n=\infty)=1,
\end{equation}
where $\zeta_n:=\inf\{t>0:X_t\notin \tI_n\}$. 
Particularly when $\Xi_\rn=\Xi_\rp=\emptyset$, these assumptions (i) and (ii) are always true for all $n$ and $\{\tI_n: n\geq 1\}$ is exactly the set of effective intervals obtained in Theorem~\ref{THM71}. The unique solution $(X_t,\mathbf{P}_x)$ to \eqref{EQ3YTT} satisfying \eqref{EQ5PXN} coincides with the restriction of the unique general skew Brownian motion related to \eqref{EQ3YTT} to $\tI_n$. 

In the case $\Xi_\rn=\emptyset$ but $\Xi_\rp\neq \emptyset$, take an effective interval $\tI_k$ containing pseudo barriers. Then there are different weak solutions to \eqref{EQ3YTT} for $x\in \tI_k$ as mentioned before this remark. However, it is still possible that some $I_n\subset \tI_k$ satisfies (i) and (ii). An example is given in Example~\ref{EXA13}. When $0<\alpha<1$ in this example, $0$ is a pseudo barrier but $I_1=(0,\infty)$ satisfies (i) and (ii). As a consequence, if attaching the lifetime $\zeta_1:=\inf\{t>0: X_t\notin [0,\infty)\}$, the weak solutions to \eqref{EQ3YTT} (with $x\geq 0$) satisfying $\mathbf{P}_x(\zeta_1=\infty)=1$ are unique and identified with reflected Brownian motion. 
\end{remark}

A lemma below provides comprehensible conditions to distinguish different barriers. Clearly, $z_n\in \Xi^+$ with $|\mu|(n+)=\infty$ is always a nonsensical barrier.

\begin{lemma}\label{LM10}
Assume (M2) and take $z_n\in \Xi^+$ with $|\mu|(n+)<\infty$. Then the following hold:
\begin{itemize}
\item[(1)] If $\mu^-(n-)<\infty$, then $z_n\in \Xi^+_\rn$. 
\item[(2)] If $\mu^-(n-)=\infty$ and $\mu^+(n-)<\infty$, then $z_n\in \Xi^+_\rr\cup \Xi^+_\rp$. Meanwhile, $z_n\in \Xi^+_\rr$ (resp. $z_n\in \Xi^+_\rp$) if and only if
\[
	\int_{n-}\frac{dy}{\varrho(y)}=\infty,\quad \left(\text{resp. }\int_{n-}\frac{dy}{\varrho(y)}<\infty\right).
\]
\item[(3)] If $\mu^-(n-)=\mu^+(n-)=\infty$, then $z_n$ is possibly a real, pseudo or nonsensical barrier. 
\end{itemize}
\end{lemma}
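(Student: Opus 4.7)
The plan is to read off each case from Lemma~\ref{LM58}(6) and~(7) applied to $I=I_{n-1}=(z_{n-1},z_n)$ with base point $e=e_{n-1}$ and right endpoint $b=z_n$, together with the factorization $\varrho=\varrho^+/\varrho^-$ from Lemma~\ref{LM58}(4). On $n-=[e_{n-1},z_n)$ both $\varrho^+$ and $\varrho^-$ are increasing, and their left limits at $z_n$ are finite if and only if $\mu^+(n-)<\infty$ and $\mu^-(n-)<\infty$ respectively.

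For~(1), I would split into two subcases. If additionally $\mu^+(n-)<\infty$, then $|\mu|(n-)<\infty$ and Lemma~\ref{LM58}(7) produces a BV extension of $\varrho$ to $[e_{n-1},z_n]$ with $\varrho(z_n-)>0$; a short compactness argument using Lemma~\ref{LM58}(3) (which gives $\varrho(y),\varrho(y-)>0$ everywhere on $I_{n-1}$) shows $\varrho$ is bounded away from $0$ on $[e_{n-1},z_n)$, so $\int_{n-}dy/\varrho<\infty$, ruling out $\Xi^+_\rr$, while $\varrho(z_n-)>0$ rules out $\Xi^+_\rp$. If instead $\mu^+(n-)=\infty$, then $\varrho^+(z_n-)=+\infty$ while $\varrho^-(z_n-)\in(0,\infty)$, so $\varrho(z_n-)=+\infty$; this excludes any BV extension (hence $\Xi^+_\rp$), and $1/\varrho\to 0$ near $z_n$ again gives $\int_{n-}dy/\varrho<\infty$ (excluding $\Xi^+_\rr$). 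Either way $z_n\in\Xi^+_\rn$.

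For~(2), the hypothesis $\mu^+(n-)<\infty$ triggers the right-endpoint half of Lemma~\ref{LM58}(6), producing a BV extension of $\varrho$ to $[e_{n-1},z_n]$. Since $\mu^-(n-)=\infty$ forces $\varrho^-(z_n-)=+\infty$ while $\varrho^+(z_n-)<\infty$, one has $\varrho(z_n-)=0$. Boundedness of $\varrho$ makes $\int_{n-}\varrho\,dy<\infty$ automatic, so all the defining conditions of both $\Xi^+_\rr$ and $\Xi^+_\rp$ are satisfied except the dichotomy between divergence and convergence of $\int_{n-}dy/\varrho$. Thus $z_n\in\Xi^+_\rr\cup\Xi^+_\rp$ with the claimed characterization.

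For~(3) I would point to Example~\ref{EXA14} and symmetric variants: when $\mu^\pm(n-)=\infty$ simultaneously, cancellation between $\varrho^+$ and $\varrho^-$ allows $\varrho|_{n-}$ to extend or not extend to a BV function on $[e_{n-1},z_n]$, and when it does, $\int_{n-}dy/\varrho$ can be arranged to be either finite or infinite, producing examples of each of the three types. The main technical step is the compactness argument in subcase~(1a): if $\inf_{y\in[e_{n-1},z_n)}\varrho(y)=0$, a convergent subsequence $y_k\to y^*\in[e_{n-1},z_n]$ produces either $\varrho(y^*)=0$ or $\varrho(y^*-)=0$, contradicting Lemma~\ref{LM58}(3) (or $\varrho(z_n-)>0$ when $y^*=z_n$).
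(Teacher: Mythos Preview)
Your proposal is correct and follows essentially the same route as the paper: both split case~(1) into the two subcases $\mu^+(n-)<\infty$ and $\mu^+(n-)=\infty$, use Lemma~\ref{LM58}(6)--(7) for case~(2), and defer case~(3) to Example~\ref{EXA14}. The only cosmetic differences are that in subcase~(1b) you argue directly from the factorization $\varrho=\varrho^+/\varrho^-$ to get $\varrho(z_n-)=+\infty$ and $1/\varrho$ bounded, whereas the paper instead observes that $1/\varrho$ is of bounded variation on $[e_{n-1},z_n]$ (by mimicking Lemma~\ref{LM58}(6) with the roles of $\mu^+$ and $\mu^-$ swapped); and in subcase~(1a) your compactness argument makes explicit what the paper leaves implicit when it asserts that $\varrho(z_n-)>0$ already rules out \eqref{EQ5NYY2}.
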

\begin{proof}
\begin{itemize}
\item[(1)] Suppose $\mu^-(n-)<\infty$. If $\mu^+(n-)<\infty$, then it follows from Lemma~\ref{LM58} that $\varrho(z_n-)$ exists and is positive. Then neither \eqref{EQ5NYY2} nor \eqref{EQ5VZN} is satisfied. Thus $z_n\in \Xi^+_\rn$. If $\mu^+(n-)=\infty$, one can  deduce that $\lim_{z\uparrow z_n}\frac{1}{\varrho}(z)=0$ and $1/\varrho$ is of bounded variation on $[e_{n-1},z_n]$ by mimicking Lemma~\ref{LM58}~(6). Particularly, $1/\varrho$ is integral on $[e_{n-1},z_n]$ but $\varrho(z_n-)\neq 0$. We still have $z_n\in \Xi^+_\rn$. 
\item[(2)] When $\mu^-(n-)=\infty$ and $\mu^+(n-)<\infty$, Lemma~\ref{LM58}~(6) tells us $\varrho(z_n-)=0$ and $\varrho$ is of bounded variation on $[e_{n-1},z_n]$. Particularly, $\varrho$ is integral on $[e_{n-1},z_n]$. If $\int_{n-}\frac{dy}{\varrho(y)}=\infty$, we have \eqref{EQ5NYY2} holds and $z_n\in \Xi^+_\rr$. Otherwise \eqref{EQ5VZN} holds and $z_n\in \Xi^+_\rp$. 
\item[(3)] We shall raise examples to show their possibilities in Example~\ref{EXA14}.
\end{itemize}
That completes the proof.
\end{proof}

Then we can obtain a useful corollary by ignoring the situation $\mu^-(n-)=\mu^+(n-)=\infty$. The proof is straightforward by applying Theorem~\ref{THM7} and Lemma~\ref{LM10}. 

\begin{corollary}\label{COR7}
Under the same conditions as Theorem~\ref{THM7} assume further that no $n$ exists such that $\mu^+(n-)=\mu^-(n-)=\infty$ or $\mu^+(n+)=\mu^-(n+)=\infty$. Then there exists a general skew Brownian motion related to \eqref{EQ3YTT}, if and only if 
\begin{itemize}
\item[(1)] For any $z_n\in \Xi^+$, $|\mu|(n+)<\infty$, $\mu^+(n-)<\infty$ and $\mu^-(n-)=\infty$; 
\item[(2)] For any $z_n\in \Xi^-$, $|\mu|(n-)<\infty$, $\mu^+(n+)=\infty$ and $\mu^-(n+)<\infty$.
\end{itemize}
Meanwhile, $z_n\in \Xi^\pm$ is a real (resp. pseudo) barrier, if and only if $\int_{n\pm}1/\varrho(y)dy=\infty$ (resp. $\int_{n\pm}1/\varrho(y)dy<\infty$). 
\end{corollary}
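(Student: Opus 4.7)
The plan is to reduce the claim to a direct case analysis using Theorem~\ref{THM7} together with Lemma~\ref{LM10}. Theorem~\ref{THM7}(1) tells us that a related general skew Brownian motion exists precisely when $\Xi_\rn = \emptyset$; that is, every barrier must be either real or pseudo. So the task is to translate the condition ``$z_n \notin \Xi_\rn$'' into the explicit integrability conditions in the statement, under the extra assumption that rules out the ambiguous case $\mu^+(n\pm)=\mu^-(n\pm)=\infty$.

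First I would fix $z_n\in\Xi^+$ and go through Lemma~\ref{LM10}. If $|\mu|(n+)=\infty$, then $z_n\in\Xi^+_\rn$ automatically (since \eqref{EQ5MNI} fails), so we must have $|\mu|(n+)<\infty$. Given $|\mu|(n+)<\infty$, the extra assumption kills case (3) of Lemma~\ref{LM10}, leaving only cases (1) and (2). Case (1) of Lemma~\ref{LM10} ($\mu^-(n-)<\infty$) forces $z_n\in\Xi^+_\rn$, which is excluded; so the only admissible situation is case (2), namely $\mu^-(n-)=\infty$ and $\mu^+(n-)<\infty$. Conversely, case (2) of Lemma~\ref{LM10} states that $z_n\in\Xi^+_\rr\cup\Xi^+_\rp$ in this situation, so the three conditions $|\mu|(n+)<\infty$, $\mu^+(n-)<\infty$, $\mu^-(n-)=\infty$ are together equivalent to $z_n\notin\Xi^+_\rn$. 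The analogous analysis for $z_n\in\Xi^-$ (reversing the roles of the sides $n-$ and $n+$) gives the second bullet. Since $\Xi_\rn=\Xi^+_\rn\cup\Xi^-_\rn$, these two sets of conditions together characterize $\Xi_\rn=\emptyset$ and hence (by Theorem~\ref{THM7}) the existence of a related general skew Brownian motion.

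For the final assertion, under the conditions just established every $z_n\in\Xi^+$ falls in case (2) of Lemma~\ref{LM10}, where that lemma distinguishes $\Xi^+_\rr$ from $\Xi^+_\rp$ exactly by whether $\int_{n-}dy/\varrho(y)$ is infinite or finite; the symmetric statement for $\Xi^-$ follows the same way. No further work is needed.

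The main (though modest) obstacle is purely bookkeeping: one must keep straight which side of $z_n$ carries the integrability that makes $\varrho(z_n\pm)=0$, in order to invoke the correct part of Lemma~\ref{LM10}. Apart from that, the proof is a direct transcription of Lemma~\ref{LM10} into the language of Theorem~\ref{THM7}, as the remark in the paper suggests.
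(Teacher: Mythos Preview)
Your proposal is correct and follows exactly the route the paper indicates: the paper's proof consists solely of the sentence ``The proof is straightforward by applying Theorem~\ref{THM7} and Lemma~\ref{LM10},'' and your case analysis is precisely that application. Your observation that $|\mu|(n+)=\infty$ already forces $z_n\in\Xi^+_\rn$ (so that Lemma~\ref{LM10} covers the remaining cases) is also explicitly noted in the paper just before Lemma~\ref{LM10}.
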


We complete this subsection with two examples.  

\begin{example}\label{EXA13}
Take a constant $\alpha$ and
\[
	\mu(dz)=-\frac{\alpha}{2} |z|^{-1}dz|_{(-\infty,0)} +\delta_0. 
\]
Write $I_0=(-\infty, 0), I_1=(0,\infty)$, and take the fixed point in $I_0$ and $I_1$ to be $e_0=-1$ and $e_1=1$. Clearly, $\Xi=\Xi^+=\{z_1: z_1=0\}$, $|\mu|(1+)=0$ and (M2) is true. It is easy to obtain $\varrho(z)=|z|^\alpha$ on $I_1$ and $\varrho\equiv 1$ on $I_2$ (cf. Example~\ref{EXA3}) and straightforward to verify \eqref{EQ5LXV}. Hence all assumptions in Corollary~\ref{COR7} are satisfied. Note that $\mu^+(1-)=0, \mu^-(1-)=\infty$ if and only if $\alpha>0$.  Then from Corollary~\ref{COR7} one can conclude that general skew Brownian motions related to \eqref{EQ3YTT} exist if and only if $\alpha>0$. Moreover, $0$ is a real (resp. pseudo, nonsensical) barrier if and only if $\alpha\geq 1$ (resp. $0<\alpha<1$, $\alpha\leq 0$) by their definitions. 

When $0<\alpha<1$, every general skew Brownian motion related to \eqref{EQ3YTT} is given by the density function 
\[
	\rho(z)=|z|^\alpha\cdot 1_{(-\infty,0)}(z)+c_1\cdot 1_{[0,\infty)}(z)
\]
for some constant $c_1>0$. All of them are irreducible and (point) recurrent. Accordingly, the pathwise uniqueness of \eqref{EQ3YTT} does not hold for any $x\in \bR$. 
\end{example}

Another example illustrates the three cases possibly appeared in the last assertion of Lemma~\ref{LM10}. 

\begin{example}\label{EXA14}
Let
\[
	\mu(dz)=\sum_{k=1}^\infty \left(\beta^+_k \delta_{\{-\frac{1}{2k}\}}-\beta^-_k\delta_{\{-\frac{1}{2k+1}\}}\right) +\delta_0,
\]
where $\beta^+_k, \beta^-_k$ are positive constants with $0<\beta^+_k,\beta^-_k<1$ and
\begin{equation}\label{EQ5KBK}
	\sum_{k\geq 1}\beta^+_k=\sum_{k\geq 1}\beta^-_k=\infty.
\end{equation}
Let $I_0, I_1$ and $e_0, e_1$ be the same as those in Example~\ref{EXA13}. Then $z_1=0\in \Xi^+$, $|\mu|(1+)=0$ and $\mu^+(1-)=\mu^-(1-)=\infty$. Note that (M2) and \eqref{EQ5LXV} are obviously satisfied. 
\begin{itemize}
\item[(1)] Take $\beta_k^+=\beta^-_k=\frac{k}{k+2}$. One can deduce that $\varrho=k+1$ on $(-\frac{1}{2k},-\frac{1}{2k+1})$ and $\varrho=1$ on $(-\frac{1}{2k+1},-\frac{1}{2k+2})$. Then $0$ is neither a real barrier nor a pseudo barrier. We have $0\in \Xi^+_\rn$.
\item[(2)] Take a constant $\alpha>0$ and 
\[
	\beta^+_k=\frac{\left(\frac{k+1}{k}\right)^\alpha-1}{\left(\frac{k+1}{k}\right)^\alpha+1},\quad \beta^-_k=\frac{1-\left(\frac{k}{k+1}\right)^{2\alpha}}{1+\left(\frac{k}{k+1} \right)^{2\alpha}}.
\]
One can check that \eqref{EQ5KBK} is satisfied and $\varrho\equiv \left(\frac{k+1}{k^2} \right)^\alpha$ on $(-\frac{1}{2k}, -\frac{1}{2k+1})$ and $\varrho\equiv \left(\frac{1}{k+1}\right)^\alpha$ on $(-\frac{1}{2k+1},-\frac{1}{2k+2})$. Clearly $\int_{-1}^0\varrho(y)dy<\infty$ since $\varrho$ is bounded. Moreover,
\[
	\int_{-1}^0\frac{dy}{\varrho(y)}=\sum_{k\geq 1}\left(\frac{k^2}{k+1} \right)^\alpha\cdot \frac{1}{2k(2k+1)}+\sum_{k\geq 1}\frac{1}{(k+1)^\alpha}\cdot \frac{1}{(2k+1)(2k+2)}.
\]
When $\alpha\geq 1$, we have $\int_{-1}^0\frac{dy}{\varrho(y)}=\infty$ and $0$ is a real barrier. When $0<\alpha<1$, $\int_{-1}^0\frac{dy}{\varrho(y)}<\infty$ and $\varrho(0-)=0$. One can also conclude that $\varrho$ is of bounded variation on $[-1,0]$ since
\[
\sum_{k\geq 1}\left|\left(\frac{k+1}{k^2}\right)^\alpha- \left(\frac{1}{k+1}\right)^\alpha\right|+\sum_{k\geq 1}\left|\left(\frac{k+2}{(k+1)^2}\right)^\alpha- \left(\frac{1}{k+1}\right)^\alpha\right|<\infty. 
\]
Therefore $0$ is a pseudo barrier.
\end{itemize}
\end{example}

\subsection{Barriers with many accumulation points}\label{SEC54}

Let $K$ be a generalized Cantor set. More precisely, take a sequence $\{\alpha_j:j\geq 1\}$ of numbers in $(0,1)$ and define a decreasing sequence $\{K_j: j\geq 1\}$ of closed sets as follows: $K_0:=[0,1]$, $K_j$ is obtained by removing the open middle $\alpha_j$th from each of the intervals that make up $K_j$. Then $K:=\cap_{j\geq 1}K_j$ is called a generalized Cantor set (see \cite{F99}). When $\alpha_j\equiv 1/3$, $K$ is nothing but the standard Cantor set. Write $K^c$ as a union of disjoint open intervals:
\begin{equation}\label{EQ5KNA}
	K^c=\cup_{n\geq 1}(a_n, b_n)
\end{equation}
and set $I_n:=(a_n, b_n)$. For the sake of clearness, assume $I_1=(-\infty, 0)$ and $I_2=(1,\infty)$. Take $e_1:=-1, e_2:=2$ and $e_n:=(a_n+b_n)/2$ for $n\geq 3$.

We turn to consider a more complicated case of Theorem~\ref{THM58} that $\mu$ satisfies
\begin{itemize}
\item[(M3)] $\Xi^+=\{a_n:n\geq 2\}$, $\Xi^-=\{b_n: n=1\text{ or }n\geq 3\}$ and $|\mu|$ is Radon on $I_n$ for all $n$.  
\end{itemize}
Note that (M3) indicates $G=K^c$ where $G$ is in Theorem~\ref{THM58}. Define a function $\varrho_n$ on $I_n$ by \eqref{EQ5VIZ} with $I=I_n$ and set $\varrho:=\varrho_n$ on all $I_n$ as before. When $|\mu|(I_n)<\infty$ for $n\geq 3$, $\varrho_n$ can be extended to a function of bounded variation on $[a_n,b_n]$ due to Lemma~\ref{LM58}. Denote its canonical version still  by $\varrho_n$ and the induced Radon signed measure on $[a_n,b_n]$ by $\nu_{\varrho_n}$. 


\begin{theorem}\label{THM8}
Assume (M3) and that 
\begin{equation}\label{EQ5XXX}
	\int_2^\infty\frac{dx}{\varrho(x)}\int_2^x \varrho(y)dy=\int_{-\infty}^{-1} \frac{dx}{\varrho(x)}\int_x^{-1} \varrho(y)dy=\infty.
\end{equation}
Then there exists a general skew Brownian motion related to \eqref{EQ3YTT} if and only if $|\mu|([-1,0]\cup [1,2])<\infty$, $|\mu|(I_n)<\infty$ for $n\geq 3$ and $|\mu|(K\setminus \Xi)=0$. Furthermore, every general skew Brownian motion $X$ related to \eqref{EQ3YTT} is determined by a density function $\rho$ given by \eqref{EQ5RZC} with a set of positive constants $\{c_n: n\geq 1\}$ such that
\begin{itemize}
\item[(1)] $\sum_{n\geq 3}c_nA_n<\infty$ where $A_n:=\int_{I_n}\varrho(y)dy$;
\item[(2)] for all $e_i<e_j$ such that $K\cap (e_i,e_j)$ is of zero Lebesgue measure and
\[
\sum_{n: I_n\subset (e_i,e_j)}\frac{B_n}{c_n}<\infty,\quad \left(B_n:=\int_{I_n}\frac{dy}{\varrho(y)}\right),
\]
it holds 
\[
\sum_{n: I_n\subset (e_i,e_j)}c_nV_n<\infty,
\]
where $V_n=|\nu_{\varrho_n}|([a_n,b_n])$.
\end{itemize}
Particularly, $X$ is irreducible, if and only if $K$ is of zero Lebesgue measure and
\begin{equation}\label{EQ5NCN}
\sum_{n\geq 3}\left(c_nA_n+\frac{B_n}{c_n}+c_nV_n \right)<\infty. 
\end{equation}
\end{theorem}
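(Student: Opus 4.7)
The plan is to derive Theorem~\ref{THM8} as a direct specialization of Theorem~\ref{THM58} and Corollary~\ref{COR67} under the hypothesis (M3), where $G^c = K$ and the decomposition $G = \cup_{n \geq 1} I_n$ of \eqref{EQ5UNI} coincides with \eqref{EQ5KNA}. The bulk of the work is a bookkeeping reduction: each of the five conditions in Theorem~\ref{THM58} either becomes one of the listed hypotheses or collapses to an automatic statement thanks to the Radon character of $|\mu|$ on each $I_n$ and the fine properties of $\varrho$ in Lemma~\ref{LM58}.

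For the existence equivalence I would march through the five conditions of Theorem~\ref{THM58}. Since $a_1 = -\infty$ and $b_2 = +\infty$ are excluded from the barrier sets, condition~(1) amounts to $|\mu|((a_n, e_n)) < \infty$ for $n \geq 2$ and $|\mu|((e_n, b_n)) < \infty$ for $n = 1$ or $n \geq 3$, which is equivalent to $|\mu|([-1,0] \cup [1,2]) < \infty$ together with $|\mu|(I_n) < \infty$ for $n \geq 3$ (the atoms at $0$ and $1$ having mass~$1$). Under these conditions Lemma~\ref{LM58}(6)(7) gives $\varrho_n$ a BV extension to each closure $[a_n, e_n]$ or $[e_n, b_n]$ that is bounded by a positive constant, so conditions~(3) and~(4) of Theorem~\ref{THM58} hold automatically. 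Condition~(2) is verbatim $|\mu|(K \setminus \Xi) = 0$, while condition~(5) matches \eqref{EQ5XXX} because $\rho = c_1 \varrho$ on $I_1$, $\rho = c_2 \varrho$ on $I_2$, and $L$ may be chosen as $1$ or $2$.

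For the parametrization, Corollary~\ref{COR67} represents every related general skew Brownian motion via a positive sequence $\{c_n\}$ satisfying \eqref{EQ5NINL}, \eqref{EQ5NIN2}, and \eqref{EQ5LXR}. Since $K \subset [0,1]$, only the intervals $I_n$ with $n \geq 3$ ever sit inside a bounded window $(-L,L)$, and $\int_{-L}^0 c_1 \varrho$, $\int_1^L c_2 \varrho$ are automatically finite because $\varrho_1$ and $\varrho_2$ are bounded on every compact subset of $[-L, 0]$ and $[1, L]$ under the existence conditions; hence \eqref{EQ5NINL} becomes the stated $\sum_{n \geq 3} c_n A_n < \infty$. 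Translating \eqref{EQ5NIN2} through Lemma~\ref{LM612} yields the stated conditional sum over scale-connected pairs $(e_i, e_j)$, while \eqref{EQ5LXR} coincides with \eqref{EQ5XXX}.

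For the irreducibility criterion I would apply Corollary~\ref{COR67}: $X$ is irreducible iff every pair $I_i, I_j$ is scale-connected, which requires $K$ to have zero Lebesgue measure and \eqref{EQ5NEE} to hold for all pairs. Because $\varrho_n$ is bounded below by a positive constant on $[a_n, b_n]$ for $n \geq 3$ and on $[-1,0]$, $[1,2]$ for the end intervals, the terms $B_i^\rr / c_i$ and $B_j^\rl / c_j$ in \eqref{EQ5NEE} are always finite under the existence conditions, so scale-connectedness of the extremal pair $I_1, I_2$ subsumes every inner pair and reduces to $\sum_{n \geq 3} B_n / c_n < \infty$. Combining with the two conditions on $\{c_n\}$ obtained above, evaluated over the now scale-connected window $(-1, 2)$, gives \eqref{EQ5NCN}. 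The main obstacle I foresee is the delicate bookkeeping around the half-infinite intervals $I_1, I_2$, in particular verifying that the boundary contributions to both \eqref{EQ5NINL} and \eqref{EQ5NEE} collapse automatically; once that is carried out, the whole theorem reads off Theorem~\ref{THM58} and Corollary~\ref{COR67}.
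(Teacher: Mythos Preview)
Your proposal is correct and follows essentially the same approach as the paper: specialize Theorem~\ref{THM58} to (M3) with $G^c=K$, observe that conditions (3) and (4) there collapse automatically via Lemma~\ref{LM58} once the finiteness hypotheses hold, identify condition (5) with \eqref{EQ5XXX}, and then read off the parametrization and the irreducibility criterion from Corollary~\ref{COR67} by noting that scale-connection of $I_1$ and $I_2$ subsumes all other pairs. Your write-up is in fact more explicit than the paper's own proof about the bookkeeping around the half-infinite intervals and the reduction of \eqref{EQ5NINL} and \eqref{EQ5NEE}, but the route is the same.
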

\begin{proof}
Note that the first condition in Theorem~\ref{THM58} is now read as $|\mu|([-1,0]\cup [1,2])<\infty$ and $|\mu|(I_n)<\infty$ for $n\geq 3$. Meanwhile, the third and fourth conditions there are always satisfied by this condition. The conservativeness is  indicated by \eqref{EQ5XXX}. Hence we can conclude the assertion concerning the existence. Moreover, the expression of $X$ is clear by applying Corollary~\ref{COR67}. Finally, the irreducibility of $X$ is equivalent to that all $I_n$ are scale-connected. By observing the structure of $\{I_n: n\geq 1\}$, one can find that this is also equivalent to the scale-connection between $I_1$ and $I_2$. In other words, $K$ is of zero Lebesgue measure and $\sum_{n\geq 3}B_n/c_n<\infty$. Note that $\sum_{n\geq 3}(c_nA_n+c_nV_n)<\infty$ follows from the requirements of $\{c_n: n\geq 1\}$. That completes the proof. 
\end{proof}

The corollary below is straightforward by Theorem~\ref{THM8}. Note that for $\mu$ in \eqref{EQ5MND}, $\varrho\equiv 1$ on $K^c$ and as a consequence, $A_n=B_n=|b_n-a_n|$ and $V_n=2$ for $n\geq 3$. 

\begin{corollary}\label{COR8}
Let $K$ be the generalized Cantor set as above and assume that 
\begin{equation}\label{EQ5MND}
	\mu=\sum_{n\neq 1}\delta_{a_n}-\sum_{n\neq 2}\delta_{b_n}.
\end{equation}
Then there always exists a general skew Brownian motion related to \eqref{EQ3YTT}. Every related general skew Brownian motion $X$ is determined by a density function 
$\rho$ given by \eqref{EQ5RZC} with a set of positive constants $\{c_n: n\geq 1\}$ such that
\begin{itemize}
\item[(1)] $\sum_{n\geq 3} c_n\cdot |b_n-a_n|<\infty$; and
\item[(2)] For $e_i<e_j$ such that $\sum_{n: I_n\subset (e_i,e_j)}\frac{|b_n-a_n|}{c_n}<\infty$ and $K\cap (e_i,e_j)$ is of zero Lebesgue measure, it holds that $\sum_{n: I_n\subset (e_i,e_j)}c_n<\infty$. 
\end{itemize}
Furthermore, $X$ is irreducible, if and only if $K$ is of zero Lebesgue measure and 
\begin{equation}\label{EQ5NBN}
	\sum_{n\geq 3}\left(\frac{|b_n-a_n|}{c_n}+c_n \right)<\infty. 
\end{equation}
\end{corollary}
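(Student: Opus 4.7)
The plan is to deduce Corollary~\ref{COR8} directly from Theorem~\ref{THM8} by specializing to the measure $\mu$ in \eqref{EQ5MND}. First I would verify the hypotheses. The support of $\mu$ is $\{a_n:n\geq 2\}\cup\{b_n:n=1\text{ or }n\geq 3\}\subset K\subset[0,1]$, with each atom of mass $\pm 1$, and $\mu$ does not charge the relative interiors of the removed intervals. Therefore $|\mu|(I_n)=0$ for every $n$, $|\mu|([-1,0]\cup[1,2])=2$ (contributed only by $b_1=0$ and $a_2=1$), and $|\mu|(K\setminus\Xi)=0$, so (M3) and the finiteness hypotheses of Theorem~\ref{THM8} are met. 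Since $\varrho$ defined via \eqref{EQ5VIZ} equals $1$ on the unbounded ends $I_1$ and $I_2$, the non-explosion condition \eqref{EQ5XXX} is automatic.

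Next I would compute the relevant quantities. Because $|\mu|$ vanishes on each $I_n$, \eqref{EQ5VIZ} yields $\varrho\equiv 1$ on $K^c$. For $n\geq 3$, taking the canonical version on $[a_n,b_n]$ as in Definition~\ref{DEF1} gives $\varrho_n\equiv 1$ on $[a_n,b_n)$, $\varrho_n(b_n)=0$, and $\varrho_n(a_n-)=0$. The induced Radon signed measure is therefore the pure jump measure $\nu_{\varrho_n}=\delta_{a_n}-\delta_{b_n}$, and consequently
\begin{equation*}
A_n=\int_{I_n}\varrho(y)dy=|b_n-a_n|,\quad B_n=\int_{I_n}\frac{dy}{\varrho(y)}=|b_n-a_n|,\quad V_n=|\nu_{\varrho_n}|([a_n,b_n])=2.
\end{equation*}

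Substituting these into the two conditions displayed in Theorem~\ref{THM8} reproduces exactly the two bulleted conditions of Corollary~\ref{COR8} (the constant factor $V_n=2$ being immaterial for summability). The existence of at least one admissible sequence $\{c_n:n\geq 1\}$, and hence of a related general skew Brownian motion, is guaranteed by the explicit construction at the end of the proof of the sufficiency of Theorem~\ref{THM58}; concretely $c_n:=1/(n^2(1+|b_n-a_n|))$ for $n\geq 3$ works. For the irreducibility statement, substituting the values of $A_n,B_n,V_n$ into \eqref{EQ5NCN} gives
\begin{equation*}
\sum_{n\geq 3}\Bigl(c_n|b_n-a_n|+\frac{|b_n-a_n|}{c_n}+2c_n\Bigr)<\infty,
\end{equation*}
and since $|b_n-a_n|\leq 1$ for $n\geq 3$ the term $c_n|b_n-a_n|$ is dominated by $c_n$ and absorbed, so this reduces to \eqref{EQ5NBN}. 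No step is substantively difficult; the only care required is in identifying the canonical version of $\varrho_n$ at the endpoints and reading off its jump measure.
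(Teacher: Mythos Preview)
Your proposal is correct and follows exactly the approach the paper takes: the paper's proof of Corollary~\ref{COR8} consists of the single observation that for $\mu$ in \eqref{EQ5MND} one has $\varrho\equiv 1$ on $K^c$, whence $A_n=B_n=|b_n-a_n|$ and $V_n=2$, after which everything is read off from Theorem~\ref{THM8}. Your write-up simply fills in the routine verifications (the hypotheses of Theorem~\ref{THM8}, the canonical version of $\varrho_n$ at the endpoints, and the absorption of the $c_nA_n$ term into $c_n$ for the irreducibility criterion) that the paper leaves implicit.
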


We show some facts about the uniqueness of related general skew Brownian motions. 
Recall that $K$ is produced by a sequence $\{\alpha_j\}$ of numbers in $(0,1)$. When $\sum_{j\geq 1}\alpha_j<\infty$, we know from \cite[\S1.5]{F99} that $K$ is of positive Lebesgue measure, and $K\cap (e_i,e_j)$ is also of positive Lebesgue measure for any $i,j$. This indicates that  every two intervals in $\{I_n: n\geq 1\}$ cannot be scale-connected. Particularly,  there is a unique general skew Brownian motion $X$ related to \eqref{EQ3YTT} in Theorem~\ref{THM8} and the set of effective intervals of $X$ must be $\{[a_n,b_n]: n\geq 1\}$ (set $[a_1,b_1]:=(-\infty, 0]$, $[a_2,b_2]:=[1,\infty)$). For the case \eqref{EQ5MND},  the restriction of $X$ to $[a_n,b_n]$ is always a reflected Brownian motion. Another corollary summarizes the typical case $\alpha_j\equiv \alpha$, where $K$ is always of zero Lebesgue measure.

\begin{corollary}\label{COR510}
Let $K$ be a generalized Cantor set produced by $\alpha_j\equiv \alpha\in (0,1)$ and assume that \eqref{EQ5MND} holds. 
\begin{itemize}
\item[(1)] When $\alpha\geq 1/4$, there exists a unique general skew Brownian motion related to \eqref{EQ3YTT}, whose restriction to $[a_n,b_n]$ is a reflected Brownian motion. 
\item[(2)] When $\alpha<1/4$, there are infinite irreducible general skew Brownian motions related to \eqref{EQ3YTT}. Particularly, for all $x\in \bR$, \eqref{EQ3YTT} has infinite weak solutions. 
\end{itemize}
\end{corollary}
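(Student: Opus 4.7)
The plan is to reduce both parts to the convergence of the series $S := \sum_{n \geq 3}\sqrt{|b_n-a_n|}$. With the paper's convention (under which $\alpha=1/3$ produces the standard Cantor set), at step $j$ the construction removes $2^{j-1}$ open intervals each of length $(1-2\alpha)\alpha^{j-1}$ from $K_0=[0,1]$, whence
\[
S \;=\; \sqrt{1-2\alpha}\sum_{j=1}^\infty \bigl(2\sqrt{\alpha}\bigr)^{j-1},
\]
which is finite if and only if $\alpha < 1/4$. Note also that $\sum_{n\geq 3}|b_n-a_n| = 1$, so $K$ has zero Lebesgue measure, and \eqref{EQ5XXX} is automatic on each unbounded end since $\varrho \equiv 1$ on $I_1, I_2$.

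For part~(2), when $\alpha < 1/4$, I would set $c_n := \sqrt{|b_n-a_n|}$. Then $\sum c_n = \sum |b_n-a_n|/c_n = S < \infty$ and $\sum c_n|b_n-a_n| \leq \sum c_n < \infty$, so conditions (1), (2) of Corollary~\ref{COR8} and the irreducibility criterion \eqref{EQ5NBN} are satisfied, producing an irreducible general skew Brownian motion related to \eqref{EQ3YTT}. Replacing $c_n$ by $\lambda_n c_n$ for any sequence $\lambda_n \in [1,2]$ preserves all estimates; non-proportional choices yield non-proportional density functions on $\bR$ and hence non-equivalent processes (by the argument in the proof of Corollary~\ref{COR614}), giving uncountably many irreducible related general skew Brownian motions. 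Each of these is a weak solution of \eqref{EQ3YTT} at every $x \in \bR$ by Lemma~\ref{COR10}, so there are infinitely many weak solutions for every starting point.

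For part~(1), I argue by contradiction: suppose $\alpha \geq 1/4$ and that some valid $\{c_n\}$ makes two distinct intervals $I_i, I_j$ scale-connected (WLOG $e_i < e_j$). Then $\sum_{n: I_n \subset (e_i,e_j)} |b_n-a_n|/c_n < \infty$, and since $K \cap (e_i,e_j)$ has zero Lebesgue measure, condition~(2) of Corollary~\ref{COR8} (with $V_n=2$) forces $\sum_{n: I_n \subset (e_i,e_j)} c_n < \infty$. Cauchy--Schwarz then gives $\sum_{n: I_n \subset (e_i,e_j)} \sqrt{|b_n-a_n|} < \infty$. Self-similarity finishes the job: because $i \neq j$, the window $(e_i, e_j)$ must meet $K$ (an open interval contained in $K^c$ would lie in a single $I_n$), so for any $p \in K \cap (e_i,e_j)$, the unique level-$k$ retained piece $[c_k, d_k]$ containing $p$ has length $\alpha^k \downarrow 0$ and sits inside $(e_i, e_j)$ for all sufficiently large $k$. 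The removed intervals inside $[c_k, d_k]$ form an $\alpha^k$-scaled copy of the full collection, so
\[
\sum_{n: I_n \subset [c_k, d_k]} \sqrt{|b_n-a_n|} \;=\; \alpha^{k/2}\,S,
\]
and finiteness of this sub-sum forces $S < \infty$, contradicting $\alpha \geq 1/4$. Hence no scale-connection is possible; Corollary~\ref{COR614} yields uniqueness with effective intervals $\{[a_n,b_n]: n \geq 1\}$, and the restriction to each $[a_n, b_n]$ has scale function $\fs_n(x) = x - e_n$ and speed measure $dx$, i.e.\ is a reflected Brownian motion.

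The main obstacle is the geometric self-similarity step in part~(1): verifying that every window $(e_i,e_j)$ separating two distinct complement intervals contains a full level-$k$ Cantor piece for some $k$, so that the local sum faithfully rescales to $\alpha^{k/2}S$. Once this is pinned down, Cauchy--Schwarz transfers the convergence of $S$ between the global and local scales, and the threshold $\alpha = 1/4$ falls out of the explicit geometric series in the first step.
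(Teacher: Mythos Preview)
Your argument is correct and follows the same route as the paper's: both pivot on the convergence of $\sum_{n\ge 3}\sqrt{|b_n-a_n|}$, deduce it in part~(1) from scale-connection via AM--GM/Cauchy--Schwarz, and exhibit explicit constants $c_n$ in part~(2). Your choice $c_n=\sqrt{|b_n-a_n|}$ (then perturbed by factors $\lambda_n\in[1,2]$) is a little cleaner than the paper's $c_n=\beta^p$ with $\beta\in(2\alpha,1/2)$, though both work equally well. In part~(1) you are actually more thorough: the paper treats only the pair $I_1,I_2$ and waves away the general case with ``without loss of generality'', whereas your self-similarity step---locating a full level-$k$ retained piece inside $(e_i,e_j)$ on which the partial sum of square roots rescales to $\alpha^{k/2}S$---makes this reduction honest. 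One cosmetic discrepancy: your length formula $(1-2\alpha)\alpha^{j-1}$ differs from the paper's stated $|b_n-a_n|=\alpha^{p+1}$ (and only makes sense for $\alpha<1/2$), but both yield the same geometric ratio $2\sqrt{\alpha}$ and hence the same threshold $\alpha=1/4$; this traces to a minor inconsistency in the paper's own description of the construction rather than to any gap in your proof.
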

\begin{proof}
Assume that $|b_n-a_n|$ is decreasing in $n$. In other words, $|b_3-a_3|=\alpha$ and for $p\geq 1$ and 
\begin{equation}\label{EQ5PNP}
	2+2^0+\cdots 2^{p-1}<n\leq 2+2^0+\cdots +2^{p},
\end{equation}
we have $|b_n-a_n|=\alpha^{p+1}$.

Firstly, consider the case $\alpha\geq 1/4$.  It suffices to show every two different intervals in $\{I_n: n\geq 1\}$ are not scale-connected. Without loss of generality we only prove that $I_1$ and $I_2$ are not scale-connected, i.e. \eqref{EQ5NBN} cannot be true. Argue by contradiction. Indeed, \eqref{EQ5NBN} implies $\sum_{n\geq 3} |b_n-a_n|^{1/2}<\infty$. However it follows from $\alpha\geq 1/4$ that
\[
	\sum_{n\geq 3} |b_n-a_n|^{1/2}=\sum_{p\geq 1}2^p\cdot \left(\alpha^{p+1} \right)^{1/2}=\alpha^{1/2}\sum_{p\geq 1} \left( 2\alpha^{1/2}\right)^p=\infty, 
\]
which leads to contradiction. 
Hence $I_1$ is not scale-connected to $I_2$.

Next, consider the case $\alpha<1/4$. Take $\beta$ such that $2\alpha<\beta<1/2$. Set $c_1=c_2=c_3:=1$ and for $n$ in \eqref{EQ5PNP}, set
\[
	c_n:= \beta^p. 
\]
Define $\rho$ as in Corollary~\ref{COR8}. 
Clearly, $\sum_{n\geq 3}c_n\cdot |b_n-a_n|<\sum_{n\geq 3}c_n\leq \sum_{p\geq 0}\beta^p\cdot 2^p<\infty$ since $2\beta<1$. 
Moreover, 
\[
\sum_{n\geq 3}\frac{|b_n-a_n|}{c_n}=\sum_{p\geq 0}\alpha^{p+1}\cdot \beta^{-p} \cdot 2^p=\alpha\sum_{p\geq 0}\left(\frac{2\alpha}{\beta}\right)^p<\infty. 
\]
Hence $\rho$ leads to an irreducible general skew Brownian motion related to \eqref{EQ3YTT}.  By taking different $\beta$ in $(2\alpha, 1/2)$, one can obtain different general skew Brownian motions related to \eqref{EQ3YTT}. That completes the proof. 
\end{proof}

\appendix
\section{Basics of symmetric one-dimensional diffusions}\label{SEC2}

In this appendix, we are concerned with the basics of a symmetric one-dimensional diffusion $X=(X_t,\bfP_x)$ associated with a regular and strongly local Dirichlet form $(\EE,\FF)$ on $L^2(\bR,\fm)$ where $\fm$ is a given fully supported positive Radon measure on $\bR$.  The crucial fact is that $(\EE,\FF)$ as well as $X$ can be represented by a set of so-called effective intervals, as shown in \cite{LY172}. We shall repeat this result below for readers' convenience. Based on this representation, the quasi-notions and the conservativeness of $(\EE,\FF)$ will be further characterized. Note that all these characterizations can be easily extended to the local case by applying the killing and resurrected transforms, as shown in \cite[\S4]{LY172}. 

\subsection{Representation of symmetric one-dimensional diffusions}\label{SEC21}

Given an interval $\tI=\langle \fa, \fb\rangle$, $\fm|_\tI$ stands for the restriction of $\fm$ to $\tI$. A scale function $\fs$ on $\tI$ means a continuous and strictly increasing function on it. For the sake of brevity, we always take a fixed point $\fe$ in the interior of $\tI$ and impose $\fs(\fe)=0$. The induced Radon measure of $\fs$ is denoted by $\lambda_\fs$. Denote the family of all scale functions on $\tI$ by $\mathrm{S}(\tI)$, i.e. 
\[
	\mathrm{S}(\tI)=\{\fs:\tI\rightarrow \bR \text{ continuous, strictly increasing and }\fs(\fe)=0\}. 
\] 
With a scale function $\fs\in \mathrm{S}(\tI)$, one can construct a regular and strongly local Dirichlet form on $L^2(\tI,\fm|_\tI)$:
\begin{equation}\label{EQ2FSFL}
\begin{aligned}
	&\FF^{(\mathfrak{s})}:=\bigg\{f\in L^2(\mathtt{I}, \fm|_{\tI}): f\ll \fs, \frac{du}{d\fs}\in L^2(\tI,d\fs); \\
	& \qquad \qquad \qquad 	 f(\fa)=0\; (\text{resp. } f(\fb)=0) \text{ whenever }(\text{L}) \; (\text{resp. (R)})\bigg\}, \\
	&\EE^{(\fs)}(f,g):=\frac{1}{2}\int_{\tI}\frac{df}{d\fs}\frac{df}{d\fs}d\fs,\quad f,g\in \FF^{(\fs)},
\end{aligned}\end{equation}
which is associated with an $\fm|_{\tI_k}$-symmetic irreducible diffusion $X^{(\fs)}$ on $\tI$ with the scale function $\fs$ (see \cite{FHY10, LY172}).
Here, (L) and (R) stand for the conditions at the left and right endpoints of $\tI$:
\begin{itemize}
\item[(L)] $\fa=-\infty$ and $\lambda_\fs((-\infty, \mathfrak{e}))+\fm((-\infty, \mathfrak{e}))<\infty$;
\item[(R)]  $\fb=\infty$ and $\lambda_\fs((\mathfrak{e},\infty))+\fm((\mathfrak{e},\infty))<\infty$.
\end{itemize}
Moreover, we say $\fs\in \mathrm{S}(\tI)$ is adapted (to $\tI)$ if $\fa\in \tI$ (resp. $\fb\in \tI$) is equivalent to $\fa+\fs(\fa)>-\infty$ (resp. $\fb+\fs(\fb)<\infty$). The family of all adapted scale functions on $\tI$ is denoted by $\mathrm{S}_\infty(\tI)$. 

The following theorem is taken from \cite[\S2.4]{LY172}. In fact, it gives  the irreducible decomposition of a symmetric diffusion on $\bR$. 

\begin{theorem}\label{THM0}
Let $\fm$ be a fully supported positive Radon measure on $\mathbb{R}$. Then $(\EE, \FF)$ is a regular and strongly local Dirichlet form on $L^2(\mathbb{R},m)$ if and only if there exists a set of at most countable disjoint intervals $\{\tI_k=\langle \fa_k,\fb_k\rangle: \tI_k\subset \mathbb{R}, k\geq 1\}$ and a scale function $\fs_k\in \mathrm{S}_\infty(\tI_k)$ for each $k\geq 1$ such that
\begin{equation}\label{EQ2FULR}
\begin{aligned}
	&\FF=\left\{f\in L^2(\mathbb{R}, \fm): f|_{\tI_k}\in \FF^{(\fs_k)}, \sum_{k\geq 1}\EE^{(\fs_k)}(f|_{\tI_k}, f|_{\tI_k})<\infty  \right\},  \\
	&\EE(f,g)=\sum_{k\geq 1}\EE^{(\fs_k)}(f|_{\tI_k}, g|_{\tI_k}),\quad f,g \in \FF,
\end{aligned}
\end{equation}
where for each $k\geq 1$, $(\EE^{(\fs_k)}, \FF^{(\fs_k)})$ is given by \eqref{EQ2FSFL} with $(\tI_k,\fs_k)$ in place of $(\tI, \fs)$.  Moreover, the intervals $\{\tI_k: k\geq 1\}$ and the scale functions $\{\fs_k:k\geq 1\}$ are uniquely determined, if the difference of order is ignored.
\end{theorem}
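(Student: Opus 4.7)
My plan is to split the proof into a sufficiency direction, a necessity direction, and the uniqueness claim.

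For sufficiency, given data $\{(\tI_k,\fs_k)\}_{k\geq 1}$ with $\fs_k\in \mathrm{S}_\infty(\tI_k)$, I would verify directly that the pair in \eqref{EQ2FULR} is a regular, strongly local Dirichlet form on $L^2(\bR,\fm)$. Each summand $(\EE^{(\fs_k)},\FF^{(\fs_k)})$ on $L^2(\tI_k,\fm|_{\tI_k})$ is already known to be such a form. Closedness and the unit contraction pass to the direct sum because the $\tI_k$ are pairwise disjoint, and strong locality is preserved for the same reason. Regularity is obtained by gluing cores: for each $k$ pick a special standard core $\mathcal{C}_k\subset \FF^{(\fs_k)}$ (adaptedness of $\fs_k$ is precisely what ensures these cores see the correct behaviour at each finite endpoint contained in $\tI_k$), and assemble finite sums of such functions to approximate elements of $\FF\cap C_c(\bR)$ in the $\EE_1$-norm.

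For necessity, the main engine is the irreducible decomposition of $(\EE,\FF)$: there exists an at most countable family $\{E_k\}$ of disjoint $\EE$-invariant Borel sets, unique up to $\fm$-null modifications, such that the restriction of $(\EE,\FF)$ to each $L^2(E_k,\fm|_{E_k})$ is an irreducible, strongly local, regular Dirichlet form. The first key step is to show that each $E_k$ equals, modulo $\fm$-null sets, an interval $\tI_k=\langle \fa_k,\fb_k\rangle$. This rests on the sample-path continuity of the associated diffusion: if $E_k$ contained two $\fm$-positive pieces separated by some point $x$, then paths from one piece to the other would have to cross $x$, and a quasi-support argument would force $x\in E_k$ up to capacity, contradicting the decomposition.

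The second step is to apply the classical Feller--It\^o--McKean representation of one-dimensional irreducible symmetric diffusions to each piece: the restriction $(\EE,\FF)|_{\tI_k}$ must be of the form \eqref{EQ2FSFL} for a scale function $\fs_k$ on $\tI_k$, uniquely determined by the energy measure of the coordinate function together with the normalization $\fs_k(\fe_k)=0$. Adaptedness $\fs_k\in \mathrm{S}_\infty(\tI_k)$ is then equivalent to the correct boundary behaviour at each endpoint: a finite endpoint $\fa_k$ lies in $\tI_k$ precisely when it is accessible for $X^{(\fs_k)}$, i.e.\ $\fa_k+\fs_k(\fa_k)>-\infty$, while infinite endpoints are ruled out from $\tI_k$ by (L) and (R). Uniqueness of $\{(\tI_k,\fs_k)\}$ up to reordering then follows from the uniqueness of the irreducible invariant sets combined with the uniqueness of the scale function under the chosen normalization.

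The main obstacle I anticipate is the invariant-sets-are-intervals step in the necessity direction: translating sample-path continuity into a purely Dirichlet-form statement on $\bR$, in particular excluding Cantor-like invariant sets, is the delicate point. The cleanest route seems to be to combine the quasi-support theory for symmetric Hunt processes with the one-dimensional continuity argument sketched above, using that $\fm$ is fully supported to rule out spurious gaps. Once this obstacle is cleared, the remaining ingredients are standard consequences of the regular Dirichlet form calculus and of the classical theory of one-dimensional diffusions.
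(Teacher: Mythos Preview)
The paper does not actually prove Theorem~\ref{THM0}: it is stated in the appendix and explicitly quoted from \cite[\S2.4]{LY172}, so there is no in-paper proof against which to compare. Your outline is nonetheless the natural one and matches in spirit what the cited reference does: sufficiency by verifying the Dirichlet-form axioms for the direct sum, necessity via the irreducible decomposition of the associated symmetric diffusion, identification of each irreducible piece with an interval carrying a canonical scale, and uniqueness from the uniqueness of both ingredients.

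One technical point your sufficiency sketch glosses over: the complement $\bigl(\cup_{k\geq 1}\tI_k\bigr)^c$ may carry positive $\fm$-mass (the paper only assumes $\fm$ is fully supported), and functions in $\FF$ are unconstrained there apart from being in $L^2(\fm)$. A core must be uniformly dense in $C_c(\bR)$, not merely $\EE_1$-dense in $\FF\cap C_c(\bR)$, so your ``glue finite sums of $\mathcal{C}_k$'' argument has to be supplemented by functions living on the complement; this is where one typically needs the nowhere-denseness of $\bigl(\cup_k\mathring{\tI}_k\bigr)^c$ (cf.\ Lemma~\ref{LM32}(1) for the distorted-Brownian-motion case) together with a Tietze/Lusin-type patching. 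Apart from this, your identification of the ``invariant sets are intervals'' step as the crux is accurate, and the path-continuity plus quasi-support argument you sketch is the right mechanism.
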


Following \cite{LY172}, we call $\tI_k$ or $(\tI_k, \fs_k)$ an effective interval of $(\EE,\FF)$ or $X$ if no confusions cause. These effective intervals with the symmetric measure $\fm$ determine $(\EE,\FF)$ and $X$ completely. When restricting to $\tI_k$, $X$ is a ``regular" diffusion process with the scale function $\fs_k$, the speed measure $\fm|_{\tI_k}$ and no killing inside (see \cite[Chapter V~\S6]{RW87}). Particularly, $X$ is irreducible if and only if exactly one effective interval $\tI_1=\bR$ appears. Moreover, every point outside $\cup_{k\geq 1}\tI_k$ is a trap of $X$ in the sense that 
\[
	\mathbf{P}_x(X_t=x,\forall t\geq 0)=1
\]
for any $x\in \left(\cup_{k\geq 1}\tI_k \right)^c$. Note incidentally that $\left(\cup_{k\geq 1}\tI_k \right)^c$ is not necessarily of zero $\fm$-measure. 

\subsection{Quasi notions}\label{SEC22}

A simple lemma below presents a typical nest for the Dirichlet form in \eqref{EQ2FSFL}. 

\begin{lemma}\label{LM21}
Let $(\EE^{(\fs_k)},\FF^{(\fs_k)})$ be the Dirichlet form on $L^2(\tI_k, \fm|_{\tI_k})$ given by \eqref{EQ2FSFL} with $(\tI_k,\fs_k)$ in place of $(\tI, \fs)$ and $\{F^k_m\subset \tI_k:m\geq 1\}$ an increasing sequence of closed intervals such that $\cup_{m\geq 1}F^k_m=\tI_k$. Then $\{F^k_m: m\geq 1\}$ is an $\EE^{(\fs_k)}$-nest.
\end{lemma}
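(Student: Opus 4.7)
The plan is to invoke the capacity characterization of nests: for a regular Dirichlet form, an increasing sequence $\{F^k_m\}$ of closed sets is an $\EE^{(\fs_k)}$-nest if and only if $\lim_{m\to\infty}\mathrm{Cap}^{(\fs_k)}(K\setminus F^k_m)=0$ for every compact $K\subset \tI_k$, where $\mathrm{Cap}^{(\fs_k)}$ denotes the $1$-capacity of $(\EE^{(\fs_k)},\FF^{(\fs_k)})$; see \cite[\S2.1]{FOT11}. I would then verify the stronger geometric fact that each compact $K\subset \tI_k$ is contained in $F^k_m$ for all sufficiently large $m$, which trivially gives $\mathrm{Cap}^{(\fs_k)}(K\setminus F^k_m)=0$ eventually.

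Write $F^k_m=[a_m,b_m]$ with $a_m$ non-increasing and $b_m$ non-decreasing; such a parametrization exists because the $F^k_m$ are closed subintervals of $\tI_k$ forming an increasing family. The exhaustion hypothesis $\bigcup_{m\geq 1}F^k_m=\tI_k=\langle \fa_k,\fb_k\rangle$ then forces $a_m\downarrow \fa_k$ and $b_m\uparrow \fb_k$, with equality eventually at each endpoint belonging to $\tI_k$. Given a compact $K\subset \tI_k$, the quantities $\alpha:=\min K$ and $\beta:=\max K$ lie in $\tI_k$, and the exhaustion yields some $m_0$ with $a_{m_0}\leq \alpha$ and $b_{m_0}\geq \beta$. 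Monotonicity then gives $K\subset [\alpha,\beta]\subset F^k_m$ for every $m\geq m_0$, so that $K\setminus F^k_m=\emptyset$ and the capacity vanishes.

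The main non-trivial ingredient is the capacity characterization of nests, which I would cite from \cite{FOT11} rather than reprove; it relies on the regularity of $(\EE^{(\fs_k)},\FF^{(\fs_k)})$ and the existence of a quasi-continuous modification for each element of $\FF^{(\fs_k)}$. If one instead wanted to verify directly the defining density condition --- that $\bigcup_m \FF^{(\fs_k)}_{F^k_m,b}$ is $\EE^{(\fs_k)}_1$-dense in $\FF^{(\fs_k)}$ --- the main obstacle would be the boundary behaviour at endpoints where condition (L) or (R) imposes a Dirichlet-type vanishing on elements of $\FF^{(\fs_k)}$, and a careful $\fs_k$-weighted cutoff construction (combined with an absolute-continuity or Hardy-type estimate to control the cutoff derivative near such endpoints) would be needed to make the truncated approximants converge in the $\EE^{(\fs_k)}_1$-norm.
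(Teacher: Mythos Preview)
Your proof is correct but takes a slightly different route from the paper. The paper verifies the density definition of a nest in one line: by regularity, $\FF^{(\fs_k)}\cap C_c(\tI_k)$ is $\EE^{(\fs_k)}_1$-dense in $\FF^{(\fs_k)}$, and any $f\in \FF^{(\fs_k)}\cap C_c(\tI_k)$ has compact support in $\tI_k$, hence support in some $F^k_m$; thus $\FF^{(\fs_k)}\cap C_c(\tI_k)\subset \bigcup_{m}\FF^{(\fs_k)}_{F^k_m}$ and the latter is dense. Your argument via the capacity characterization of nests uses the same geometric observation (compact subsets of $\tI_k$ are eventually absorbed by $F^k_m$) but routes it through a cited black-box theorem rather than applying regularity directly. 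Both work; the paper's version is shorter and avoids the extra citation.

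Your third paragraph overestimates the difficulty of the direct density approach. No $\fs_k$-weighted cutoff construction or Hardy-type estimate near (L)/(R) endpoints is needed: regularity already supplies a dense core of compactly supported functions, and such functions automatically vanish near any endpoint not in $\tI_k$, while closed endpoints of $\tI_k$ are eventually contained in $F^k_m$. The boundary issues you anticipate simply do not arise once one works with $C_c(\tI_k)$.
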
 
\begin{proof}
Note that $\FF^{(\fs_k)}\cap C_c(\tI_k)\subset \cup_{m\geq 1}\FF^{(\fs_k)}_{F^k_m}$. Thus we obtain the conclusion by the regularity of $(\EE^{(\fs_k)},\FF^{(\fs_k)})$. 
\end{proof}

In what follows, we shall characterize the quasi notions of $(\EE,\FF)$. These characterizations are valid not only for $(\EE,\FF)$ but also for the local Dirichlet forms appeared in \cite{LY172}.  
Since it may be of independent interest, we conclude it as a theorem.

\begin{theorem}\label{THM2}
Let $\fm$ and $(\EE,\FF)$ with the effective intervals $\{(\tI_k, \fs_k): k\geq 1\}$ be in Theorem~\ref{THM0}. Denote the $1$-capacity of $(\EE,\FF)$ by $\text{Cap}$. Then the following hold:
\begin{itemize}
\item[(1)] Let $\{F_m: m\geq 1\}$ be an $\EE$-nest. Then for any $k$, 
\[
	\{F^k_m:=F_m\cap \tI_k: m\geq 1\}
\]
is an $\EE^{(\fs_k)}$-nest. 
\item[(2)] Let $\{F^k_m\subset \tI_k:m\geq 1\}$ be an $\EE^{(\fs_k)}$-nest such that $F^k_m$ is closed in $\mathbb{R}$ for each $k\geq 1$ (such as that in Lemma~\ref{LM21}). Take another increasing sequence of closed sets $\{F^0_m:m\geq 1\}$ such that 
\[
	\fm\left(\left(\cup_{k\geq 1}\tI_k\right)^c\setminus (\cup_{m\geq 1}F^0_m)\right)=0.
\]
Write
\[
F_m:=\left(\cup_{k=1}^m F^k_m\right) \bigcup F^0_m.
\]
Then $\{F_m:m\geq 1\}$ is an $\EE$-nest. 
\item[(3)] Let $A\subset \mathbb{R}$. Then $A$ is $\EE$-polar, if and only if $A$ is contained in an $\fm$-negligible Borel subset of $(\cup_{k\geq 1}\tI_k)^c$. Particularly, every singleton of $\cup_{k\geq 1}\tI_k$ is of positive capacity. 
\item[(4)]  For any $k$ and any compact $K\subset \tI_k$, it holds that
\begin{equation}\label{EQ2XKX}
\inf_{x\in K} \text{Cap}(\{x\})>0.
\end{equation}
Particularly, if $\{F_m:m\geq 1\}$ is an $\EE$-nest, then for some $M\in \mathbb{N}$, 
\begin{equation}\label{EQ2KFM}
	K\subset F_m
\end{equation}
for all $m>M$. 
\item[(5)] Let $f$ be a measurable function on $\mathbb{R}$. Then $f$ is $\EE$-quasi-continuous, if and only $f|_{\tI_k}$ is continuous on $\tI_k$. 
\item[(6)] Let $\mu$ be a $\sigma$-finite positive measure on $\mathbb{R}$. Then $\mu$ is a smooth measure relative to $(\EE,\FF)$, if and only if $\mu\ll \fm$ on $\left(\cup_{k\geq 1}\tI_k\right)^c$ and $\mu|_{\tI_k}$ is a Radon measure on $\tI_k$ for any $k\geq 1$. 
\end{itemize}
\end{theorem}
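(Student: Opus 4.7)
The proof will proceed by exploiting the direct-sum structure \eqref{EQ2FULR}, tackling the six assertions in an order that lets later parts depend on earlier ones; in particular, I will treat (4) before (3) and (5) because its capacity lower bound is the workhorse for eliminating polar singletons inside $\cup_k\tI_k$.

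For (1), given an $\EE$-nest $\{F_m\}$ and any $g\in\FF^{(\fs_k)}$, the zero-extension $\tilde g$ off $\tI_k$ lies in $\FF$ with $\EE(\tilde g,\tilde g)=\EE^{(\fs_k)}(g,g)$ by \eqref{EQ2FULR}. Approximating $\tilde g$ in $\EE_1$-norm by elements of $\FF_{F_m}$ and restricting back to $\tI_k$ yields approximations in $\FF^{(\fs_k)}_{F_m\cap\tI_k}$, so $\cup_m\FF^{(\fs_k)}_{F_m\cap\tI_k}$ is $\EE^{(\fs_k)}_1$-dense. For (2), I will first truncate $f\in\FF$ to $f_N:=\sum_{k=1}^N f|_{\tI_k}\cdot 1_{\tI_k}$, which converges to $f$ in $\EE_1$-norm by the summability in \eqref{EQ2FULR}, then approximate each piece $f|_{\tI_k}$ by some $g^k\in\FF^{(\fs_k)}_{F^k_{m_k}}$; summing these and choosing $m\geq\max(N,m_1,\dots,m_N)$ produces the desired approximation in $\FF_{F_m}$.

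The core technical step is (4); this is the main obstacle. The plan is to establish a one-dimensional Sobolev embedding on each $\tI_k$: for any compact $K\subset\tI_k$ there exists a constant $C_K$ such that
\begin{equation*}
  |f(x)|^2\leq C_K\,\EE^{(\fs_k)}_1(f,f),\qquad\forall f\in\FF^{(\fs_k)},\ x\in K.
\end{equation*}
I will obtain this by writing $f(x)-f(y)=\int_y^x(df/d\fs_k)\,d\fs_k$, applying Cauchy--Schwarz to bound $|f(x)-f(y)|^2$ by $2|\fs_k(x)-\fs_k(y)|\cdot\EE^{(\fs_k)}(f,f)$, and using the mean-value inequality on any small interval of positive $\fm$-measure to control $|f(y)|^2$ by $\EE^{(\fs_k)}_1(f,f)$. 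Applied to the 1-equilibrium potential $e_{\{x\}}$ (which equals $1$ at $x$ and has $\EE_1$-energy $\mathrm{Cap}(\{x\})$), this yields $\mathrm{Cap}(\{x\})\geq 1/C_K$ uniformly for $x\in K$, giving \eqref{EQ2XKX}. The consequence \eqref{EQ2KFM} is then immediate: if $K\setminus F_m\neq\emptyset$, pick $x_m\in K\setminus F_m$ to get $\mathrm{Cap}(K\setminus F_m)\geq\mathrm{Cap}(\{x_m\})\geq\inf_K\mathrm{Cap}(\{x\})>0$, contradicting $\mathrm{Cap}(K\setminus F_m)\to 0$.

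Assertion (3) then falls out in both directions: polar sets are $\fm$-null in general, and (4) forces them to avoid $\cup_k\tI_k$; conversely, given an $\fm$-null Borel $A\subset(\cup_k\tI_k)^c$, part (2) applied with $\EE^{(\fs_k)}$-nests from Lemma~\ref{LM21} and closed $F^0_m\subset(\cup_k\tI_k)^c\setminus A$ exhausting the complement up to an $\fm$-null set produces an $\EE$-nest avoiding $A$. For (5), one direction uses \eqref{EQ2KFM} so that any compact $K\subset\tI_k$ lies in some $F_m$, forcing $f|_{\tI_k}$ to be continuous on $\tI_k$; the converse combines Lusin's theorem on $(\cup_k\tI_k)^c$ with respect to $\fm$ with the construction (2) to assemble a nest on which $f$ is continuous, modifying $f$ on an $\fm$-null (hence polar) set. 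Finally for (6), recall $\mu$ is smooth iff it charges no polar set and admits an $\EE$-nest of $\mu$-finite sets. By (3), the first condition rewrites as $\mu\ll\fm$ on $(\cup_k\tI_k)^c$; for the second, $\mu|_{\tI_k}$ being Radon lets us take $F^k_m$ as a compact exhaustion of $\tI_k$, while $\sigma$-finiteness of $\mu$ on $(\cup_k\tI_k)^c$ provides closed $F^0_m$ with $\mu(F^0_m)<\infty$ exhausting it up to an $\fm$-null set, and (2) assembles these into an $\EE$-nest; the converse uses \eqref{EQ2KFM} to bound $\mu(K)\leq\mu(F_m)<\infty$ for any compact $K\subset\tI_k$.
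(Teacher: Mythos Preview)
Your proposal is correct and follows essentially the same architecture as the paper's proof: the direct-sum structure \eqref{EQ2FULR} drives (1) and (2), Lusin plus (2) handles (5) and (6), and the capacity lower bound on compacts in $\tI_k$ underpins (3) and the nest-swallowing property \eqref{EQ2KFM}. The one substantive difference is in (4): the paper simply cites \cite[(2.2.40)]{CF12} twice for the uniform positivity $\inf_{x\in K}\mathrm{Cap}(\{x\})>0$, whereas you prove it from scratch via the one-dimensional Sobolev embedding $|f(x)|^2\leq C_K\,\EE^{(\fs_k)}_1(f,f)$. Your route is more self-contained and makes transparent why the one-dimensional setting is special; the paper's route is shorter but defers the content to the reference. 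A minor remark: in (5) your parenthetical about ``modifying $f$ on an $\fm$-null set'' is unnecessary, since the Lusin construction already yields closed $F^0_m$ whose union misses only an $\fm$-null set, so the assembled nest works for $f$ as given; and in (6) be sure, as the paper does, to run the inner-regularity argument for $\mu+\fm$ rather than $\mu$ alone, so that the resulting $F^0_m$ simultaneously have finite $\mu$-mass and exhaust $(\cup_k\tI_k)^c$ up to an $\fm$-null set.
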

\begin{proof}
\begin{itemize}
\item[(1)] Note that $\FF^{(\fs_k)}_{F^k_m}=\{f|_{\tI_k}: f\in \FF_{F_m}\}$ and $\FF^{(\fs_k)}=\{f|_{\tI_k}: f\in \FF\}$. Since $\cup_{m\geq 1}\FF_{F_m}$ is $\EE_1$-dense in $\FF$, we can deduce that $\cup_{m\geq 1}\FF^{(\fs_k)}_{F^k_m}$ is $\EE^{(\fs_k)}_1$-dense in $\FF^{(\fs_k)}$. This means $\{F^k_m:m\geq 1\}$ is an $\EE^{(\fs_k)}$-nest. 
\item[(2)] It suffices to show $\cup_{m\geq 1}\FF_{F_m}$ is $\EE_1$-dense in $\FF$. To this end, take $f\in \FF$ and fix an arbitrary small constant $\varepsilon>0$. Set $f_k:=f|_{\tI_k}$.  Since 
\[
	\sum_{k\geq 1}\EE_1^{(\fs_k)}(f_k,f_k)\leq \EE_1(f,f)<\infty,
\]
there exists an integer $K$ such that
\[
\sum_{k> K}\EE^{(\fs_k)}_1(f_k,f_k)<\varepsilon/3. 
\] 
For each $1\leq k\leq K$, it follows from $f_k\in \FF^{(\fs_k)}$ that for some integer $M_k$, there exists a function $g_k\in \FF^{(\fs_k)}_{F^k_{M_k}}$ such that
\[
	\EE_1^{(\fs_k)}(f_k-g_k, f_k-g_k)< \frac{\varepsilon}{3K}.
\]
On the other hand, some integer $M_0$ also exists such that
\[
	\int_{\left(\cup_{k\geq 1}\tI_k\right)^c\setminus F^0_{M_0}}f(x)^2\fm(dx)<\varepsilon/3. 
\]
Set $M:=M_0\vee M_1\vee \cdots \vee M_K \vee K$ and define a function $g$ by letting 
\[
\begin{aligned}
	&g:=g_k \text{ on }F^k_{M_k},\quad 1\leq k\leq K, \\
	&g:=f \text{ on }F^0_{M_0} \cap \left(\cup_{k\geq 1}\tI_k\right)^c,
\end{aligned}
\]
and otherwise $g:=0$. Clearly, for any $1\leq k\leq K$, $g|_{\tI_k}=g_k\in \FF^{(\fs_k)}_{F^k_{M_k}}\subset \FF^{(\fs_k)}_{F^k_M}$ since $M_k\leq M$. Thus $g\in \FF_{F_M}$ and
\[
\begin{aligned}
\EE_1&(g-f,g-f)\\ &\leq \sum_{1\leq k\leq K}\EE_1^{(\fs_k)}(f_k-g_k, f_k-g_k)+ \int_{\left(\cup_{k\geq 1}\tI_k\right)^c\setminus F^0_{M_0}}f(x)^2\fm(dx) \\ &\qquad + \sum_{k> K}\EE^{(\fs_k)}_1(f_k,f_k) \\
&<\varepsilon. 
\end{aligned}\]
Therefore, $\{F_m:m\geq 1\}$ is an $\EE$-nest. 
\item[(3)] From \cite[(2.2.40]{CF12}, we can conclude that every singleton of $\cup_{k\geq 1}\tI_k$ is of positive capacity. Thus every $\EE$-polar set is contained in an $\fm$-negligible Borel subset of $(\cup_{k\geq 1}\tI_k)^c$ by \cite[Theorem~4.1.1]{FOT11}. To the contrary, let $A\subset N$, where $N$ is a Borel subset of $(\cup_{k\geq 1}\tI_k)^c$ and $\fm(N)=0$. It suffices to show $N$ is $\EE$-polar. Note that $\text{Cap}$ is a Choquet capacity and any Borel set is capacitable. Hence without loss of generality, we may assume $N$ is compact. Then $N^c$ is open and can be written as a union of disjoint open intervals:
\begin{equation}\label{EQ2NCP}
	N^c=\cup_{p\geq 1}(c_p, d_p). 
\end{equation}
For each $k\geq 1$, take an $\EE^{(\fs_k)}$-nest $\{F^k_m: m\geq 1\}$ as in Lemma~\ref{LM21}. Further set
\begin{equation}\label{EQ2FMP}
F^0_m:=\cup_{p=1}^m [c_p+1/m, d_p-1/m].
\end{equation}
Then we have $F^0_m$ is increasing in $m$ and $\cup_{m\geq 1}F^0_m=N^c$. This indicates 
\[
	\fm\left((\cup_{k\geq 1}\tI_k)^c\setminus (\cup_{m\geq 1}F^0_m) \right)=\fm\left((\cup_{k\geq 1}\tI_k)^c\setminus N^c \right)=0.
\]
Let $F_m:=\left(\cup_{k=1}^m F^k_m\right) \bigcup F^0_m$. From the previous assertion, we know that $\{F_m:m\geq 1\}$ is an $\EE$-nest. We complete the proof with showing that $N\subset (\cup_{m\geq 1}F_m)^c$. Indeed,
\[
	 (\cup_{m\geq 1}F_m)^c= \cap_{m\geq 1} \left(\cap_{k=1}^m (F^k_m)^c \bigcap (F^0_m)^c \right). 
\]
For any $m\geq 1$ and $1\leq k\leq m$, since $F^k_m\subset \tI_k$, it follows that $N\subset (\cup_{k\geq 1}\tI_k)^c\subset (F^k_m)^c$. Moreover, $N\subset (F^0_m)^c$ is clear by \eqref{EQ2NCP} and \eqref{EQ2FMP}. Therefore, $N$ is $\EE$-polar. 
\item[(4)] Note that \eqref{EQ2XKX} is also indicated by \cite[(2.2.40)]{CF12}. Suppose \eqref{EQ2KFM} does not hold. Then $F_m^c\cap K\neq \emptyset$ for any $m\geq 1$. Thus 
\[
	\text{Cap}(K\setminus F_m)\geq   \inf_{x\in K} \text{Cap}(\{x\})>0,
\]
which contradicts with \cite[Theorem~1.3.14]{CF12}. 
\item[(5)] The necessity is obvious by \eqref{EQ2KFM}. For the sufficiency, let $f$ be such a function. For each $k\geq 1$, take an $\EE^{(\fs_k)}$-nest $\{F^k_m: m\geq 1\}$ as in Lemma~\ref{LM21}. Clearly, $f|_{F^k_m}$ is continuous. 
On the other hand, we assert that there exists an increasing sequence of closed sets $\{F^0_m: m\geq 1\}$ such that 
\[
	\fm\left((\cup_{m\geq 1}F^0_m)^c \right)=0
\]
and $f|_{F^0_m}$ is continuous on $F^0_m$ for any $m\geq 1$. In fact, for any $p\in \mathbb{Z}$, by Lusin's Theorem, we can take an increasing sequence of compact sets $\{K^p_m\subset [p,p+1]: m\geq 1\}$ such that $\fm([p,p+1]\setminus K^p_m)<1/m$ and $f|_{K^p_m}$ is continuous. Define
\[
	F^0_m:=\cup_{|p|\leq m}K^p_m,\quad m\geq 1. 
\]
Clearly, $\{F^0_m:m\geq 1\}$ is an increasing sequence of closed sets and $f|_{F^0_m}$ is continuous. Moreover, 
\[
	\cup_{m\geq 1} F^0_m=\cup_{m\geq 1}\cup_{|p|\leq m}K^p_m=\cup_{p\in \mathbb{Z}} \cup_{m\geq |p|}K^p_m
\]
and 
\[
W_p:=\cup_{m\geq |p|}K^p_m=\cup_{m\geq 1}K^p_m\subset [p,p+1]. 
\]
Hence 
\[
	\left(\cup_{m\geq 1} F^0_m\right)^c=(\cup_{p\in \bZ}[p,p+1])\setminus (\cup_{p\in \bZ}W_p)\subset \cup_{p\in \mathbb{Z}} ([p,p+1]\setminus W_p).
\]
This leads to 
\begin{equation}\label{EQ2MMFM}
\fm\left((\cup_{m\geq 1}F^0_m)^c \right)\leq \sum_{p\in \bZ} \fm([p,p+1]\setminus W_p).
\end{equation}
However, $\fm([p,p+1]\setminus W_p)\leq \inf_{m\geq 1}\fm\left( [p,p+1]\setminus K^p_m\right)=0$. We then obtain $\fm\left((\cup_{m\geq 1}F^0_m)^c \right)=0$.
Finally, set
\[
F_m=\left(\cup_{k=1}^m F^k_m\right) \bigcup F^0_m.
\]
By the second assertion, $\{F_m:m\geq 1\}$ is an $\EE$-nest. Since $f|_{F^k_m}$ and $f|_{F^0_m}$ are continuous, we can conclude that $f|_{F_m}$ is continuous. Therefore, $f$ is $\EE$-quasi-continuous. 
\item[(6)] The necessity is obvious by the third and fourth assertions. To prove the sufficiency, we need only to show there exists an $\EE$-nest $\{F_m:m\geq 1\}$ such that $\mu(F_m)<\infty$ for any $m\geq 1$. To this end, we take an $\EE^{(\fs_k)}$-nest $\{F^k_m:m\geq 1\}$ as in Lemma~\ref{LM21}. Since $F^k_m\subset \tI_k$ is a closed interval and $\mu|_{\tI_k}$ is Radon on $\tI_k$, we have $\mu(F^k_m)<\infty$. On the other hand, we assert that there exists an increasing sequence of closed sets $\{F^0_m: m\geq 1\}$ such that 
\begin{equation}\label{EQ2MMF}
	\fm\left((\cup_{m\geq 1}F^0_m)^c\right)=0, \text{ and } \mu(F^0_m)<\infty,\; \forall m\geq 1. 
\end{equation}
Indeed, denote $\nu:=\mu+\fm$. Since $\nu$ is $\sigma$-finite, we can take a sequence of sets $\{A_p: p\geq 1\}$ such that
\[
	\mathbb{R}=\cup_{p\geq 1}A_p, \quad \nu(A_p)<\infty, \quad p\geq 1. 
\]
Write the restriction of $\nu$ to $A_p$ by $\nu_p$, i.e. $\nu_p(\cdot):=\nu(\cdot \cap A_p)$. Then $\nu_p$ is a finite measure on $\mathbb{R}$ and hence Radon on $\bR$. It follows that there exists an increasing sequence of closed sets $\{K^p_m\subset A_p: m\geq 1\}$ such that $\nu(A_p\setminus K^p_m)=\nu_p(A_p\setminus K^p_m)<1/m$. Set
\[
	F^0_m:=\cup_{p=1}^m K^p_m. 
\]
Clearly, $\{F^0_m:m\geq 1\}$ is an increasing sequence of closed sets and
\[
	\mu(F^0_m)\leq \sum_{p=1}^m \mu(K^p_m)\leq \sum_{p=1}^m \nu(A_p)<\infty. 
\] 
Moreover, 
\[
	\cup_{m\geq 1}F^0_m=\cup_{m\geq 1}\cup_{p=1}^m K^p_m=\cup_{p\geq 1}\cup_{m\geq p} K^p_m=\cup_{p\geq 1}W_p,
\]
where $W_p:=\cup_{m\geq p}K^p_m$. Mimicking \eqref{EQ2MMFM}, we can obtain
\[
\fm\left((\cup_{m\geq 1}F^0_m)^c\right)=\sum_{p\geq 1}\fm\left(A_p\setminus W_p \right)\leq \sum_{p\geq 1}\nu(A_p\setminus W_p)=0.
\]
This leads to \eqref{EQ2MMF}. Finally, let
\[
F_m=\left(\cup_{k=1}^m F^k_m\right) \bigcup F^0_m.
\]
Then $\{F_m:m\geq 1\}$ is an $\EE$-nest and $\mu(F_m)<\infty$. Therefore, $\mu$ is a smooth measure. 
\end{itemize} 
That completes the proof.
\end{proof}

We present a corollary to characterize the quasi notions of $(\EE,\FF)$ for the situation where $\left(\cup_{k\geq 1}\tI_k\right)^c$ is of zero $\fm$-measure. This is satisfied in \eqref{EQ2DEC}, since there we have
\[
\fm\left((\cup_{k\geq 1}\tI_k)^c \right)= \int_{(\cup_{k\geq 1}\tI_k)^c}\rho(x)dx=\int_{S(\rho)}\rho(x)dx=0. 
\]

\begin{corollary}\label{COR1}
Let $(\EE,\FF)$ be the Dirichlet form in Theorem~\ref{THM2} and assume further $\fm\left((\cup_{k\geq 1}\tI_k)^c \right)=0$. Then the following hold:
\begin{itemize}
\item[(1)] Let $\{F^k_m\subset \tI_k:m\geq 1\}$ be an $\EE^{(\fs_k)}$-nest such that $F^k_m$ is closed in $\mathbb{R}$ for each $k\geq 1$ and set $F_m:=\cup_{k=1}^mF^k_m$. Then $\{F_m:m\geq 1\}$ is an $\EE$-nest. 
\item[(2)] $\left(\cup_{k\geq 1}\tI_k\right)^c$ is $\EE$-polar and every $\EE$-polar set is a subset of $\left(\cup_{k\geq 1}\tI_k\right)^c$. 
\item[(3)] Let $\mu$ be a positive measure on $\mathbb{R}$. Then $\mu$ is smooth relative to $(\EE,\FF)$, if and only if $\mu(\left(\cup_{k\geq 1}\tI_k\right)^c)=0$ and $\mu|_{\tI_k}$ is Radon on $\tI_k$ for any $k\geq 1$. 
\end{itemize}
\end{corollary}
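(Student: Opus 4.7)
The plan is to derive each of the three assertions as an immediate specialization of the corresponding statement in Theorem~\ref{THM2}, using the hypothesis $\fm((\cup_{k\geq 1}\tI_k)^c)=0$ to collapse the auxiliary ``outside'' sequence $\{F^0_m\}$ and to replace absolute continuity with respect to $\fm$ on this complement by mere vanishing of the measure. In short, there is no new analytic content beyond Theorem~\ref{THM2}; the work is a bookkeeping check that the hypotheses of the parent theorem are met in a trivial way.

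For assertion (1), I would apply Theorem~\ref{THM2}(2) with the choice $F^0_m:=\emptyset$ for every $m\geq 1$. The condition $\fm\bigl((\cup_{k\geq 1}\tI_k)^c\setminus \cup_{m\geq 1}F^0_m\bigr)=0$ required there is automatic under our standing hypothesis. The nest constructed in Theorem~\ref{THM2}(2) then reduces to $F_m=\cup_{k=1}^mF^k_m$, which is exactly the claim. Note that each $F^k_m$ is assumed to be closed in $\mathbb{R}$, so $F_m$ is closed in $\mathbb{R}$ as well, which is needed for the notion of $\EE$-nest.

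For assertion (2), I would invoke Theorem~\ref{THM2}(3), which identifies the $\EE$-polar sets as the subsets of $\fm$-negligible Borel subsets of $(\cup_{k\geq 1}\tI_k)^c$. Under the present hypothesis, $(\cup_{k\geq 1}\tI_k)^c$ is itself a Borel set of $\fm$-measure zero, hence $\EE$-polar; and conversely every $\EE$-polar set lies inside some $\fm$-negligible Borel subset of $(\cup_{k\geq 1}\tI_k)^c$, in particular inside $(\cup_{k\geq 1}\tI_k)^c$ itself. Both halves of (2) follow at once.

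For assertion (3), I would appeal to Theorem~\ref{THM2}(6). That theorem characterizes smoothness by two conditions: $\mu\ll \fm$ on $(\cup_{k\geq 1}\tI_k)^c$, and $\mu|_{\tI_k}$ is Radon on $\tI_k$ for every $k\geq 1$. Since $\fm$ vanishes on $(\cup_{k\geq 1}\tI_k)^c$ in the current setting, the absolute continuity condition degenerates to $\mu((\cup_{k\geq 1}\tI_k)^c)=0$, giving precisely the stated criterion. Because each step is a direct reading of Theorem~\ref{THM2}, there is no substantive obstacle; the only point requiring minor care is ensuring the closedness in $\mathbb{R}$ of the nest members in (1), which is handled by hypothesis.
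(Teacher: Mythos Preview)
Your proposal is correct and matches the paper's intent: the corollary is stated without proof precisely because it is an immediate specialization of Theorem~\ref{THM2}, and your argument---taking $F^0_m=\emptyset$ in part~(2), reading off part~(3), and collapsing the absolute continuity in part~(6) to vanishing---is exactly the expected derivation. The only minor point worth noting is that Theorem~\ref{THM2}(6) is phrased for $\sigma$-finite $\mu$, but this is harmless here since smoothness forces $\sigma$-finiteness and, conversely, the right-hand conditions in (3) already imply it.
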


\subsection{Conservativeness}\label{SEC23}

We present a condition:
\begin{itemize}
\item[(C)] $X$ is conservative. In other words, 
\[
	\mathbf{P}_x(\zeta=\infty)=1
\]
for any $x\in \bR$, where $\zeta$ is the lifetime of $X$.
\end{itemize}
The following result concludes a characterization of the conservativeness.

\begin{proposition}\label{THM4}
Let $(\EE,\FF)$ and $X$ be in Theorem~\ref{THM0}.
Then (C) does not hold, if and only if either of the following holds: 
\begin{itemize}
\item[(1)] $\fa_k=-\infty$ for some $k$, i.e. $\tI_k=(-\infty, \fb_k\rangle$, and 
\begin{equation}\label{EQ4EKM}
	\int_{-\infty}^{\fe_k}\fm\left( (x,\fe_k)\right)\lambda_{\fs_k}(dx)<\infty;
\end{equation}
\item[(2)] $\fb_k=\infty$ for some $k$, i.e. $\tI_k=\langle\fa_k, \infty)$, and 
\begin{equation}\label{EQ4EKME}
	\int^{\infty}_{\fe_k}\fm\left( (\fe_k,x)\right)\lambda_{\fs_k}(dx)<\infty.
\end{equation}
\end{itemize}
Particularly, if all intervals in $\{\tI_k:k\geq 1\}$ are bounded, then $X$ is conservative. 
\end{proposition}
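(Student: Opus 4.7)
The plan is to reduce the conservativeness of $X$ to that of each irreducible restriction $X|_{\tI_k}$, and then to invoke Feller's explosion test on each effective interval $(\tI_k,\fs_k)$.

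First I would use Theorem~\ref{THM0} to decouple the problem: every point outside $\cup_{k\ge 1}\tI_k$ is a trap (so conservative trivially), while for $x\in\tI_k$ the process $X$ is supported on the invariant set $\tI_k$ and there coincides with the irreducible diffusion $X^{(\fs_k)}$ generated by $(\EE^{(\fs_k)},\FF^{(\fs_k)})$. Consequently, $X$ is conservative if and only if every $X^{(\fs_k)}$ is conservative.

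Next I would analyse the boundary behaviour of the regular one-dimensional diffusion $X^{(\fs_k)}$ with scale function $\fs_k$ and speed measure $\fm|_{\tI_k}$. When a finite endpoint belongs to $\tI_k$, the description \eqref{EQ2FSFL} imposes no Dirichlet condition there, so the endpoint is reflecting and carries no killing. When a finite endpoint is absent from $\tI_k$, the adaptedness $\fs_k\in\mathrm{S}_\infty(\tI_k)$ forces $|\fs_k|$ to diverge at that endpoint, turning it into a natural boundary which cannot be reached in finite time. Thus explosion of $X^{(\fs_k)}$ can occur only at an infinite endpoint of $\tI_k$, and in particular bounded $\tI_k$ are automatically conservative, which gives the last sentence of the proposition.

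Finally, at an infinite endpoint $\fb_k=\infty$, Feller's test asserts that $X^{(\fs_k)}$ reaches $\infty$ with positive probability in finite time from some (equivalently every) interior point if and only if
\[
\int_{\fe_k}^{\infty}\bigl(\fs_k(\infty)-\fs_k(x)\bigr)\,\fm(dx)<\infty,
\]
which by Fubini is exactly the condition \eqref{EQ4EKME}; the symmetric case at $\fa_k=-\infty$ yields \eqref{EQ4EKM}. Combining these three steps produces the equivalence. The main obstacle is to justify Feller's test inside the Dirichlet-form framework used in this paper: my plan is either to cite the classical formulation for regular diffusions (after identifying the scale/speed pair $(\fs_k,\fm|_{\tI_k})$ via Theorem~\ref{THM0}) or, should a self-contained derivation be preferred, to time-change a standard Brownian motion on $\fs_k(\tI_k)$ by the PCAF associated with the image speed measure and reduce to the standard explosion criterion for Brownian motion stopped at the endpoints. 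A secondary technical point is to invoke irreducibility to pass from "conservative for some $x\in\tI_k$" to "conservative for every $x\in\tI_k$", which follows from the fact that every point of $\tI_k$ has positive capacity (Theorem~\ref{THM2}) and any two interior points communicate.
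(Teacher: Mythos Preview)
Your proposal is correct and follows essentially the same strategy as the paper: reduce to the irreducible pieces $X|_{\tI_k}$, observe that bounded effective intervals are conservative, and apply the one-dimensional explosion criterion at infinite endpoints. The paper compresses the last two steps into citations of \cite[Theorem~2.2.11]{CF12} (recurrence of $X|_{\tI_k}$ for bounded $\tI_k$) and \cite[(3.5.13)]{CF12} (the integral criterion \eqref{EQ4EKM}/\eqref{EQ4EKME}), whereas you spell out the boundary classification (reflecting at closed finite endpoints, inaccessible at open finite endpoints by adaptedness of $\fs_k$) and state Feller's test in the Fubini-equivalent form $\int_{\fe_k}^{\infty}(\fs_k(\infty)-\fs_k(x))\,\fm(dx)<\infty$; the content is the same.
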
 
\begin{proof}
Firstly, we note that either of these two conditions above implies that the restriction of $X$ to $\tI_k$ is not conservative by \cite[(3.5.13)]{CF12}. Thus $X$ is not conservative. On the contrary, the non-conservativeness of $X$ leads to that of the restriction of $X$ to some $\tI_k$. Since the restriction of $X$ to any bounded $\tI_k$ is recurrent (and hence conservative) by \cite[Theorem~2.2.11]{CF12}, it follows that $\fa_k= -\infty$ or $\fb_k= \infty$. Finally, we can conclude \eqref{EQ4EKM} or \eqref{EQ4EKME} by using \cite[(3.5.13)]{CF12} again. That completes the proof. 
\end{proof}


\bibliographystyle{abbrv}
\bibliography{GSBM}

\end{document}